\documentclass[twoside,11pt]{article}

%

\usepackage{jmlr2e}



\usepackage[utf8]{inputenc} 
\usepackage[T1]{fontenc}    
\usepackage{booktabs}       
\usepackage{amsfonts}       
\usepackage{nicefrac}       
\usepackage{microtype}      
\usepackage{lipsum}
\usepackage{graphicx}


\usepackage{amsmath,bm}

\usepackage{subcaption}
\captionsetup{compatibility=false}

\usepackage{float}


\usepackage{amsfonts}
\usepackage{amssymb,amsmath,latexsym,tensor}
\usepackage{mathrsfs}
\usepackage{enumerate}
\usepackage{savesym}
\usepackage{graphicx}
\usepackage{adjustbox}
\usepackage[pdftex]{color}
\usepackage[font=footnotesize,labelfont=bf]{caption}
\savesymbol{AND}
\usepackage{epstopdf}
\usepackage{algorithm}
\usepackage{algorithmic}
\newcommand{\N}{\mathbb{N}}

\newcommand{\R}{\mathbb{R}}

\DeclareMathOperator{\E}{\mathbb{E}}

\DeclareMathOperator*{\argmin}{arg\,min}
\DeclareMathOperator*{\argmax}{arg\,max}

\usepackage{mathtools}
\DeclarePairedDelimiterX{\norm}[1]{\lVert}{\rVert}{#1}
\DeclarePairedDelimiterX{\inp}[2]{\langle}{\rangle}{#1, #2}



\usepackage{cleveref}

\usepackage{array,multirow}






\ShortHeadings{Stochastic Steepest Descent with Momentum}{Morshed, Sabbir and Noor-E-Alam}
\firstpageno{1}

\begin{document}

\title{Stochastic Steepest Descent Methods for Linear Systems: Greedy Sampling \& Momentum}

\author{\name Md Sarowar Morshed \email morshed.m@northeastern.edu \\
       \addr Department of Mechanical \& Industrial Engineering\\
       Northeastern University\\
       Boston, MA, USA \\
       \AND
       \name Sabbir Ahmad \email ahmad.sab@northeastern.edu \\
       \addr Khoury College of Computer Sciences \\
       Northeastern University\\
       Boston, MA, USA \\
       \AND
       \name Md Noor-E-Alam \email mnalam@neu.edu \\
       \addr Department of Mechanical \& Industrial Engineering\\
       Northeastern University\\
       Boston, MA, USA \\}

\editor{}
\maketitle

\begin{abstract}
Recently proposed adaptive \textit{Sketch \& Project} (SP) methods connect several well-known projection methods such as \textit{Randomized Kaczmarz} (RK), \textit{Randomized Block Kaczmarz} (RBK), \textit{Motzkin Relaxation} (MR), \textit{Randomized Coordinate Descent} (RCD), \textit{Capped Coordinate Descent} (CCD) etc. into one framework for solving linear systems. In this work, we first propose a \textit{Stochastic Steepest Descent} (SSD) framework that connects SP methods with the well-known \textit{Steepest Descent} (SD) method for solving positive-definite linear system of equations. We then introduce two greedy sampling strategies in the SSD framework that allow us to obtain algorithms such as \textit{Sampling Kaczmarz Motzkin} (SKM), \textit{Sampling Block Kaczmarz} (SBK), \textit{Sampling Coordinate Descent} (SCD), etc. In doing so, we generalize the existing sampling rules into one framework and develop an efficient version of SP methods. Furthermore, we incorporated the Polyak momentum technique into the SSD method to accelerate the resulting algorithms. We provide global convergence results for both the SSD method and the momentum induced SSD method. Moreover, we prove $\mathcal{O}(\frac{1}{k})$ convergence rate for the Cesaro average of iterates generated by both methods. By varying parameters in the SSD method, we obtain classical convergence results of the SD method as well as the SP methods as special cases. We design computational experiments to demonstrate the performance of the proposed greedy sampling methods as well as the momentum methods. The proposed greedy methods significantly outperform the existing methods for a wide variety of datasets such as random test instances as well as real-world datasets (LIBSVM, sparse datasets from matrix market collection). Finally, the momentum algorithms designed in this work accelerate the algorithmic performance of the SSD methods.
\end{abstract}

\begin{keywords}
 Steppest Descent Method, Kaczmarz Method, Coordinate Descent Method, Sketch \& Project Method, Randomized Algorithms, Linear System, Heavy Ball Momentum.
\end{keywords}

\section{Introduction}
In this paper, we focus on solving the following fundamental problem:
\begin{align}
\label{jml2}
  x^* = \argmin_{x \in \R^{n}} \|x-t\|^2_G \quad \textbf{subject to} \quad  Ax = b.
\end{align}
where $G \in \R^{n \times n}$ is a symmetric positive definite matrix and $ t \in \R^{n}$ is an arbitrary vector. Problem \eqref{jml2} is prevalent and central in a wide range of quantitative areas such as \textit{Numerical Linear Algebra}, \textit{Computer Science}, \textit{Scientific Computing}, \textit{Machine Learning}, \textit{Computer Vision}, \textit{Optimization}, \textit{Signal Processing}, \textit{Financial Engineering}, etc. From a \textit{Machine Learning} perspective, problem \eqref{jml2} arises from a wide range of applications such as Gaussian processes \citep{rasmussen_williams_2008}, \textit{Least-Square Support Vector Machines} \citep{SVM:2007}, graph-based \textit{Semi-Supervised Learning} and \textit{Graph-Laplacian} problems \citep{bengio:2006}, \textit{Gaussian Markov Random Fields} \citep{GaussianMR}, etc. Approximate solutions of \textit{Linear Systems} can be of practical benefit in inexact Newton schemes \citep{kaifeng:2012,wang:2013,jacek:2013} that are gaining lots of traction in the field of large-scale optimization. Throughout the paper, we assume that problem \eqref{jml2} is consistent (there exists $x^*$ such that $Ax^* = b$), dimensions are large and $m \gg n$. In a large-scale setting, solving problem \eqref{jml2} with direct methods are infeasible. For instance, Krylov-type method such as \textit{Conjugate Gradient} \citep{Hestenes:1952} is the state-of-the-art method for solving \eqref{jml2} whenever memory usage/full matrix-vector products aren't of practical concern. However, recent works suggest that Kaczmarz/row-action type methods are favorable in that context as they don't require to store large data matrix $A$, and do not calculate the product of large-scale matrices and vectors \citep{strohmer:2008,lewis:2010}.  


In the advent of big data, projection-based iterative methods are gaining popularity in the research community. Most projection based iterative methods for solving problem \eqref{jml2}, which can be interpreted under one big umbrella of \textit{Sketch \& Project} (SP) methods \citep{gower:2015}. In their work, Gower \textit{et. al} showed that iterative methods such as \textit{Randomized Newton}, \textit{Randomized Kaczmarz}, \textit{Randomized Coordinate Descent}, \textit{Random Gaussian Pursuit} and \textit{Randomized Block Kaczmarz}, etc. can be recovered as special cases of the SP method. The original SP framework was proposed for linear systems, however, recently in has been incorporated in several areas such as \textit{Quasi-Newton} methods \citep{gower:2017,NIPS:2018}, \textit{Matrix Pseudo-inverse} \citep{gower2016linearly}, \textit{Convex Feasibility} \citep{necoara:2019}, \textit{Randomized Subspace Newton} \citep{gower2019rsn}, \textit{Variance Reduction} \citep{sega:2018,Gower2020}, \textit{Newton-Raphson} \citep{yuan2020sketched}, \textit{Linear Feasibility} \citep{morshed:sketching}, etc. Several accelerated/momentum variants of the SP methods have been proposed in various contexts \citep{loizou:2017,NIPS:2018,richtrik2017stochastic}.

The performance of the SP methods depend highly on the selection of random sketching matrix at each iteration. In the broader sense of projection-based iterative methods, some important selection strategies are \textit{Uniform Sampling} \citep{strohmer:2008, lewis:2010,wright:2016}, \textit{Maximum Distance Sampling} \citep{motzkin,nutini:2016}, \textit{Kaczmarz Motzkin Sampling} \citep{haddock:2017, haddock:2019, Morshed2019, morshed2020generalization,morshed:momentum,morshed:sketching}, Capped Sampling \citep{bai:2018,gower2019adaptive}. From the perspective of coordinate descent method, some well-known sampling strategies \footnote{These selection rules are frequently used in the context of convex minimization.} are adaptive selection \citep{nesterov:2012,jaggi:2017,khalil:2018}, Gauss-Southwell rule \citep{nutini:2015,tseng:1990}, selection based on duality gap \citep{Csiba:2015}. In a recent work, Gower \textit{et. al} discussed some of the above mentioned selection rules in the SP methods \citep{gower2019adaptive}.

Now, to show the connection between SP methods with other algorithmic developments, we will discuss some of the classical and modern  Kaczmarz methods for solving problem \eqref{jml2}.Kaczmarz method \citep{kaczmarz:1937} is the oldest and one of the simplest projection based iterative methods for solving linear systems. Although the original Kaczmarz method was deterministic, in recent times, randomized Kaczmarz type methods \citep{strohmer:2008} are gaining a lot of attention from the research community. Another classical method is the so-called \textit{Motzkin Relaxation} (MR) method that selects projection hyperplane vai maximum residual. Interestingly, it has been shown recently that the \textit{Perceptron} algorithm in machine learning \citep{ramdas:2014,ramdas:2016} can be sought as a variant of the MR method. Following the work of Strohmer \textit{et. al}, the research on Kaczmarz type methods found successful for solving a wide range of problems such as linear system, linear feasibility, least square, low-rank matrix recovery, etc \citep[see][]{lewis:2010,needell:2010,eldar:2011,zouzias:2013,lee:2013,NEEDELL:2014,agaskar:2014,ma:2015,NEEDELL:2015, blockneddel:2015,needell:2016, quadratic:2016, wright:2016,needell:2016,haddock:2017, greedbai:2018,haddock:2019, Morshed2019, morshed:2018, razaviyayn:2019, haddock:2019,needell2019block, morshed2020generalization}.

To leverage the unique strength of some of the above algorithms, we propose a comprehensive computing framework, which synthesizes the well-known randomized algorithms such as Kaczmarz, Co-ordinate Descent along with iterative methods such as steepest descent. We develop a stochastic steepest descent algorithm, which is equivalent to the steepest descent interpretation of the so-called SP methods. To the best of our knowledge, this is the first time steepest descent method has been connected to a randomized iterative framework. We also proposed two greedy sketching rules that connect several existing rules into one framework that also generates efficient projection algorithms. We introduced the \textit{Sampling Kaczmarz Motzkin} \citep{haddock:2017,haddock:2019} type selection rule in the \textit{Adaptive Sketch \& Project} method that connects two well known sampling rules, i.e., \textit{Randomized Kaczmarz} and \textit{Motzkin Relaxation} (maximum distance rule). Finally, we developed momentum variant algorithms of the proposed SSD method for solving \textit{Linear Systems}. We achieved a better convergence rate and obtained larger momentum parameter range for the momentum algorithms than the one proved in \citep{loizou:2017}. The proposed greedy methods have superior performance compared to the existing methods. Furthermore, the momentum induced SSD accelerates the basic greedy methods for wide variety of test instances.

\subsection{Notation}
Throughout the paper, we follow standard linear algebra notation. $\mathbb{R}^{m\times n}$ and $\mathbb{R}^{m\times n}_+$ will be used to denote the set of real valued matrices and non-negative matrices, respectively. The feasible region of problem \eqref{jml2} is denoted as $\mathcal{X} = \{x = \argmin \|x-t\|^2_G \ \text{s.t} \ Ax = b\}$. The notation $x^+$ will be used to denote the positive part of any real number, i.e., $x^+ = \max \{x,0\}$. Let $A$ be a matrix. By $a_i^T$, $A_{j}$, $A^{\dagger}$, $\|A\|_F$, $\textbf{Rank}(A)$, $\textbf{Range}(A)$, $\textbf{Null}(A)$, $\lambda_{\min}^+(A)$, $\lambda_{\max}(A)$ we denote the $i^{th}$ row, the $j^{th}$ column, the Moore-Penrose pseudo-inverse, Frobenius norm, rank, range, null, the smallest nonzero eigenvalue, the largest eigenvalue of matrix $A$, respectively. Given any sampling rule $\mathcal{R}$, by which the index $i$ will be chosen, we use the notation $\E[\cdot \ | \ i \sim \mathcal{R} ] = E_i[\cdot]$ to denote the expectation with respect to the sampling rule $\mathcal{R}$. Let, $G \in \R^{n \times n}$ be any positive definite matrix. We define, $\langle x, G x \rangle = x^TGx= \|x\|_G^2 $. For any matrices $A and \ B$, the notation $A \succ B$ defines the positive definiteness of the matrix $A-B$.

\subsection{Outline}
In section \ref{sec:contr}, we first discuss existing randomized methods for solving problem \eqref{jml2}. Then we provide a brief summary of the contributions of our proposed work. In section \ref{sec:tech}, we discuss necessary technical backgrounds about the SP methods in general. In section \ref{sec:algorithms}, we discuss the proposed algorithms and the proposed greedy sketching rules. The convergence results of the proposed methods are provided in section \ref{sec:conv}. In section \ref{sec:special}, we discuss some special cases and their convergence results that can be recovered from our proposed methods. In section \ref{sec:num}, comprehensive numerical experiments are carried out to demonstrate the efficiency of the proposed methods. The paper is concluded in section \ref{sec:conclusions} with remarks about future research. In Appendix, we provide the necessary proofs and external experiment results.

\section{Preliminaries \& Our Contributions}
\label{sec:contr}
In this section, we first discuss some preliminary works that deals with solving problem \eqref{jml2} using the so-called \textit{Sketch \& Project} framework.

\paragraph{Sketch \& Project Methods} \citet{gower2019adaptive} discussed the following generic method \citep[see][]{gower:2015, richtrik2017stochastic} for solving \eqref{jml2}: starting from a random point $x_k$, the SP method updates $x_{k+1}$ as the solution of the following problem:
\begin{align}
\label{eq:jml3}
x_{k+1} =     \argmin_x \|x-x_k\|^2_B \quad \text{subject to} \quad  S_i^TAx = S_i^T b,
\end{align}
where, $S_i \sim \mathcal{D}$. The solution of the above sketched problem has the following closed form:
\begin{align}
\label{eq:jml4}
x_{k+1} = x_k - B^{-1} A^TH_i(Ax_k-b)
\end{align}
where, $H_i = S_i (S_i^TAB^{-1}A^TS_i)^{\dagger}S_i^T$. A general method with $\omega \in (0,2)$ has been proposed by \citet{richtrik2017stochastic}. Furthermore, several variants of SP method have been proposed recently \citep[see][]{loizou:2017, NIPS:2018, gower2019adaptive}.


\paragraph{Sampling Kaczmarz Motzkin (SKM)} \citet{haddock:2017} proposed the following generic method which has been considered for solving linear feasibility:
\begin{align}
\label{jml5}
& x_{k+1}  = x_k - \omega  \frac{\left(a_{i}^Tx_k -b_{i}\right)^+}{\|a_{i}\|^2} a_{i},
\end{align}
where, index $i$ at iteration $k$ is chosen by the following rule: denote $\phi_k(\tau)$ as the collection of $\tau$ rows uniformly sampled from the rows of matrix $A$, then set $i = \argmax_{i \in \phi_k(\tau)} \{a_i^Tx_k-b_i, 0\}$. This has been extended for linear systems by \citet{haddock:2019}. Morshed \textit{et. al} developed several accelerated and momentum variants \citep[see][]{Morshed2019, morshed2020generalization,morshed:momentum,morshed:sketching} of the SKM method for solving linear feasibility problems. It has been shown that, this specific selection rule outperforms the traditional sampling rules such as uniform sampling \citep{strohmer:2008, lewis:2010}, max. distance sampling/Motzkin sampling \citep{motzkin}.

\subsection{Summary of Our Contributions}
\paragraph{Stochastic Steepest Descent Method.} We propose a stochastic steepest descent framework of the so-called SP method by introducing a positive definite matrix $G$. In doing so, we connect several well-known randomized methods such as Kaczmarz, Co-ordinate Descent and exact method such as the Steepest Descent method into one framework.

\begin{table}[h!]
\centering
\caption{SSD methods with momentum for solving problem \eqref{jml2}. See \eqref{jml25} for $\mathcal{W}_k$, $(\lambda_i, u_i)$ is the eigenvalue-eigenvector pair, i.e., $Au_i = \lambda_i u_i$, CD- Co-ordinate Descent, SSD- Stochastic Spectral Descent}
\adjustbox{max width=\textwidth}{
\begin{tabular}{|c|c|c|c|c|}
\hline
$S_i$ & $B$ & Sampling Rule, $q = m$ & $x_{k+1}$ & Kaczmarz \\ \hline
\multirow{2}{*}{$e_i$} & \multirow{2}{*}{$I$} & $i = \argmax_{j \in \phi_k(\tau)} \frac{|a_j^Tx_k-b_j|^2}{\|a_j\|^2} $ & \multirow{2}{*}{\begin{tabular}[c]{@{}c@{}}$x_k + \gamma(x_k-x_{k-1})$\\ $- \omega \frac{a_{i}^Tx_k -b_{i}}{ \|G^{-\frac{1}{2}}a_{i}\|^2} G^{-1} a_{i} $\end{tabular}} & \begin{tabular}[c]{@{}c@{}}Greedy \\ rule \end{tabular} \\ \cline{3-3} \cline{5-5} 
 &  & Capped Sampling ($i \in \mathcal{W}_k$)  &  & \begin{tabular}[c]{@{}c@{}} Capped \\ rule \end{tabular} \\ \hline
$S_i$ & $B$ & Sampling Rule & $x_{k+1}$ & CD-PD \\ \hline
\multirow{2}{*}{$e_i$} & \multirow{2}{*}{$A$} & $i = \argmax_{j \in \phi_k(\tau)} \frac{|a_j^Tx_k-b_j|^2}{A_{jj}} $ & \multirow{2}{*}{\begin{tabular}[c]{@{}c@{}}$x_k + \gamma(x_k-x_{k-1})$\\$- \omega \frac{a_{i}^Tx_k -b_{i}}{\|Ae_i\|^2_{G^{-1} }} G^{-1}A e_{i}$\end{tabular}} & \begin{tabular}[c]{@{}c@{}}Greedy \\ rule \end{tabular} \\ \cline{3-3} \cline{5-5} 
 &  & Capped Sampling ($i \in \mathcal{W}_k$) &  & \begin{tabular}[c]{@{}c@{}} Capped \\ rule \end{tabular} \\ \hline
$S_i$ & $B$ & Sampling Rule & $x_{k+1}$ & CD-LS \\ \hline
\multirow{2}{*}{$Ae_i$} & \multirow{2}{*}{$A^TA$} & $i =  \argmax_{j \in \phi_k(\tau)} \frac{|A_j^T(Ax_k -b)|^2}{\|A_j\|^2} $ & \multirow{2}{*}{\begin{tabular}[c]{@{}c@{}}$x_k + \gamma(x_k-x_{k-1})$\\ $-\omega \frac{A_i^T(Ax_k -b)}{\|A^TAe_i\|^2_{G^{-1} }} G^{-1}A^TA e_{i} $\end{tabular}} & \begin{tabular}[c]{@{}c@{}} Greedy \\ rule \end{tabular} \\ \cline{3-3} \cline{5-5} 
 &  & Capped Sampling ($i \in \mathcal{W}_k$) &  & \begin{tabular}[c]{@{}c@{}}Capped \\ rule \end{tabular} \\ \hline
$S_i$ & $B$ & Sampling Rule, $q = m=n$ & $x_{k+1}$ & SSD \\ \hline
\multirow{2}{*}{$u_i$} & \multirow{2}{*}{$A$} & $i = \argmax_{j \in \phi_k(\tau)} \frac{|\lambda_j u_j^Tx_k-u_j^Tb|^2}{\lambda_j} $  & \multirow{2}{*}{\begin{tabular}[c]{@{}c@{}}$x_k + \gamma(x_k-x_{k-1})$\\$- \omega \frac{\lambda_i u_i^Tx_k-u_i^Tb}{\lambda_i \|G^{-\frac{1}{2}}u_{i}\|^2} G^{-1}u_{i}$\end{tabular}} & \begin{tabular}[c]{@{}c@{}} Greedy \\ rule \end{tabular} \\ \cline{3-3} \cline{5-5} 
 &  & Capped Sampling ($i \in \mathcal{W}_k$) &  & \begin{tabular}[c]{@{}c@{}} Capped \\ rule \end{tabular} \\ \hline
$S_i$ & $B$ & Sampling Rule (exact) & $x_{k+1}$ & SD \\ \hline
\begin{tabular}[c]{@{}c@{}}$A$\end{tabular} & \begin{tabular}[c]{@{}c@{}}$A$\end{tabular} & \begin{tabular}[c]{@{}c@{}}$A \succ 0, \ G = I$\end{tabular} & \begin{tabular}[c]{@{}c@{}}$x_k + \gamma(x_k-x_{k-1})$\\ $-\omega \frac{\|Ax_k-b\|^2}{\|Ax_k-b\|^2_A} (Ax_k-b)$\end{tabular} & \begin{tabular}[c]{@{}c@{}}SD\\ momentum\end{tabular} \\ \hline
\end{tabular}}
\label{tab:AK}
\end{table}

\paragraph{Greedy Sketching Rules.} We extend the scope of the SKM type sampling rules into the more generalized SSD method setting. In doing so, we connect several existing sampling rules such as uniform sampling, max. distance sampling. We also introduce greedy capped rule that extends the so-called capped rule in the proposed SSD framework.

\paragraph{SSD Method with Momentum.} We introduce the well-known heavy ball momentum technique to the developed SSD method. The proposed momentum induced SSD algorithms outperform the basic SSD method on a variety of test instances. Our momentum framework connects several momentum algorithms into one framework. For instance, for the momentum SP method (SSD with $G =B$), we obtain a better convergence result than the obtained in \citep{loizou:2017}. In Table \ref{tab:AK}, we provide special cases of the SSD method along with their respective momentum variants \footnote{Note that in Table \ref{tab:AK}, we present some simplest variants of SSD method. By varying parameters $S_i, \ B, \ G, \ \mathcal{R}$, one can obtain a large array of specialized methods. For instance, with the choice $B = I, \ S_i = I_{:C}$, we get the greedy/momentum version of the randomized block Kaczmarz method proposed by \citet{haddock:2019}. Similarly, with the choice $B = A \succ 0, \ S_i = I_{:C}$, we get a greedy/momentum version of the so-called randomized coordinate Newton descent proposed by \citet{qu:2016}. $I_{:C}$ denotes the column sub-matrix of the $m \times m$ identity matrix indexed by a random set $C$. }.

\paragraph{Global Linear Rate.}
We provide convergence results for the proposed SSD method as well as the momentum SSD method. We establish global convergence results for a wide range of projection parameters $0 < \omega < 2$ and momentum parameter $\gamma \geq 0$. From our convergence results, one can recover convergence results of well-known methods such as steepest descent, Kaczmarz method, Motzkin method, Co-ordinate descent method etc. For the momentum SP method, we obtain a better rate than the existing rate.

\paragraph{Sub-linear Rate.}
We also show that under mild condition, the Cesaro average of iterates, i.e., $\Tilde{x}_k = 1/k \sum \nolimits_{i =0}^k x_i $ generated by SSD and momentum SSD enjoys $\mathcal{O}(1/k)$ sub-linear convergence rate. From our result, we obtain a Cesaro result for the steepest descent method.

\section{Technical Tools}
\label{sec:tech}

In this section, we discuss the general framework for analyzing SP methods proposed by \citet{richtrik2017stochastic}. We start by providing the required assumptions of this work. Then, we introduce function $f(x)$ that is frequently used in literature to analyze SP methods \citep{richtrik2017stochastic,loizou:2017}. At the end of this section, we briefly discuss the stochastic reformulation method proposed  by \citet{loizou:2017}.

\paragraph{Assumptions.} Throughout the paper, we assume the following: (1) the system $Ax = b$ has a solution, and (2) matrix $A$ has no zero rows. Moreover, we assume the following:
\begin{align}
\label{assumption}
  \text{Either} \quad  \sum \limits_{i =1}^{q} H_i \succ \mathbf{0} \quad \text{or} \quad \textbf{Null}\left(\sum \limits_{i =1}^{q} H_i\right) \subseteq \textbf{Null}\left(A^T \right)
\end{align}
where, $H_i = S_i (S_i^TAB^{-1}A^TS_i)^{\dagger}S_i^T$ and $S_i$ is selected following the greedy sketching rule.

\paragraph{Function $f(x)$.} At iteration $k$, the SP method selects a new random matrix $S_i$ based on the sketching rule $\mathcal{R}$. Let's define function $f(x)$ as follows:
\begin{align}
\label{jml6}
 f(x) = \E[f_i(x) \ | \ i \sim \mathcal{R}], \quad   f_{i}(x)  = \frac{1}{2} \|Ax-b\|^2_{H_i} = \frac{1}{2} \|x-x^*\|^2_{Z_i},
\end{align}
where, $ H_i = S_i (S_i^TAB^{-1}A^TS_i)^{\dagger}S_i^T, \ Z_{i} = A^T H_i A$, $B \succ 0$ and $S_i \in \R^m$ is the $i^{th}$ sketching matrix selected from the set $\mathcal{S}(q) = \{S_1,S_2,...,S_q\}$ based on rule $\mathcal{R}$. Note that, the gradients $\nabla f_i$ and $\nabla^G f_i$ of function $f_i$ are given by
\begin{align}
\label{jml7}
  \nabla f_i(x) = A^T H_i(Ax-b), \quad   \nabla^{G} f_i(x) = G^{-1}  A^T H_i(Ax-b),
\end{align}
where, $\nabla^G f_i$ denotes the gradient of $f_i$ with respect to the $G-$ norm. Furthermore, we have
\begin{align*}
  \nabla f(x) = \E[\nabla f_i(x)]= A^T\E[H](Ax-b) = \E[Z](x-x^*), \  \nabla^2 f= \E[Z]
\end{align*}

\paragraph{Stochastic Reformulation.} The following reformulation of problem \eqref{jml2} has been proposed by \citet{richtrik2017stochastic}:
\begin{align}
    \label{jml8}
    x = \argmin f(x) = \frac{1}{2} \|Ax-b\|^2_{\E[H]} = \frac{1}{2} \|x-x^*\|^2_{\E[Z]}.
\end{align}
The optimal function value is given by $f^* = 0$. It can be easy to check that if $\E[H] \succ 0$ holds, then $\|Ax-b\|_{\E[H]} = 0$ implies $Ax-b = 0$. Similarly, when $Ax-b = 0$, we have $\|Ax-b\|_{\E[H]} = 0$. This implies problem \eqref{jml2} and problem \eqref{jml8} are equivalent when $\E[H] \succ 0$ holds. \citet{richtrik2017stochastic} showed that the equivalency of problem \eqref{jml2} and problem \eqref{jml8} can be proven under weaker conditions (this property is denoted as exactness). They proved that problem \eqref{jml2} and problem \eqref{jml8} are equivalent provided that any of the following conditions holds: $\E[H] \succ \mathbf{0}$ or $\textbf{Null}\ (\E[H]) \subseteq \textbf{Null}\ (A^T)$. 

\section{Algorithms}
\label{sec:algorithms}
In this section, we first propose a \textit{Stochastic Steepest Descent} framework that is equipped with greedy sampling rule $\mathcal{R}$. The proposed method generalizes the methods proposed by \citet{gower:2015}, \citet{richtrik2017stochastic}, and \citet{gower2019adaptive}. Our framework allows one to design efficient algorithms based on greedy sampling strategies. The following simplified expressions will be used throughout the paper.

\begin{definition}
\label{jml17}
For any $i \in \N$, let us define the following:
\begin{align*}
   &  T_{i} = G^{-\frac{1}{2}}Z_{i} G^{-\frac{1}{2}}, \ \mathbf{T} = \sum\nolimits_{j=1}^{q} T_j, \ \mu_2(i) = \lambda_{\max}(T_{i}), \ \ \mu_1(i) = \lambda_{\min}(T_{i}), \ \ \mu^+_1(i) = \lambda^+_{\min}(T_{i}) \\
   & r_k = G^{\frac{1}{2}} (x_k-x^*), \ \ \E[T] = \E[T_{i} \ | \ i \sim \mathcal{R}], \  \mu_2 = \max_{i} \mu_2(i) , \  \mu^+_1 = \min_{i} \mu^+_1(i), \ \ \sigma_i = \frac{ \mu_2(i)}{\mu^+_1(i)}
\end{align*}
\end{definition}

\subsection{Stochastic Steepest Descent (SSD)}
Before, we delve into the SSD method, we discuss the main motivation of this work. In the original \textit{Sketch \& Project} method ($\omega =1, \ Z_{i_k} B^{-1} Z_{i_k} = Z_{i_k}$), we have the following identity:
\begin{align*}
    f_{i_k}(x_{k+1}) = \frac{1}{2} \|(I-B^{-1}Z_{i_k})(x_{k}-x^*)\|^2_{Z_{i_k}} = 0
\end{align*}
This motivates us to search for method that gradually decreases the function value, i.e. $f_{i_k}(x_{k+1}) < f_{i_k}(x_k) $. If we set 
steepest direction $d_k = - \nabla^{G} f_i(x_k)$, then we set $x_{k+1}$ as,
\begin{align}
\label{jml9}
    x_{k+1} = x_k + \bm{\alpha} d_k = x_k - \bm{\alpha} \nabla^{G} f_i(x)  \quad \textbf{s.t} \quad \bm{\alpha} = \argmin_{\alpha} f_i(x_{k+1})
\end{align}
Now, by solving the problem we derive the expression for $\bm{\alpha}$ as follows:
\begin{align}
\label{jml10}
    \bm{\alpha} = \argmin_{\alpha} f_i(x_{k+1}) = \argmin_{\alpha } f_i(x_k-  \bm{\alpha} \nabla^{G} f_{i}(x_k)) 
    = \frac{\|\nabla^{G} f_{i}(x_k)\|^2_G}{\|\nabla^{G} f_{i}(x_k)\|^2_{Z_{i_k}}}
\end{align}
Note that, with this choice, we get
\begin{align}
\label{jml11}
    f_i(x_{k+1}) = f_i(x_k) - \frac{\|\nabla^{G} f_{i}(x_k)\|^4_G}{2\|\nabla^{G} f_{i}(x_k)\|^2_{Z_{i_k}}} < f_i(x_k)
\end{align}
In doing so, we extended the scope of the so-called SP methods to a more general framework. Surprisingly, with the choice $G =I, \ B = A, \ S = A \succ 0$, we get the following:
\begin{align}
\label{steepsest}
   x_{k+1} = x_k - \alpha_k (Ax-b), \ \ \alpha_{k} = \frac{\|Ax_k-b\|^2}{\|Ax_k-b\|^2_A}
\end{align}
which is precisely the steepest descent method for solving linear system $Ax = b, \ A \succ 0$. 

\begin{algorithm}
\caption{SSD Algorithm: $x_{k+1} = \textbf{SSD}(A,b,x_0, B, G, \mathcal{S}(q),  \mathcal{R}, K)$}
\label{alg:ssd}
\begin{algorithmic}
\STATE{Choose initial point $x_0 \in \R^n$}
\WHILE{$k \leq K$}
\STATE{From $\mathcal{S}(q)$, select matrix $S_{i_k}$ such that $i_k \in \mathcal{R}$. Then update
\begin{align*}
& x_{k+1}  = x_k - \alpha_{i_k} \ \nabla^{G} f_{i}(x_k); \quad \alpha_{i_k}   = \frac{\|\nabla^{G} f_{i}(x_k)\|^2_G}{\|\nabla^{G} f_{i}(x_k)\|^2_{Z_{i_k}}}
\end{align*}
$k \leftarrow k+1$;}
\ENDWHILE
\RETURN $x$
\end{algorithmic}
\end{algorithm}

\noindent Furthermore, if we take $G = B$, we get $Z_{i_k}  G^{-1} Z_{i_k} = Z_{i_k}  B^{-1} Z_{i_k} =  Z_{i_k}$. Then we get,
\begin{align*}
  x_{k+1}  = x_k - \nabla^{B} f_i(x_k), \quad  \alpha_{i_k} 
    = \frac{\|\nabla^{G} f_{i}(x_k)\|^2_G}{\|\nabla^{G} f_{i}(x_k)\|^2_{Z_{i_k}}}  =  \frac{\|x_k-x^*\|^2_{Z_{i_k}}}{\|x_k-x^*\|^2_{Z_{i_k}}} = 1 
\end{align*}
this can be interpreted as the so-called adaptive \textit{Sketch \& Project} method. This special framework was considered in \citep{richtrik2017stochastic, gower2019adaptive}. For comparison purpose with the SP method, we allow a parameter $\omega \in (0,2)$ in \eqref{jml9} to get the following:
\begin{align}
\label{jml13}
    x_{k+1} = x_k - \omega \alpha_{i_k} \nabla^{G} f_i(x);  \quad  \alpha_{i_k} =  \frac{\|\nabla^{G} f_{i}(x_k)\|^2_G}{\|\nabla^{G} f_{i}(x_k)\|^2_{Z_{i_k}}}
\end{align}

\subsection{Stochastic Steepest Descent with Momentum (SSDM)}
In this subsection, we introduce a momentum variant of the SSD algorithm. Introducing the so-called heavy ball update formula to the update of the SSD algorithm, we get the following:
\begin{align}
\label{jml16}
   x_{k+1} = x_k - \alpha_{i_k} \ \nabla^{G} f_i(x_k) + \gamma (x_k-x_{k-1}).
\end{align}

\begin{algorithm}
\caption{SSDM Algorithm: $x_{k+1} = \textbf{SSDM}(A,b,x_0, \gamma, B, G, \mathcal{S}(q),  \mathcal{R}, K)$}
\label{alg:SSDM}
\begin{algorithmic}
\STATE{Choose initial point $x_0 \in \R^n$}
\WHILE{$k \leq K$}
\STATE{From $\mathcal{S}(q)$, select matrix $S_{i_k}$ such that $i_k \in \mathcal{R}$. Then update
\begin{align*}
& x_{k+1}  = x_k - \alpha_{i_k}\ \nabla^{G} f_{i}(x_k) + \gamma (x_k-x_{k-1}); \quad \alpha_{i_k}   = \frac{\|\nabla^{G} f_{i}(x_k)\|^2_G}{\|\nabla^{G} f_{i}(x_k)\|^2_{Z_{i_k}}}
\end{align*}
$k \leftarrow k+1$;}
\ENDWHILE
\RETURN $x$
\end{algorithmic}
\end{algorithm}

\begin{remark}
Fix the following parameters: $G =I, \ B = A, \ S = A \succ 0$ along with an adaptive momentum parameter $\gamma_k = \alpha_k \frac{\beta_{k-1}}{\alpha_{k-1}}$ in the SSDM algorithm. Then, we get the following:
\begin{align}
     x_{k+1}= x_k + \alpha_k (b-Ax_k) + \alpha_k \frac{\beta_{k-1}}{\alpha_{k-1}} (x_k-x_{k-1})
\end{align}
Take, $u_k = b-Ax_k$ and $p_k = u_k + \frac{\beta_{k-1}}{\alpha_{k-1}} (x_k-x_{k-1})$. Then we can deduce the following:
\begin{align}
\label{conjugate}
  & x_{k+1} = x_k + \alpha_k p_k, \ \ u_{k+1} = u_k-\alpha_k Ap_k, \ \ \alpha_k =  \frac{p_k^T u_k}{\|p_k\|^2_{A}} =  \frac{\|u_k\|^2}{\|p_k\|^2_{A}}  \\
  & \beta_k = - \frac{p_k^T A u_{k+1}}{\|p_k\|^2_{A}} = \frac{\|u_{k+1}\|^2}{\|u_{k}\|^2}, \ \ p_{k+1} = u_{k+1} + \beta_k p_k
\end{align}
With the initial condition $u_0 = b-Ax_0, \ x_0 \in \R^n$, this is precisely the \textit{Conjugate Gradient Method} for solving linear system $Ax= b, \ A \succ 0$ \citep[see][]{bhaya2004}.
\end{remark}

\subsection{Greedy Sketching (GS)}
\label{ss}
We propose the following sketching rule: choose a sample of $\tau$ sketching matrices uniformly at random from the sketched matrix set $\mathcal{S}(q)$. Denote $\phi_k(\tau)$ as the generated index set by the above sampling process. At iteration $k$, set $i$ as
\begin{align}
\label{jml18}
i = \argmax_{i \in \phi_k(\tau)} f_i(x_k) = \frac{1}{2} \argmax_{i \in \phi_k(\tau)} \big \| Ax_k-b \big \|_{H_{i_k}}^2 = \frac{1}{2} \argmax_{i \in \phi_k(\tau)} r_k^T T_{i_k} r_k
\end{align}
To calculate the expectation with respect to this sketching rule, we will use the following setup. First, let us fix any random iterate $x_k$, then we sort the function values $f_{i}(x_k)$ from smallest to largest. Define, $f_{\underline{\mathbf{i_j}}}(x_k)$ as the $(\tau+j)^{th}$ entry on the sorted list, i.e.,
\begin{align}
\label{jml19}
  \underbrace{f_{\underline{\mathbf{i_0}}}(x_k)}_{\tau^{th}} \ \leq ... \leq \ \underbrace{f_{\underline{\mathbf{i_j}}}(x_k)}_{(\tau+j)^{th}} \ \leq ... \leq \ \underbrace{f_{\underline{\mathbf{i_{q-\tau}}}}(x_k)}_{q^{th}}.
\end{align}
Now, if we randomly choose any entry from list \eqref{jml19} then each entry has a equal probability of selection and the equal probability is $1/\binom{m}{\tau}$. Based on the discussion, we have the following:
{\allowdisplaybreaks
\begin{align}
\label{jml20}
 \E[f_i(x_k) \ | \ i \sim \mathcal{G(\tau)}] & = \frac{1}{ \binom{q}{\tau}} \sum\limits_{j = 0}^{q-\tau} \binom{\tau-1+j}{\tau-1} f_{\underline{\mathbf{i_j}}}(x_k) =  \frac{1}{2 \binom{q}{\tau}} \sum\limits_{j = 0}^{q-\tau} \binom{\tau-1+j}{\tau-1}  \|x_k-x^*\|_{Z_{\underline{\mathbf{i_j}}}}^2
 \end{align}}
This sketching approach allows us to connect two well-known sketching rules into one framework. For instance, with the choice $\tau =1$, we have the following:
{\allowdisplaybreaks
\begin{align}
\label{jml21}
 \E[f_i(x_k) \ | \ i \sim \mathcal{G}(\tau)] & = \frac{1}{ \binom{q}{1}} \sum\limits_{j = 0}^{q-1}  f_{\underline{\mathbf{i_j}}}(x_k) = \frac{1}{q} \sum\limits_{i = 1}^{q}  f_i(x_k).
 \end{align}}
This is the so-called uniform sketching rule. Similarly, by taking $\tau = m$, we get $\E[f_i(x_k) \ | \ i \sim \mathcal{G}(\tau)]  = \max_{i \in \{1,2,...,q\}} f_i(x_k)$. That is the so-called maximum distance sketching rule.

\subsection{Greedy Capped Sketching (GCS)}
\label{gs}
Now, using the GS rule, we will propose a greedy version of the capped sampling rule \citep{gower2019adaptive}. Pick two sampled sketching matrices of sizes $\tau_1$ and $\tau_2$ respectively, uniformly at random (sampling with replacement) from the set $\mathcal{S}(q)$. For any $0 \leq \theta \leq 1$, let's define the following:
\begin{align}
\label{jml25}
    \mathcal{W} = \left\{i  | \ f_i(x_k) \geq \theta  \E[f_i(x_k) \ | \ i \sim \mathcal{G}(\tau_1)] + (1-\theta) \E[f_i(x_k) \ | \ i \sim \mathcal{G}(\tau_2)] \right\}
\end{align}
then select $i \in \mathcal{W}$ with probability $p_i$. Set $\mathcal{W}$ is not empty as $\theta  \E[f_i(x_k) \ | \ i \sim \mathcal{G}(\tau_1)] + (1-\theta) \E[f_i(x_k) \ | \ i \sim \mathcal{G}(\tau_2)] \leq \max_{i \in \{1,2,..,q\}} f_i(x_k)$ holds. That implies $\max_{i \in \{1,2,..,q\}} f_i(x_k) \in \mathcal{W}$. The resulting expectation can be calculated \footnote{The expectation computation is not of practical choice for the implementation. We suggest to use a lower bound, i.e., $\E[f_i(x_k) \ | \ i \sim \mathcal{G}(\tau)]\} \geq \frac{1}{2 q} \sum \nolimits_{j=1}^{q} \|Ax_k-b\|^2_{H_{i_k}} $, see the proof of Theorem \ref{lem:mu} for details.} as follows:
\begin{align}
\label{jml26}
    \E[f_i(x_k) \ | \ i \sim \mathcal{C}(\theta, \tau_1,\tau_2)] & = \sum \limits_{j \in \mathcal{W}} p_j f_j(x_k)
\end{align}

\section{Convergence Theory}
\label{sec:conv}
In this section, we provide the convergence results for the proposed SSD and SSDM algorithms. Without loss of generality, we show the results for any generic sketching rule $\mathcal{R}$. For ease of presentation, we use the notation $\E_i[\cdot]$ to denote $\E_{i}[\cdot \ | \ i \sim \mathcal{R}]$ throughout this section. We start the section by discussing some useful Lemmas that will be used frequently in our analysis. The, we provide the convergence results of the following quantities: $\|\E[x_k-x^*]\|^2, \ \E[\|x_k-x^*\|_G^2], \ \E[f(x_k)]$, and $\E[f(\bar{x}_k)]$. We finish the section with the discussion about some special cases that can be obtained from the proposed methods.

\paragraph{Technical results} The following results are crucial for our convergence analysis. Similar types of results can be found in the literature that are used frequently to estimate the convergence rate of Steepest Descent methods \footnote{Note that, when implementing the algorithm, we only consider the case $\|x_k-x^*\|_{Z_{i_k}} > 0$ as $\|x_k-x^*\|_{Z_{i_k}} = 0$ implies $\nabla^G f(x_k) = 0 \ \Rightarrow \ x_{k+1} = x_k$.}.

\begin{lemma}
\label{lem:range}
If $x_0 \in \textbf{Range}(G^{-1}A^T)$, then we have the following:
\begin{align*}
  r_k \in \textbf{Range}(G^{-\frac{1}{2}}A^T) =  \textbf{Range} \left(\mathbf{T}\right) 
\end{align*}
\end{lemma}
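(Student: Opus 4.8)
The plan is to track the subspace membership of the iterates through the SSD update and then identify the relevant ranges. First I would recall the update rule: $x_{k+1} = x_k - \omega\alpha_{i_k}\nabla^G f_{i_k}(x_k) = x_k - \omega\alpha_{i_k} G^{-1} A^T H_{i_k}(Ax_k - b)$. Since $A^T H_{i_k}(Ax_k-b) \in \textbf{Range}(A^T)$, the correction term lies in $G^{-1}\textbf{Range}(A^T) = \textbf{Range}(G^{-1}A^T)$. Hence if $x_0 \in \textbf{Range}(G^{-1}A^T)$, an easy induction shows $x_k \in \textbf{Range}(G^{-1}A^T)$ for all $k$. The subtlety is that we need $x_k - x^*$, not $x_k$, in that range; but $x^*$ is the solution of problem \eqref{jml2} with parameter $t$, which by the projection characterization also satisfies $x^* - t \in \textbf{Range}(G^{-1}A^T)$ (the optimality conditions of the $G$-norm projection). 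Actually, the cleanest route is to note that the difference of any two iterates, and more generally $x_k - x^*$, stays in $\textbf{Range}(G^{-1}A^T)$ provided the initialization is consistent with $x^*$; I would state this carefully using that $x_0 \in \textbf{Range}(G^{-1}A^T)$ is exactly the standing hypothesis that makes $x^*$ the minimum-$G$-norm-type solution reachable, so $x_k - x^* \in \textbf{Range}(G^{-1}A^T)$.

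Next I would apply $G^{1/2}$ to get $r_k = G^{1/2}(x_k - x^*) \in G^{1/2}\textbf{Range}(G^{-1}A^T) = \textbf{Range}(G^{-1/2}A^T)$, which establishes the first equality. For the second equality, $\textbf{Range}(G^{-1/2}A^T) = \textbf{Range}(\mathbf{T})$, I would unpack the definitions from Definition \ref{jml17}: $\mathbf{T} = \sum_{j=1}^q T_j$ with $T_j = G^{-1/2}Z_j G^{-1/2}$ and $Z_j = A^T H_j A$, so $\mathbf{T} = G^{-1/2}A^T\big(\sum_j H_j\big)A G^{-1/2}$. Thus $\textbf{Range}(\mathbf{T}) \subseteq \textbf{Range}(G^{-1/2}A^T)$ trivially. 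For the reverse inclusion I would use the assumption \eqref{assumption}: either $\sum_j H_j \succ 0$, in which case $\textbf{Range}\big(A^T(\sum_j H_j)A\big) = \textbf{Range}(A^T A) = \textbf{Range}(A^T)$ and then conjugating by $G^{-1/2}$ gives the claim; or $\textbf{Null}(\sum_j H_j) \subseteq \textbf{Null}(A^T)$, which I would use together with the symmetry of $\mathbf{T}$ (so $\textbf{Range}(\mathbf{T}) = \textbf{Null}(\mathbf{T})^\perp$) to show $\textbf{Null}(\mathbf{T}) = \textbf{Null}(G^{-1/2}A^T)$ and hence the ranges coincide.

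The main obstacle I anticipate is the range equality $\textbf{Range}(G^{-1/2}A^T) = \textbf{Range}(\mathbf{T})$ in the degenerate case where $\sum_j H_j$ is only positive semidefinite: one must carefully chase null spaces through the three-factor product $G^{-1/2}A^T(\sum_j H_j)AG^{-1/2}$, using that $H_j$ is symmetric PSD (being of the form $S_j(\cdot)^\dagger S_j^T$ with the middle factor a pseudo-inverse of a symmetric PSD matrix), so that $v^T A(\sum_j H_j) A v = 0$ forces $(\sum_j H_j)^{1/2} A v = 0$, hence $AGv \ldots$ — the bookkeeping with $G^{\pm 1/2}$ insertions is where errors creep in. The induction step and the first range identification are routine by comparison. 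I would also remark that the hypothesis $x_0 \in \textbf{Range}(G^{-1}A^T)$ is natural because it is precisely the condition ensuring $x_k$ converges to the specific solution $x^*$ of \eqref{jml2}.
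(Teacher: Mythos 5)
Your proposal is correct and follows essentially the same route as the paper: an induction through the update formula to keep the iterates (hence $r_k$) in $\textbf{Range}(G^{-\frac{1}{2}}A^T)$, followed by a null-space analysis of $\mathbf{T} = G^{-\frac{1}{2}}A^T\bigl(\sum_j H_j\bigr)AG^{-\frac{1}{2}}$ under assumption \eqref{assumption} to identify the two ranges (the paper delegates that last step to Lemma \ref{lem:gower} rather than re-deriving it, and runs the induction directly on $r_k$ with the momentum update so that SSDM is covered as well). The subtlety you flag about $x^*$ versus $x^*-t$ is handled no more carefully in the paper, which simply writes $x^* = -G^{-1}A^T\lambda \in \textbf{Range}(G^{-1}A^T)$ from the optimality conditions.
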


\begin{lemma}
\label{lem:jm1}
Assume, $x^*$ is a solution of problem \eqref{jml2} and $x_k$ is a random iterate. Then, the following identities hold:
\begin{align}
   & \mu^+_1(i_k) \ r_k^T T_{i_k} r_k \leq r_k^T T^2_{i_k} r_k \leq  \mu_2(i_k) \ r_k^T T_{i_k} r_k,  \label{t:1}\\
    & \mu^+_1(i_k) \ r_k^T T^2_{i_k} r_k \leq r_k^T T^3_{i_k} r_k  \leq  \mu_2(i_k) \ r_k^T T^2_{i_k} r_k, \label{t:2} \\
    & \frac{1}{\mu_2} \leq \frac{1}{\mu_2(i_k)} \leq \alpha_{i_k} \leq   \frac{1}{\mu_1^+(i_k)} \leq \frac{1}{\mu^+_1}, \quad G = B \Rightarrow \alpha_{i_k} = 1 \label{t:4}
\end{align}
\end{lemma}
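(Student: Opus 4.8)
The plan is to recall the definitions $T_{i_k} = G^{-1/2} Z_{i_k} G^{-1/2}$ with $Z_{i_k} = A^T H_{i_k} A$ and $H_{i_k} = S_{i_k}(S_{i_k}^T A B^{-1} A^T S_{i_k})^\dagger S_{i_k}^T$, together with $r_k = G^{1/2}(x_k - x^*)$, and to exploit the fact that $T_{i_k}$ is symmetric positive semidefinite. The key structural observation is that $H_{i_k}$, being of the form $S(S^T M S)^\dagger S^T$ with $M = AB^{-1}A^T \succ 0$ (or PSD), is an oblique/orthogonal-type projector in an appropriate inner product; in particular $Z_{i_k} B^{-1} Z_{i_k} = Z_{i_k}$ in the adaptive case, and more generally the eigenvalues of $T_{i_k}$ are controlled. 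I would not need the projector identity for \eqref{t:1}--\eqref{t:2} though — those follow purely from the spectral theorem applied to the PSD matrix $T_{i_k}$.

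First I would prove \eqref{t:1}. Diagonalize $T_{i_k} = \sum_j \lambda_j v_j v_j^T$ with $\lambda_j \geq 0$ and write $c_j = v_j^T r_k$. Then $r_k^T T_{i_k} r_k = \sum_j \lambda_j c_j^2$ and $r_k^T T_{i_k}^2 r_k = \sum_j \lambda_j^2 c_j^2$. On the support where $c_j \neq 0$ and $\lambda_j > 0$ (which is all that contributes, and is exactly where $r_k$ lives by Lemma \ref{lem:range}), each $\lambda_j$ satisfies $\mu_1^+(i_k) \leq \lambda_j \leq \mu_2(i_k)$, so multiplying $\lambda_j c_j^2 \geq 0$ through by these bounds and summing gives $\mu_1^+(i_k)\sum_j \lambda_j c_j^2 \leq \sum_j \lambda_j^2 c_j^2 \leq \mu_2(i_k)\sum_j \lambda_j c_j^2$, which is exactly \eqref{t:1}. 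For \eqref{t:2} the identical argument applies with the roles of $T_{i_k} r_k$ replacing $r_k$: write $r_k^T T_{i_k}^3 r_k = (T_{i_k}^{1/2} T_{i_k} r_k)^T T_{i_k} (T_{i_k}^{1/2}\cdots)$ — more cleanly, set $s_k = T_{i_k} r_k$, observe $r_k^T T_{i_k}^2 r_k = \|T_{i_k}^{1/2} s_k\|^2 \cdot$ (no — just) $= s_k^T s_k$ isn't quite it; the cleanest route is to note $r_k^T T_{i_k}^2 r_k = \sum_j \lambda_j^2 c_j^2$ and $r_k^T T_{i_k}^3 r_k = \sum_j \lambda_j^3 c_j^2$ and bound $\lambda_j^3 c_j^2 = \lambda_j \cdot (\lambda_j^2 c_j^2)$ with $\mu_1^+(i_k) \leq \lambda_j \leq \mu_2(i_k)$ on the relevant support, then sum.

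For \eqref{t:4}, I would start from the definition $\alpha_{i_k} = \|\nabla^G f_{i}(x_k)\|_G^2 / \|\nabla^G f_i(x_k)\|_{Z_{i_k}}^2$, substitute $\nabla^G f_i(x_k) = G^{-1} A^T H_{i_k}(Ax_k - b) = G^{-1}Z_{i_k}(x_k-x^*) = G^{-1/2} T_{i_k} r_k$, and compute the two norms: $\|G^{-1/2}T_{i_k} r_k\|_G^2 = r_k^T T_{i_k}^2 r_k$ and $\|G^{-1/2}T_{i_k}r_k\|_{Z_{i_k}}^2 = (T_{i_k}r_k)^T G^{-1/2} Z_{i_k} G^{-1/2} (T_{i_k}r_k) = r_k^T T_{i_k}^3 r_k$. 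Hence $\alpha_{i_k} = (r_k^T T_{i_k}^2 r_k)/(r_k^T T_{i_k}^3 r_k)$, and the two-sided bound \eqref{t:4} follows immediately by inverting \eqref{t:2}. The absolute bounds $1/\mu_2 \leq \cdots \leq 1/\mu_1^+$ then follow from $\mu_2 = \max_i \mu_2(i)$ and $\mu_1^+ = \min_i \mu_1^+(i)$. Finally, for $G = B$ we have $Z_{i_k} B^{-1} Z_{i_k} = Z_{i_k}$ (the projector identity), hence $T_{i_k}^2 = G^{-1/2} Z_{i_k} G^{-1} Z_{i_k} G^{-1/2} = G^{-1/2} Z_{i_k} G^{-1/2} = T_{i_k}$, so $T_{i_k}$ is idempotent; then $r_k^T T_{i_k}^2 r_k = r_k^T T_{i_k} r_k = r_k^T T_{i_k}^3 r_k$ and $\alpha_{i_k} = 1$.

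The main obstacle, such as it is, is bookkeeping about the support of $r_k$: the inequalities in \eqref{t:1}--\eqref{t:2} would be false if $\mu_1^+(i_k)$ (the smallest \emph{nonzero} eigenvalue) were replaced naively without knowing that $r_k$ has no component in $\textbf{Null}(T_{i_k})$ that matters — but in fact it does not matter, since any null component contributes zero to every quadratic form $r_k^T T_{i_k}^p r_k$ for $p \geq 1$. So the only genuine care needed is to restrict the eigen-sum to $\lambda_j > 0$, which is harmless. I would also verify the projector identity $Z_{i_k} B^{-1} Z_{i_k} = Z_{i_k}$ cleanly from $H_{i_k} = S(S^TMS)^\dagger S^T$ and the pseudoinverse identity $(S^TMS)(S^TMS)^\dagger(S^TMS) = S^TMS$; this is the one place the specific form of $H_{i_k}$ (rather than mere positive semidefiniteness) is used.
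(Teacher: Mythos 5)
Your proposal is correct and follows essentially the same route as the paper: bound the Rayleigh quotients of the PSD matrix $T_{i_k}$ by its extreme nonzero eigenvalues (the paper invokes Courant--Fischer on $\textbf{Range}(T_{i_k}^{1/2})$ where you write out the eigendecomposition explicitly, which is the same observation that null-space components contribute nothing to any $r_k^T T_{i_k}^p r_k$ with $p\geq 1$), then read off the $\alpha_{i_k}$ bounds from $\alpha_{i_k} = (r_k^T T_{i_k}^2 r_k)/(r_k^T T_{i_k}^3 r_k)$ together with \eqref{t:2}, and get $\alpha_{i_k}=1$ for $G=B$ from the idempotency $T_{i_k}^2 = T_{i_k}$. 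Your explicit derivations of the formula for $\alpha_{i_k}$ and of the projector identity $Z_{i_k}B^{-1}Z_{i_k}=Z_{i_k}$ are details the paper states without proof, but they do not change the argument.
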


\begin{theorem}
\label{lem:mu}
For the greedy sketching rules proposed in subsections \ref{ss} and \ref{gs}, there exist constants $0 < \lambda_1^+ \leq \lambda_2 $ such that the following bound holds: 
\begin{align}
    & \lambda_1^+ \ \|r_k\|^2 \leq r_k^T \E[T] r_k = 2 f(x_k) \leq \lambda_2 \ \|r_k\|^2  \label{t:3}
\end{align}
where, the expectation is taken with respect to the corresponding sketching rules. Furthermore, we have the following estimation:
\begin{align}
    \textbf{GSR:} & \quad \lambda_1^+ =  \frac{ \lambda_{\min}^+\left(\mathbf{T}\right)}{\bar{q}_k(\tau)}, \ \ \lambda_2 =  \min \left\{\mu_2, \frac{ \tau \lambda_{\max}\left(\mathbf{T}\right)}{q}\right\} \label{g1}\\
    \textbf{GCS:} & \quad \lambda_1^+ =  \theta \frac{ \lambda_{\min}^+\left(\mathbf{T}\right)}{ \bar{q}_k(\tau_1)}  + (1-\theta) \frac{ \lambda_{\min}^+\left(\mathbf{T}\right)}{ \bar{q}_k(\tau_2)}, \ \ \lambda_2 =  \mu_2 \label{g2}
\end{align}
where, $\tau$ is the sketch sample size for GSR rule and $\tau_1$ and $\tau_2$ are the respective sketch sample sizes for the GCS rule. Moreover, $\bar{q}_k(\tau) = \max\{q-s_k, q- \tau+1\} \leq q$, with $s_k$ denotes the number of zero entries in the list $[f_1(x_k),f_2(x_k),...,f_q(x_k)]$.
\end{theorem}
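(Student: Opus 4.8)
The plan is to establish the upper and lower bounds separately, treating the GSR and GCS rules in turn, and working throughout in the transformed coordinates $r_k = G^{1/2}(x_k-x^*)$ so that $2f_i(x_k) = r_k^T T_{i_k} r_k$ and $2f(x_k) = r_k^T \E[T] r_k$. The key combinatorial input is the explicit formula \eqref{jml20} for the expectation under greedy sampling, which writes $\E[f_i(x_k)\mid i\sim\mathcal{G}(\tau)]$ as a convex combination $\sum_{j=0}^{q-\tau}\binom{\tau-1+j}{\tau-1}/\binom{q}{\tau}\, f_{\underline{\mathbf i_j}}(x_k)$ of the order statistics of the list $[f_1(x_k),\dots,f_q(x_k)]$, where the weights sum to one (a hockey-stick / Vandermonde identity).

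For the \textbf{upper bound} in \eqref{t:3} under GSR: since $\E[f_i(x_k)\mid i\sim\mathcal G(\tau)]$ is a convex combination of values each of which is at most $\max_i f_i(x_k)\le \tfrac12\mu_2\|r_k\|^2$, we immediately get $\lambda_2\le\mu_2$. To get the competing bound $\tau\lambda_{\max}(\mathbf T)/q$, I would bound each order statistic $f_{\underline{\mathbf i_j}}(x_k)$ crudely by the full sum $\sum_{i=1}^q f_i(x_k) = \tfrac12 r_k^T\mathbf T r_k \le \tfrac12\lambda_{\max}(\mathbf T)\|r_k\|^2$, and then control $\sum_{j=0}^{q-\tau}\binom{\tau-1+j}{\tau-1}/\binom{q}{\tau}$ — actually I must be a little more careful: the right way is to observe that averaging the $\binom q\tau$ maxima over all $\tau$-subsets, each original index $i$ appears as the max in at most $\binom{q-1}{\tau-1}$ of them, giving $\E[f_i]\le \binom{q-1}{\tau-1}/\binom q\tau \sum_i f_i = (\tau/q)\sum_i f_i(x_k)$. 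Taking the minimum of the two bounds yields $\lambda_2$ in \eqref{g1}. For GCS, since $\mathcal W$ only contains indices with $f_i(x_k)\ge$ (a threshold that is itself $\le\max_i f_i$), and we select within $\mathcal W$ with probabilities $p_i$, the expectation $\sum_{j\in\mathcal W}p_jf_j(x_k)$ is again a convex combination of values $\le\max_i f_i(x_k)\le\tfrac12\mu_2\|r_k\|^2$, giving $\lambda_2=\mu_2$ in \eqref{g2}.

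For the \textbf{lower bound}: the convex-combination formula \eqref{jml20} places weight only on the $(\tau)^{th}$ through $q^{th}$ largest values, so every term is $\ge f_{\underline{\mathbf i_0}}(x_k)$, the $\tau^{th}$-smallest value; but to relate this to $\lambda_{\min}^+(\mathbf T)$ I instead use the lower bound indicated in the footnote, $\E[f_i(x_k)\mid i\sim\mathcal G(\tau)]\ge \tfrac{1}{2\bar q_k(\tau)}\sum_{i=1}^q\|Ax_k-b\|^2_{H_i} = \tfrac{1}{2\bar q_k(\tau)}r_k^T\mathbf T r_k$. The point is that the greedy rule puts zero weight on the smallest $\tau-1$ entries, and if $s_k$ of the entries are zero anyway, then discarding the bottom $\max\{s_k,\tau-1\}$ zero-or-smallest entries loses nothing relative to $\sum_i f_i$ beyond the factor $\bar q_k(\tau)=\max\{q-s_k,q-\tau+1\}$; combined with $r_k\in\textbf{Range}(\mathbf T)$ from Lemma \ref{lem:range} (so that $r_k^T\mathbf T r_k\ge\lambda_{\min}^+(\mathbf T)\|r_k\|^2$) this gives $\lambda_1^+=\lambda_{\min}^+(\mathbf T)/\bar q_k(\tau)$. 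For GCS I split $f(x_k)=\E[f_i\mid i\sim\mathcal C(\theta,\tau_1,\tau_2)]$ against the defining inequality of $\mathcal W$: every selected $j\in\mathcal W$ satisfies $f_j(x_k)\ge\theta\,\E[f_i\mid\mathcal G(\tau_1)]+(1-\theta)\E[f_i\mid\mathcal G(\tau_2)]$, so $\sum_{j\in\mathcal W}p_jf_j(x_k)$ is bounded below by that same convex combination (since $\sum_{j\in\mathcal W}p_j=1$), and then I apply the GSR lower bound to each of the two terms to obtain \eqref{g2}.

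The main obstacle I anticipate is the careful bookkeeping in the lower bound: making precise the claim that the greedy expectation dominates $\tfrac{1}{\bar q_k(\tau)}\sum_i f_i(x_k)$ requires handling the interplay between the number $s_k$ of vanishing entries and the cut-off $\tau-1$, and verifying the combinatorial weight identity $\sum_{j=0}^{q-\tau}\binom{\tau-1+j}{\tau-1}=\binom{q}{\tau}$ so that \eqref{jml20} is genuinely a convex combination. The upper bound's "each index is the max in at most $\binom{q-1}{\tau-1}$ subsets" counting argument is routine but must be stated cleanly; everything else is an application of the spectral inequalities in Lemma \ref{lem:jm1} together with Lemma \ref{lem:range}.
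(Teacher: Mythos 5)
Your overall route is the same as the paper's: for the upper bound you combine the trivial $\max_i f_i(x_k)\le\tfrac{\mu_2}{2}\|r_k\|^2$ estimate with a counting argument giving $\E[f_i(x_k)]\le\tfrac{\tau}{q}\sum_i f_i(x_k)=\tfrac{\tau}{2q}r_k^T\mathbf{T}r_k$; for the lower bound you reduce to $\tfrac{1}{2\bar q_k(\tau)}r_k^T\mathbf{T}r_k$ and invoke Lemma \ref{lem:range} with Courant--Fischer; for GCS you use the defining threshold of $\mathcal{W}$ together with $\sum_{j\in\mathcal{W}}p_j=1$. All of that matches the paper (your ``each index is the argmax in at most $\binom{q-1}{\tau-1}$ subsets'' counting is an equivalent repackaging of the paper's bound $\binom{\tau-1+j}{\tau-1}\le\binom{q-1}{\tau-1}$ followed by extending the partial sum of order statistics to the full sum).

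There is, however, one genuine gap, precisely at the step you yourself flag as the anticipated obstacle. The greedy expectation is \emph{not} the uniform average of the top $q-\tau+1$ order statistics; it is the weighted average $\tfrac{1}{\binom{q}{\tau}}\sum_{j=0}^{q-\tau}\binom{\tau-1+j}{\tau-1}f_{\underline{\mathbf i_j}}(x_k)$ with non-uniform weights. Your justification of the lower bound (``the greedy rule puts zero weight on the smallest $\tau-1$ entries, so discarding them loses nothing beyond the factor $\bar q_k$'') implicitly treats it as the uniform average over the surviving entries, which is not yet an argument. The missing ingredient is that the weights $\binom{\tau-1+j}{\tau-1}$ are \emph{increasing} in $j$ while the sorted values $f_{\underline{\mathbf i_j}}(x_k)$ are also increasing, so a Chebyshev-sum-type inequality (this is exactly Lemma \ref{lem:skmseq} in the appendix, quoted from \citet{haddock:2017}) shows the weighted average dominates the unweighted average $\tfrac{1}{q-\tau+1}\sum_{j=0}^{q-\tau}f_{\underline{\mathbf i_j}}(x_k)$. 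Only after that step does your $s_k$-versus-$(\tau-1)$ bookkeeping take over: if $s_k\ge\tau-1$ the discarded entries are all zero and the partial sum equals the full sum, while if $s_k<\tau-1$ the average of the top $q-\tau+1$ entries dominates the average of the top $q-s_k$ (nonzero) entries, yielding the factor $\min\{1,\tfrac{q-\tau+1}{q-s_k}\}$ and hence $\bar q_k(\tau)=\max\{q-s_k,q-\tau+1\}$. With Lemma \ref{lem:skmseq} supplied, your proof closes; without it, the central inequality $\E[f_i(x_k)]\ge\tfrac{1}{2\bar q_k(\tau)}r_k^T\mathbf{T}r_k$ is asserted rather than proved.
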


\begin{remark}
Theorem \ref{lem:mu} suggests that the function $ f(x_k)$ (with the proposed sketching rules) has Lipschitz continuous gradient and strong convexity constant in the line segment $[x_k,x^*]$. Take, $x^*$ such that $Ax^* = b$ holds, then we have $f^* = f(x^*) = 0$ and $\nabla f(x^*) = 0 $. Therefore, the above-mentioned Lemmas can be restated as follows:
\begin{align}
    & \frac{\epsilon_1}{ 2} \|x_k-x^*\|^2_G  \ \leq \ f(x_k) - f^* - \langle \nabla f(x^*),x_k-x^* \rangle  \ \leq \  \frac{\epsilon_2}{2}\ \|x_k-x^*\|^2_G. \label{eq:strongm}
\end{align}
for some $\epsilon_1, \ \epsilon_2 > 0$. Furthermore, the following identity holds:
\begin{align}
\label{eq:rsi}
    \langle \E[\nabla^G f_i(x_k) \ | \ i \sim \mathcal{R}], x_k-x^* \rangle_G = 2 \E[f_{i}(x_k) \ | \ i \sim \mathcal{R}]\geq \epsilon_1 \|x_k-x^*\|^2_G.
\end{align}
for some generic sketching rule $\mathcal{R}$ (see Theorem \ref{lem:mu}).
\end{remark}

\begin{theorem}
\label{th:jm2}
Assume, $x^*$ is a solution of problem \eqref{jml2} and $x_k$ is a random iterate. Then, the following identities hold:
\begin{align}
    & 1 \leq \frac{(r_k^TT_{i_k}r_k)(r_k^T T_{i_k}^3 r_k)}{(r_k^T T_{i_k}^2 r_k)^2 }  \leq 1+ \frac{[\mu_2(i_k)-\mu_1(i_k)]^2}{4 [\mu_1^+(i_k)]^2} \leq 1+ \frac{\left(\sigma_{i_k}\right)^2}{4} \label{t:5}  \\
    & \textbf{If} \ \ T_{i_k} \succ 0 \ \ \forall i_k 	\Rightarrow \frac{(r_k^Tr_k)(r_k^T T_{i_k}^3 r_k)}{(r_k^T T_{i_k} r_k)(r_k^T T_{i_k}^2 r_k) }  \leq  \frac{[\mu_2(i_k)+\mu_1(i_k)]^2}{4 \mu_1(i_k) \mu_2(i_k)} =  \frac{\left(1+ \sigma_{i_k}\right)^2}{4\sigma_{i_k}} \label{t:6} \\
    & \textbf{If} \ \ T_{i_k} \succ 0 \ \ \forall i_k 	\Rightarrow \frac{(r_k^TT_{i_k}r_k)(r_k^T T_{i_k}^3 r_k)}{(r_k^T T_{i_k}^2 r_k)^2 }  \leq  \frac{[\mu_2(i_k)+\mu_1(i_k)]^2}{4 \mu_1(i_k) \mu_2(i_k)} =  \frac{\left(1+ \sigma_{i_k}\right)^2}{4\sigma_{i_k}} \label{t:7} 
\end{align}
\end{theorem}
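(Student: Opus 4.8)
The plan is to reduce all three inequalities to one-variable or two-variable bounds over the spectrum of the symmetric PSD matrix $T_{i_k}$, and then exploit the Kantorovich/Polya--Szeg\H{o} family of inequalities. Fix $i = i_k$ and diagonalize $T_{i}$; write $r_k$ in the eigenbasis and let $p_j \geq 0$ be the squared coefficients, so that $r_k^T T_i^s r_k = \sum_j \lambda_j^s p_j$ for each integer power $s$, where the $\lambda_j$ range over the eigenvalues of $T_i$. Only the nonzero eigenvalues contribute to $r_k^T T_i r_k$, $r_k^T T_i^2 r_k$, $r_k^T T_i^3 r_k$ (since a zero eigenvalue kills every positive power), so after normalizing $\sum_{\lambda_j>0} p_j = 1$ we may regard $\lambda$ as a random variable supported in $[\mu_1^+(i),\mu_2(i)]$ with $\E[\lambda^s] = r_k^T T_i^{s}r_k / r_k^T r_k$ up to the common factor. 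In this language, \eqref{t:5} is the statement $1 \leq \E[\lambda]\,\E[\lambda^3] / \E[\lambda^2]^2 \leq 1 + \sigma_i^2/4$ after substituting $\lambda \mapsto \lambda$ weighted by $\lambda$ (i.e. change measure to $d\nu = \lambda\, d\mu / \E[\lambda]$ so that $\E_\nu[\lambda] = \E[\lambda^2]/\E[\lambda]$, $\E_\nu[\lambda^2] = \E[\lambda^3]/\E[\lambda]$), which turns the middle term into $\E_\nu[\lambda^2]/\E_\nu[\lambda]^2 = 1 + \mathrm{Var}_\nu(\lambda)/\E_\nu[\lambda]^2$.

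For \eqref{t:5}: the left inequality $\ge 1$ is immediate from Cauchy--Schwarz, $\E_\nu[\lambda]^2 \le \E_\nu[\lambda^2]$. For the right inequality I would use the Bhatia--Davis (or Popoviciu) variance bound: for a random variable supported in $[a,b]$ with mean $m$, $\mathrm{Var} \le (b-m)(m-a) \le (b-a)^2/4$. Since $\nu$ is supported in $[\mu_1(i),\mu_2(i)]$ (the full spectrum, including possibly the zero eigenvalue if we had not restricted — but the $\lambda$-reweighting removes it, so the support is really in $[\mu_1^+(i),\mu_2(i)]$; the statement is phrased with $\mu_1(i)$ which is $\le \mu_1^+(i)$, so the bound is only weakened), we get $\mathrm{Var}_\nu(\lambda) \le [\mu_2(i)-\mu_1(i)]^2/4$, and dividing by $\E_\nu[\lambda]^2 \ge [\mu_1^+(i)]^2$ gives the stated $1 + [\mu_2(i)-\mu_1(i)]^2/(4[\mu_1^+(i)]^2)$; the final bound $\le 1 + \sigma_i^2/4$ follows since $\mu_2(i)-\mu_1(i) \le \mu_2(i)$ and $\sigma_i = \mu_2(i)/\mu_1^+(i)$.

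For \eqref{t:6} and \eqref{t:7}, when $T_i \succ 0$ the support is $[\mu_1(i),\mu_2(i)]$ with $\mu_1(i) = \mu_1^+(i) > 0$. Inequality \eqref{t:7} is the classical Kantorovich inequality applied to $\nu$: $\E_\nu[\lambda^2]\,\E_\nu[\lambda^{-0}]$ — more precisely, writing $u = \E_\nu[\lambda]$, $\E_\nu[\lambda^2]/u^2 = 1 + \mathrm{Var}_\nu/u^2$ and then bounding via the quadratic $(\mu_2(i)-\lambda)(\lambda-\mu_1(i)) \ge 0$ whose expectation under $\nu$ gives $\E_\nu[\lambda^2] \le (\mu_1(i)+\mu_2(i))u - \mu_1(i)\mu_2(i)$; optimizing $[(\mu_1+\mu_2)u - \mu_1\mu_2]/u^2$ over $u \in [\mu_1(i),\mu_2(i)]$ yields the maximum $(\mu_1(i)+\mu_2(i))^2/(4\mu_1(i)\mu_2(i))$, which is exactly $(1+\sigma_i)^2/(4\sigma_i)$. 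For \eqref{t:6}, the ratio is $\frac{\E[\lambda^0]\E[\lambda^3]}{\E[\lambda]\E[\lambda^2]}$ in the original (unweighted) measure $\mu$; reweighting by $\lambda$ as before this becomes $\frac{\E_\nu[\lambda^{-1}]\E_\nu[\lambda^2]}{\E_\nu[\lambda]}$ — hmm, more carefully $\E[\lambda^3]/\E[\lambda^2] = \E_\nu[\lambda^2]/\E_\nu[\lambda]$ and $\E[\lambda^0]/\E[\lambda] = \E_\nu[\lambda^{-1}]$, so \eqref{t:6} reads $\E_\nu[\lambda^{-1}]\,\E_\nu[\lambda^2]/\E_\nu[\lambda] \le (1+\sigma_i)^2/(4\sigma_i)$; this follows by combining the Kantorovich bound $\E_\nu[\lambda]\E_\nu[\lambda^{-1}] \le (\mu_1(i)+\mu_2(i))^2/(4\mu_1(i)\mu_2(i))$ with $\E_\nu[\lambda^2] \le \mu_2(i)\E_\nu[\lambda]$ (since $\lambda \le \mu_2(i)$) — but that overshoots; instead I would directly use the Polya--Szeg\H{o} inequality $\E[\lambda^2]\E[\lambda^0] \le \frac{(\mu_1+\mu_2)^2}{4\mu_1\mu_2}\E[\lambda]^2$ in measure $\mu$ and divide through appropriately. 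The cleanest route is: apply the general Polya--Szeg\H{o} / Cassels inequality, which states for positive weights and values in $[a,b]$ that $\big(\sum w_j x_j^2\big)\big(\sum w_j y_j^2\big) \le \frac{(a+b)^2}{4ab}\big(\sum w_j x_j y_j\big)^2$ with $a = \min x_j/y_j$, $b = \max x_j/y_j$; choosing $w_j = p_j \lambda_j$, $x_j = \lambda_j$, $y_j = 1$ gives exactly \eqref{t:7}, and $w_j = p_j$, $x_j = \lambda_j^{3/2}$, $y_j = \lambda_j^{1/2}$ gives \eqref{t:6}.

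The main obstacle will be the bookkeeping with the zero eigenvalue (Lemma \ref{lem:range} guarantees $r_k \in \textbf{Range}(\mathbf{T})$, but individual $T_{i_k}$ may still be singular), which is precisely why \eqref{t:6} and \eqref{t:7} carry the hypothesis $T_{i_k} \succ 0$ while \eqref{t:5} does not; and I need to make sure the $\lambda$-reweighting that kills the kernel is applied consistently. Once the problem is phrased as a moment inequality for a scalar random variable on an interval, everything reduces to Cauchy--Schwarz (for the lower bound of \eqref{t:5}) and the Kantorovich/Polya--Szeg\H{o}/Bhatia--Davis inequalities (for the upper bounds), all of which are standard one-line consequences of the nonnegativity of $(\mu_2(i)-\lambda)(\lambda-\mu_1(i))$ integrated against suitable weights.
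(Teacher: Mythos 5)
Your treatment of \eqref{t:5} and \eqref{t:7} is correct and is essentially the paper's own argument transplanted into the eigenbasis of $T_{i_k}$: the paper's device $\mathcal{H}(r_k)=(\mu_1+\mu_2)\,r_k^TT_{i_k}r_k-r_k^TT_{i_k}^2r_k-\mu_1\mu_2\|r_k\|^2\ge 0$ is exactly the expectation of $(\mu_2-\lambda)(\lambda-\mu_1)\ge 0$, i.e.\ your Bhatia--Davis/Popoviciu step, and the paper's application of Lemma \ref{lem:strang} with $u=v=T_{i_k}r_k$ is the one-vector Kantorovich/Cassels inequality you invoke for \eqref{t:7}. Your $\lambda$-reweighting to handle the kernel is consistent with the paper's use of $\mu_1^+(i_k)$.

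The genuine gap is \eqref{t:6}, and you half-acknowledge it. Your final Cassels assignment $w_j=p_j$, $x_j=\lambda_j^{3/2}$, $y_j=\lambda_j^{1/2}$ yields $\E[\lambda]\,\E[\lambda^3]\le K\,\E[\lambda^2]^2$, which is \eqref{t:7} again, not \eqref{t:6}; since the left side of \eqref{t:6} equals that of \eqref{t:7} multiplied by $\E[1]\,\E[\lambda^2]/\E[\lambda]^2\ge 1$ (Cauchy--Schwarz), \eqref{t:6} is strictly stronger and cannot be recovered this way, and your earlier route (``that overshoots'') also fails because it would need the reverse of Cauchy--Schwarz. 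In fact no argument can close this gap, because \eqref{t:6} is false: take $T_{i_k}=\mathrm{diag}(1,4)$ and $r_k=(\sqrt{3/5},\sqrt{2/5})^T$, so that $r_k^Tr_k=1$, $r_k^TT_{i_k}r_k=11/5$, $r_k^TT_{i_k}^2r_k=7$, $r_k^TT_{i_k}^3r_k=131/5$; the left side of \eqref{t:6} is then $26.2/15.4\approx 1.701$ while the claimed bound is $25/16=1.5625$. The paper's own proof is unsound here for the same underlying reason: it applies Lemma \ref{lem:strang} to two distinct vectors $u=T_{i_k}r_k$, $v=T_{i_k}^{1/2}r_k$, but that lemma is false for general $u\ne v$ (take $u,v$ to be unit eigenvectors for $\mu_2(i_k)$ and $\mu_1(i_k)$: the left side equals $\sigma_{i_k}$, which exceeds $(1+\sigma_{i_k})^2/(4\sigma_{i_k})$ whenever $\sigma_{i_k}>1$); the Kantorovich constant is only valid when $u=v$, which is precisely case \eqref{t:7}. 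So your instinct was right, and the correct resolution is to weaken \eqref{t:6} (e.g.\ to the constant $\sigma_{i_k}$, which follows trivially from $r_k^TT_{i_k}^3r_k\le\mu_2(i_k)\,r_k^TT_{i_k}^2r_k$ and $r_k^TT_{i_k}r_k\ge\mu_1(i_k)\,r_k^Tr_k$) rather than to keep searching for a proof.
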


\begin{remark}
The results of Theorem \eqref{th:jm2} are quite similar to the so-called Kantorovich inequality. However, in this specific case we don't assume the positive definiteness of the respective operator. For our case we only have positive semi-definiteness. 
\end{remark}

\begin{theorem}
\label{th:ssd1}
The sequence $x_k$ generated by SSD algorithm satisfies the following:
\begin{align*}
  \big \|\E[x_{k+1}-x^*] \big \|_{G}^2 \leq \lambda_{\max}^2\left(I - \omega \E\left[\alpha_{i_k}  T_{i_k} \ | \ i_k \sim \mathcal{R}\right]  \right) \big \|\E[x_{k}-x^*] \big \|_{G}^2 
\end{align*}
\end{theorem}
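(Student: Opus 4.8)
The plan is to track the evolution of the expected error vector $\E[x_k - x^*]$ directly, using the linearity of expectation and the update rule of the SSD algorithm. First I would subtract $x^*$ from both sides of the SSD update $x_{k+1} = x_k - \omega \alpha_{i_k} \nabla^G f_i(x_k)$ and rewrite $\nabla^G f_i(x_k) = G^{-1} Z_{i_k}(x_k - x^*)$ using \eqref{jml7} and the identity $Z_i = A^T H_i A$. Multiplying through by $G^{1/2}$ and recalling the definitions $r_k = G^{1/2}(x_k - x^*)$ and $T_i = G^{-1/2} Z_i G^{-1/2}$ from Definition~\ref{jml17}, I would obtain the recursion $r_{k+1} = (I - \omega \alpha_{i_k} T_{i_k}) r_k$. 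This expresses the new (scaled) error as a random linear operator applied to the old error.

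Next I would take expectation. The subtlety here is that $\alpha_{i_k}$ depends on $r_k$ (it is the steepest-descent stepsize from \eqref{jml13}), so $\alpha_{i_k} T_{i_k}$ and $r_k$ are not independent when conditioning only on the past. However, the statement concerns $\|\E[x_{k+1}-x^*]\|_G^2$, and the cleanest route is to use the tower property conditioning on $x_k$: $\E[r_{k+1} \mid x_k] = (I - \omega \E[\alpha_{i_k} T_{i_k} \mid i_k \sim \mathcal{R}]) r_k$, where the inner expectation is only over the sketch choice with $x_k$ fixed — but $\alpha_{i_k}$ still depends on $r_k$ through its Rayleigh-quotient form. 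I would therefore argue that it suffices to bound the operator norm: from $\E[r_{k+1}] = \E[(I - \omega\alpha_{i_k}T_{i_k}) r_k]$ and iterating, one shows $\|\E[r_{k+1}]\| \le \|\E[I - \omega \alpha_{i_k} T_{i_k}]\|_{\mathrm{op}} \, \|\E[r_k]\|$ provided the relevant operators commute in expectation or one passes to the symmetric PSD setting; since all $T_{i_k}$ are symmetric PSD and $\alpha_{i_k} \in [1/\mu_2(i_k), 1/\mu_1^+(i_k)]$ by \eqref{t:4}, the matrix $I - \omega \alpha_{i_k} T_{i_k}$ is symmetric, hence $\|\E[\,\cdot\,]\|_G^2 \le \lambda_{\max}^2(\E[I - \omega\alpha_{i_k}T_{i_k}]) \|\E[r_k]\|_G^2$. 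Translating back through $\|r_k\| = \|x_k - x^*\|_G$ gives the claim.

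The key steps in order: (1) rewrite the SSD update in terms of $r_k$ and $T_{i_k}$ to get $r_{k+1} = (I - \omega\alpha_{i_k}T_{i_k})r_k$; (2) take conditional expectation over the sketch rule $\mathcal{R}$, obtaining $\E[r_{k+1}] = (I - \omega \E[\alpha_{i_k}T_{i_k}])\,\E[r_k]$ after handling the conditioning carefully; (3) use symmetry/positive-semidefiniteness of the averaged operator to pass to the spectral norm bound $\|\E[r_{k+1}]\|^2 \le \lambda_{\max}^2(I - \omega\E[\alpha_{i_k}T_{i_k}])\|\E[r_k]\|^2$; (4) rewrite in the $G$-norm.

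The main obstacle I anticipate is step (2): making rigorous the interchange that lets one pull $\E[\alpha_{i_k}T_{i_k}]$ out as a fixed operator acting on $\E[r_k]$, given that $\alpha_{i_k}$ is itself a function of the current iterate. I expect the resolution is that the theorem is stated for a generic (possibly data-independent) sketching rule and the stepsize, while a function of $r_k$, enters only through the bound — or more likely the paper treats $\alpha_{i_k}$ as effectively the sketch-dependent quantity and invokes a standard argument (as in Loizou--Richtárik) that the spectral contraction factor governs $\|\E[x_k - x^*]\|_G$. I would lean on the bounds from Lemma~\ref{lem:jm1}, particularly \eqref{t:4}, to control $\omega \alpha_{i_k} T_{i_k}$ and ensure the averaged operator $I - \omega \E[\alpha_{i_k}T_{i_k}]$ has spectral radius strictly below $1$ on $\textbf{Range}(\mathbf{T})$ (using Lemma~\ref{lem:range} to confine $r_k$ to that subspace), which is what ultimately makes the recursion a genuine contraction.
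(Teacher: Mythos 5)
Your proposal follows essentially the same route as the paper: rewrite the update as $r_{k+1} = (I - \omega\alpha_{i_k}T_{i_k})r_k$, take conditional expectation over the sketch index, pull the averaged operator out of the outer expectation, and bound via the squared largest eigenvalue of the symmetric matrix $I - \omega\E[\alpha_{i_k}T_{i_k}]$ after passing from the $G$-norm to the Euclidean norm. The subtlety you flag in step (2) --- that $\alpha_{i_k}$ is a Rayleigh-quotient function of $r_k$, so $\E_{i_k}[\alpha_{i_k}T_{i_k}]$ is not a deterministic matrix that can legitimately be pulled outside the outer expectation $\E[\cdot]$ --- is a genuine issue, but the paper's own proof performs exactly this interchange without comment, so your treatment is no less rigorous than the published argument and you deserve credit for identifying the gap explicitly.
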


\begin{remark}
Since, for any random vector $x \in \R^n$, we have the following:
\begin{align}
  \|\E[x-x^*] \big \|_{G}^2 = \E \left[\|x- x^*\|_G^2\right] - \E \left[\big \|x- \E[x] \big \|_G^2\right]
\end{align}
We will provide the convergence of the term $\E \left[\|x- x^*\|_G^2\right]$ in Theorem \ref{th:jm4}.
\end{remark}

\begin{theorem}
\label{th:jm4}
If $ 0 < \omega <2$, then $x_k$ converges and the following results hold:
    \begin{align*}
    & T_{i_k} \succeq 0  \Rightarrow \ \E\left[\|x_{k+1}-x^* \|_{G}^2 \right]  \leq \left\{1- \frac{(2 \omega - \omega^2) \lambda^+_1}{\mu_2} \right\} \E\left[\|x_{k}-x^* \|_{G}^2 \right] \\
  & T_{i_k} \succ 0 \Rightarrow \ \E\left[\|x_{k+1}-x^* \|_{G}^2 \right]  \leq \left\{1- (2 \omega - \omega^2) \E \left[ \frac{ 4\sigma_{i} }{\left(1+ \sigma_{i}\right)^2} \right]\right\} \E\left[\|x_{k}-x^* \|_{G}^2 \right]
\end{align*}
Also the average iterate $\Tilde{x}_k = \sum \nolimits_{l=0}^{k-1} x_l$ for all $k \geq 1$ satisfies the following
\begin{align*}
    \E[\|\Tilde{x}_k-x^*\|^2_G]  \leq \frac{\mu_2 \|x_0-x^*\|^2_G }{ \omega k \lambda_1^+(2-\omega)}.
\end{align*}
\end{theorem}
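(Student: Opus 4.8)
The plan is to work in the transformed error coordinates $r_k = G^{\frac12}(x_k-x^*)$ and derive a sharp one‑step descent inequality for $\|r_k\|^2 = \|x_k-x^*\|_G^2$. First I would rewrite the SSD update \eqref{jml13}: since $\nabla^G f_i(x_k) = G^{-1}Z_{i_k}(x_k-x^*)$ and $T_{i_k}=G^{-\frac12}Z_{i_k}G^{-\frac12}$, the update becomes $r_{k+1} = (I-\omega\alpha_{i_k}T_{i_k})r_k$. Using $Z_{i_k}G^{-1}Z_{i_k}=G^{\frac12}T_{i_k}^2G^{\frac12}$ and $Z_{i_k}G^{-1}Z_{i_k}G^{-1}Z_{i_k}=G^{\frac12}T_{i_k}^3G^{\frac12}$, the adaptive step length is $\alpha_{i_k} = (r_k^TT_{i_k}^2r_k)/(r_k^TT_{i_k}^3r_k)$. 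Abbreviating $a=r_k^TT_{i_k}r_k$, $b=r_k^TT_{i_k}^2r_k$, $c=r_k^TT_{i_k}^3r_k$ (so $\alpha_{i_k}=b/c$, with $c>0$ whenever $\|x_k-x^*\|_{Z_{i_k}}>0$, the only case considered), expanding $\|r_{k+1}\|^2 = r_k^T(I-\omega\alpha_{i_k}T_{i_k})^2r_k$ and factoring yields
\begin{align*}
\|r_{k+1}\|^2 = \|r_k\|^2 - \frac{ab}{c}\left(2\omega - \omega^2\,\frac{b^2}{ac}\right).
\end{align*}

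Next I would invoke the Kantorovich‑type bound \eqref{t:5}, which gives $ac/b^2 \ge 1$, hence $b^2/(ac)\le 1$ and $2\omega - \omega^2 b^2/(ac) \ge 2\omega - \omega^2 = \omega(2-\omega) > 0$ for $\omega\in(0,2)$. This produces the universal per‑step decrease
\begin{align*}
\|r_{k+1}\|^2 \le \|r_k\|^2 - \omega(2-\omega)\,\frac{ab}{c}.
\end{align*}
For the positive semidefinite case I would lower bound $ab/c$ via \eqref{t:2} in the form $c\le\mu_2(i_k)b\le\mu_2 b$, so $ab/c\ge a/\mu_2 = (r_k^TT_{i_k}r_k)/\mu_2$; taking $\E_{i_k}[\cdot]$, applying \eqref{t:3} ($r_k^T\E[T]r_k\ge\lambda_1^+\|r_k\|^2$) and then the tower rule gives the first claimed rate, with the contraction factor in $[0,1)$ because $\lambda_1^+\le\lambda_2\le\mu_2$ and $\omega(2-\omega)\le1$ (Theorem \ref{lem:mu}, Lemma \ref{lem:jm1}); in particular $x_k\to x^*$. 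For the positive definite case I would instead use the sharper bound \eqref{t:6}, i.e.\ $\|r_k\|^2\,c \le \frac{(1+\sigma_{i_k})^2}{4\sigma_{i_k}}\,ab$, so $ab/c \ge \frac{4\sigma_{i_k}}{(1+\sigma_{i_k})^2}\|r_k\|^2$, and again take $\E_{i_k}[\cdot]$.

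For the sublinear (Cesàro) estimate I would not iterate the contraction but telescope the decrease. Starting from $\omega(2-\omega)\,ab/c \le \|r_k\|^2-\|r_{k+1}\|^2$, using $ab/c\ge a/\mu_2$ together with $\E_{i_k}[a]=r_k^T\E[T]r_k\ge\lambda_1^+\|r_k\|^2$, taking full expectations and summing over $k=0,\dots,K-1$ telescopes the right-hand side to at most $\|x_0-x^*\|_G^2$, hence $\sum_{k=0}^{K-1}\E[\|x_k-x^*\|_G^2]\le \frac{\mu_2\|x_0-x^*\|_G^2}{\omega(2-\omega)\lambda_1^+}$. Convexity of $\|\cdot\|_G^2$ and Jensen's inequality applied to $\tilde{x}_K = \frac1K\sum_{k=0}^{K-1}x_k$ then give $\E[\|\tilde{x}_K-x^*\|_G^2]\le\frac1K\sum_{k=0}^{K-1}\E[\|x_k-x^*\|_G^2]$, which is the stated bound since $2\omega-\omega^2=\omega(2-\omega)$.

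The main obstacle I anticipate is the first step: correctly expressing the adaptive step size $\alpha_{i_k}$ through the cubic forms $r_k^TT_{i_k}^2r_k$ and $r_k^TT_{i_k}^3r_k$, and then massaging $\|r_{k+1}\|^2$ into the factored form that isolates $2\omega-\omega^2$ — this is where the Kantorovich‑type inequalities of Theorem \ref{th:jm2} and Lemma \ref{lem:jm1} do the real work, and where one must check that all quadratic forms are positive so the ratios are well defined (Lemma \ref{lem:range} ensures $r_k\in\textbf{Range}(\mathbf{T})$). Once that algebra is set up, the remaining steps are routine applications of the already‑established bounds.
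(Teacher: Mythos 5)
Your proposal is correct and follows essentially the same route as the paper's proof: the same reduction to $r_{k+1}=(I-\omega\alpha_{i_k}T_{i_k})r_k$, the same factoring that isolates $2\omega-\omega^2\tfrac{b^2}{ac}$ controlled by \eqref{t:5}, the bound $ab/c\ge a/\mu_2$ plus \eqref{t:3} for the semidefinite case, the Kantorovich bound \eqref{t:6} for the definite case, and a telescoped sum plus Jensen for the Cesàro estimate (the paper merely routes the telescoping through $\sum\E[f(x_l)]$ before converting back via $\lambda_1^+\|r_l\|^2\le 2f(x_l)$, which is the same chain of inequalities rearranged).
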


\begin{remark}
\label{rem:ssd}
Consider the special case $G = B$. In that scenario, we have 
$\alpha_{i_k} = 1$ and $T^2_{i_k} = T_{i_k}$. Using these values in the first part of Theorem \ref{th:jm4}, we get the following:
\begin{align}
\E\left[\|x_{k+1}-x^* \|_{G}^2 \right]  \leq \left\{1- (2 \omega - \omega^2) \lambda^+_1 \right\} \E\left[\|x_{k}-x^* \|_{G}^2 \right] 
\end{align}
This is the result obtained by \citet{richtrik2017stochastic} for the special case of $G = B$. 
\end{remark}

\begin{theorem}
\label{th:ssd3}
If $ 0 < \omega <2$, then the following result holds for the function decay:
 \begin{align*}
& T_{i_k} \succeq 0  \Rightarrow \   \E \left[\frac{f_{i_k}(x_{k+1})}{f_{i_k}(x_{k})}\right]=  \E \left[\frac{\|x_{k+1}-x^*\|_{Z_{i_k}}^2}{\|x_{k}-x^*\|_{Z_{i_k}}^2}\right]  \leq 1-  \frac{4 (2 \omega - \omega^2) }{\min \{4 \E[\sigma_i], 4 + \E[\sigma_i^2]\}} \\
&  T_{i_k} \succ 0  \Rightarrow \  \E \left[\frac{f_{i_k}(x_{k+1})}{f_{i_k}(x_{k})}\right]=  \E \left[\frac{\|x_{k+1}-x^*\|_{Z_{i_k}}^2}{\|x_{k}-x^*\|_{Z_{i_k}}^2}\right] \leq 1- (2 \omega - \omega^2) \E \left[ \frac{ 4\sigma_{i} }{\left(1+ \sigma_{i}\right)^2} \right]
\end{align*}
Moreover, we have the following:
\begin{align*}
& T_{i_k} \succeq 0  \Rightarrow \ \E\left[f(x_{k+1}) \right]  \leq \frac{\lambda_2}{2} \left\{1- \frac{(2 \omega - \omega^2) \lambda^+_1}{\mu_2} \right\}^k \|x_0-x^* \|_{G}^2 \\
& T_{i_k} \succ 0 \Rightarrow \ \E\left[f(x_{k}) \right]  \leq \frac{\lambda_2}{2} \left\{1- (2 \omega - \omega^2) \E \left[ \frac{ 4\sigma_{i} }{\left(1+ \sigma_{i}\right)^2} \right]\right\}^k \|x_0-x^* \|_{G}^2
\end{align*}
Furthermore,  the average iterate $\Tilde{x}_k = \sum \nolimits_{l=0}^{k-1} x_l$ for all $k \geq 1$ satisfies the following
\begin{align*}
 \E[f(\Tilde{x}_k)] \leq  \frac{\mu_2 \|x_0-x^*\|^2_G }{2\omega k (2-\omega)}.
\end{align*}
\end{theorem}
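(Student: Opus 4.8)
The plan is to reduce everything to tracking the scalar quadratic forms $r_k^T T_{i_k}^j r_k$ ($j=1,2,3$) and then invoking the Kantorovich-type estimates of Theorem~\ref{th:jm2} together with the eigenvalue sandwiches of Lemma~\ref{lem:jm1}. First I would rewrite the SSD step in the $r$-coordinates: since $\nabla^G f_i(x_k)=G^{-1}Z_{i_k}(x_k-x^*)=G^{-1/2}T_{i_k}r_k$, one has $\|\nabla^G f_i(x_k)\|_G^2=r_k^T T_{i_k}^2 r_k$ and $\|\nabla^G f_i(x_k)\|_{Z_{i_k}}^2=r_k^T T_{i_k}^3 r_k$, hence $\alpha_{i_k}=\tfrac{r_k^T T_{i_k}^2 r_k}{r_k^T T_{i_k}^3 r_k}$ and $r_{k+1}=r_k-\omega\alpha_{i_k}T_{i_k}r_k$. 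Substituting into $f_{i_k}(x_{k+1})=\tfrac12 r_{k+1}^T T_{i_k}r_{k+1}=\tfrac12\|x_{k+1}-x^*\|^2_{Z_{i_k}}$ and expanding gives the exact one-step identity
\[
\frac{f_{i_k}(x_{k+1})}{f_{i_k}(x_k)}
=\frac{\|x_{k+1}-x^*\|^2_{Z_{i_k}}}{\|x_k-x^*\|^2_{Z_{i_k}}}
=1-(2\omega-\omega^2)\,\frac{(r_k^T T_{i_k}^2 r_k)^2}{(r_k^T T_{i_k} r_k)(r_k^T T_{i_k}^3 r_k)},
\]
on the event $\|x_k-x^*\|_{Z_{i_k}}>0$ (otherwise $x_{k+1}=x_k$ and the ratio is trivial). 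If $T_{i_k}\succ0$, I would bound the fraction from below via \eqref{t:7} by $\tfrac{4\sigma_{i_k}}{(1+\sigma_{i_k})^2}$ and take $\E_i[\cdot]$ — the bound depending only on the sketch, not on $x_k$ — to get the second ratio estimate. If only $T_{i_k}\succeq0$, I would produce two lower bounds on the same fraction: \eqref{t:5} gives $\ge\tfrac{4}{4+\sigma_{i_k}^2}$, while \eqref{t:1}--\eqref{t:2} give $(r_k^T T_{i_k}r_k)(r_k^T T_{i_k}^3 r_k)\le\sigma_{i_k}(r_k^T T_{i_k}^2 r_k)^2$, i.e.\ $\ge\tfrac{1}{\sigma_{i_k}}$; taking the pointwise maximum, then the expectation, then $\E[\max\{a,b\}]\ge\max\{\E a,\E b\}$, and finally Jensen applied to the convex maps $t\mapsto1/t$ and $t\mapsto4/(4+t)$ yields $\E_i[\max\{\cdot\}]\ge\tfrac{4}{\min\{4\E[\sigma_i],4+\E[\sigma_i^2]\}}$, which is the first ratio estimate.

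For the linear decay of $f(x_k)$, I would use \eqref{t:3} in the form $f(x)=\tfrac12 r^T\E[T]r\le\tfrac{\lambda_2}{2}\|x-x^*\|_G^2$, so that $\E[f(x_k)]\le\tfrac{\lambda_2}{2}\E[\|x_k-x^*\|_G^2]$, and then simply iterate the corresponding linear-rate bound from Theorem~\ref{th:jm4} (using $(1-c)^{k+1}\le(1-c)^k$ for $c\in(0,1)$ to absorb the harmless index shift in the $\succeq0$ statement). This gives both displayed function rates, with $c=\tfrac{(2\omega-\omega^2)\lambda_1^+}{\mu_2}$ in the semidefinite case and $c=(2\omega-\omega^2)\E[\tfrac{4\sigma_i}{(1+\sigma_i)^2}]$ in the definite case.

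For the Cesàro bound I would first record a telescoping-friendly per-step inequality. From $r_{k+1}=r_k-\omega\alpha_{i_k}T_{i_k}r_k$ one gets $\|r_k\|^2-\|r_{k+1}\|^2=\omega\alpha_{i_k}(2r_k^T T_{i_k}r_k-\omega\alpha_{i_k}r_k^T T_{i_k}^2 r_k)$; Cauchy--Schwarz (the lower bound in \eqref{t:5}) shows $\alpha_{i_k}r_k^T T_{i_k}^2 r_k\le r_k^T T_{i_k}r_k$, so the right-hand side is $\ge(2\omega-\omega^2)\alpha_{i_k}r_k^T T_{i_k}r_k\ge\tfrac{2\omega-\omega^2}{\mu_2}r_k^T T_{i_k}r_k$ by $\alpha_{i_k}\ge1/\mu_2$ from \eqref{t:4}. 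Taking the conditional expectation over $i_k$ and using $r_k^T\E[T]r_k=2f(x_k)$ gives $\E_k[\|r_{k+1}\|^2]\le\|r_k\|^2-\tfrac{2(2\omega-\omega^2)}{\mu_2}f(x_k)$; summing over $l=0,\dots,k-1$, taking full expectations, and dropping the nonnegative term $\E[\|r_k\|^2]$ yields $\sum_{l=0}^{k-1}\E[f(x_l)]\le\tfrac{\mu_2\|x_0-x^*\|_G^2}{2\omega(2-\omega)}$. Finally, since $f$ is a convex quadratic, Jensen gives $f(\tilde x_k)=f\!\big(\tfrac1k\sum_{l=0}^{k-1}x_l\big)\le\tfrac1k\sum_{l=0}^{k-1}f(x_l)$, and taking expectations produces the claimed $\mathcal O(1/k)$ rate.

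The computations are routine once the one-step identity is in hand; the part I expect to be the main obstacle is the $T_{i_k}\succeq0$ branch of the ratio bound — one has to notice that neither Kantorovich-type estimate alone is tight enough, combine them through a pointwise $\max$, and then push the $\max$ and Jensen through the expectation in the right order to land exactly on $\min\{4\E[\sigma_i],4+\E[\sigma_i^2]\}$. Arranging the per-step norm decrease so that precisely the factor $2\omega-\omega^2$ survives for the telescoping step is the other place that needs care.
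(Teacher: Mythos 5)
Your proposal is correct and follows essentially the same route as the paper: the exact one-step identity $f_{i_k}(x_{k+1})/f_{i_k}(x_k)=1-(2\omega-\omega^2)\tfrac{(r_k^TT_{i_k}^2r_k)^2}{(r_k^TT_{i_k}r_k)(r_k^TT_{i_k}^3r_k)}$, the Kantorovich-type bounds of Theorem~\ref{th:jm2} with the eigenvalue sandwich of Lemma~\ref{lem:jm1} for the two ratio estimates, the bound $f(x)\le\tfrac{\lambda_2}{2}\|x-x^*\|_G^2$ combined with Theorem~\ref{th:jm4} for the function decay, and the telescoped per-step inequality plus Jensen for the Cesàro rate. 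The only cosmetic difference is that you re-derive the summable inequality $\E_{i_k}[\|r_{k+1}\|^2]\le\|r_k\|^2-\tfrac{2(2\omega-\omega^2)}{\mu_2}f(x_k)$ from scratch and spell out the max/expectation/Jensen order more explicitly than the paper does, which if anything makes the $\min\{4\E[\sigma_i],4+\E[\sigma_i^2]\}$ step cleaner.
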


\begin{remark}
\label{rem:specialresult}
Note that, the above Theorems are generalized results. As the constants varies form rules to rules, for different choices of sampling rules, we get the corresponding convergence results. In section \ref{sec:special}, we discuss some special algorithms, and their respective convergence results that can be obtained from the above Theorems.
\end{remark}

\begin{theorem}
\label{th:momentum1}
Choose $x_0 = x_1 \in \textbf{Range}(G^{-1}A^T)$. Let $x_k$ be the random iterate generated by the SSDM algorithm. Let, $x^*$ is the solution of problem \eqref{jml2} and $0 \leq \xi < \frac{\zeta  \mu^+_1}{\mu_2}$ such that the following quantities
\begin{align*}
    \phi_1 := 1 + 3 \gamma + 2 \gamma^2 -\frac{\left(\gamma  + 2- \omega-\zeta \right) \omega}{\mu_2} \lambda^+_1 , \quad \phi_2 := \gamma + 2 \gamma^2 + \frac{\left( \gamma  -\xi  \right) \omega}{\mu^+_1} \lambda_2
\end{align*}
satisfy the condition $\phi_1+ \phi_2 < 1$ for some $\gamma \geq \max \left\{\xi, \zeta-2+\omega\right\} $. Take, $0 \leq \delta = \max \{0, \frac{\xi \mu_2}{\zeta \mu^+_1}-\phi_1, \frac{-\phi_1+ \sqrt{\phi_1^2+4 \phi_2}}{2} \}$ and define the following Lyapunov function
\begin{align*}
     \mathcal{V}_{k} & \overset{\text{def}}{=}  \|r_k\|^2 + \delta \|r_{k-1}\|^2 + \frac{2 \zeta \omega}{\mu_2} \left[ f(x_{k-1})-f(x^*)\right] \\
     & = \|x_k-x^*\|^2_G + \delta \|x_{k-1}-x^*\|^2_G + \frac{2 \zeta \omega}{\mu_2} \left[ f(x_{k-1})-f(x^*)\right]
\end{align*}
Then, we have the following:
\begin{align*}
    \E\left[ \mathcal{V}_{k+1}\right] \leq \rho \E\left[ \mathcal{V}_{k}\right] \leq \cdots \leq \rho^k \E\left[ \mathcal{V}_{1}\right]   = \rho^k \left[(1+\delta)\|x_{0}-x^*\|^2_G+ \frac{2 \zeta \omega}{\mu_2} f(x_0)\right]
\end{align*}
where, $ 0 \leq \rho = \max \left\{\frac{\xi \mu_2}{\zeta \mu^+_1},\frac{\phi_1+\sqrt{\phi_1^2+ 4 \phi_2}}{2} \right\} < 1$,  and $\delta = \rho -\phi_1$. Furthermore, $\phi_1+ \phi_2  \leq \rho < 1$. Note that, $\E\left[ \mathcal{V}_{k}\right] \rightarrow 0$ implies $\E[\|x_k-x^*\|^2_G] \rightarrow 0$ and $\E\left[ f(x_{k-1})-f(x^*)\right] \rightarrow 0$.
\end{theorem}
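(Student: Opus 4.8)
The plan is a one–step Lyapunov contraction: conditioning on the filtration $\mathcal{F}_k$ generated by $x_0,\dots,x_k$, I would show $\E[\mathcal{V}_{k+1}\mid\mathcal{F}_k]\le\rho\,\mathcal{V}_k$ and then unroll and take total expectation. First pass to the error vector $r_k=G^{1/2}(x_k-x^*)$. Since $\nabla^G f_{i}(x_k)=G^{-1}Z_{i}(x_k-x^*)$, we have $G^{1/2}\nabla^G f_{i_k}(x_k)=T_{i_k}r_k$ and $\alpha_{i_k}=\tfrac{r_k^TT_{i_k}^2r_k}{r_k^TT_{i_k}^3r_k}$, so the SSDM recursion (with relaxation $\omega$, as in Table~\ref{tab:AK}) becomes $r_{k+1}=(1+\gamma)r_k-\gamma r_{k-1}-\widehat g_k$ with $\widehat g_k:=\omega\alpha_{i_k}T_{i_k}r_k$. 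The hypothesis $x_0=x_1\in\textbf{Range}(G^{-1}A^T)$ together with Lemma~\ref{lem:range} keeps every $r_k$ inside $\textbf{Range}(\mathbf{T})$, so the eigenvalue bounds for the $T_{i_k}$ on that subspace apply throughout the unrolling.

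\textbf{Toolkit.} Next I would assemble the pointwise estimates. The left (``Cauchy--Schwarz'') half of \eqref{t:5} gives $\|\widehat g_k\|^2\le\omega\langle r_k,\widehat g_k\rangle$. Because $2f_{i_k}(x_k)=r_k^TT_{i_k}r_k$ and $\tfrac1{\mu_2}\le\alpha_{i_k}\le\tfrac1{\mu_1^+}$ by \eqref{t:4}, one has $\tfrac{2\omega}{\mu_2}f_{i_k}(x_k)\le\langle r_k,\widehat g_k\rangle\le\tfrac{2\omega}{\mu_1^+}f_{i_k}(x_k)$; taking $\E_{i_k}[\cdot\mid\mathcal{F}_k]$ and invoking Theorem~\ref{lem:mu} sandwiches $\E_{i_k}[\langle r_k,\widehat g_k\rangle]$ between $\tfrac{\omega\lambda_1^+}{\mu_2}\|r_k\|^2$ and $\tfrac{\omega\lambda_2}{\mu_1^+}\|r_k\|^2$, and keeps the cheap identity $\E_{i_k}[\langle r_k,\widehat g_k\rangle]\ge\tfrac{2\omega}{\mu_2}[f(x_k)-f(x^*)]$ available. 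For the coupling term, Cauchy--Schwarz in the PSD form $\alpha_{i_k}T_{i_k}$ followed by Young and then $\alpha_{i_k}T_{i_k}\preceq\tfrac1{\mu_1^+}T_{i_k}$ yields $\langle r_{k-1},\widehat g_k\rangle\le\tfrac{\omega}{2\mu_1^+}\,r_{k-1}^TT_{i_k}r_{k-1}+\tfrac12\langle r_k,\widehat g_k\rangle$, whose conditional expectation is controlled by $\tfrac{\omega\lambda_2}{2\mu_1^+}\|r_{k-1}\|^2$ and, with the sampling averaged over $\mathcal{F}_k$, by $\tfrac{\omega}{\mu_1^+}[f(x_{k-1})-f(x^*)]$.

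\textbf{Assembling the one-step bound.} Then I would expand $\|r_{k+1}\|^2=\|(1+\gamma)r_k-\gamma r_{k-1}-\widehat g_k\|^2$, use $-2\langle r_k,r_{k-1}\rangle\le\|r_k\|^2+\|r_{k-1}\|^2$ (which produces exactly the leading parts $1+3\gamma+2\gamma^2$ and $\gamma+2\gamma^2$ of $\phi_1,\phi_2$), substitute the toolkit inequalities, and take $\E_{i_k}[\cdot\mid\mathcal{F}_k]$. Adding $\delta\|r_k\|^2+\tfrac{2\zeta\omega}{\mu_2}[f(x_k)-f(x^*)]$ to form $\E_{i_k}[\mathcal{V}_{k+1}\mid\mathcal{F}_k]$, I would split the (nonnegative) coefficient $2+\gamma-\omega$ of $-\E_{i_k}[\langle r_k,\widehat g_k\rangle]$ into a $\zeta$-share, used with the cheap lower bound to \emph{cancel} the fresh $\tfrac{2\zeta\omega}{\mu_2}[f(x_k)-f(x^*)]$, and a $(\gamma+2-\omega-\zeta)$-share, used with the $\tfrac{\omega\lambda_1^+}{\mu_2}\|r_k\|^2$ lower bound; nonnegativity of the latter share is precisely $\gamma\ge\zeta-2+\omega$. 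Likewise I would peel a $\xi$-fraction off the $r_{k-1}$ coupling block (legal since $\gamma\ge\xi$) and charge it against the $f(x_{k-1})$ slot of $\mathcal{V}_k$, which forces $\rho\ge\tfrac{\xi\mu_2}{\zeta\mu_1^+}$. The residue is $\E_{i_k}[\mathcal{V}_{k+1}\mid\mathcal{F}_k]\le(\phi_1+\delta)\|r_k\|^2+\phi_2\|r_{k-1}\|^2+\rho\,\tfrac{2\zeta\omega}{\mu_2}[f(x_{k-1})-f(x^*)]$. Imposing $\phi_1+\delta\le\rho$ and $\phi_2\le\rho\delta$ and setting $\delta=\rho-\phi_1$ turns the second into $\rho^2-\phi_1\rho-\phi_2\ge0$, i.e.\ $\rho\ge\tfrac{\phi_1+\sqrt{\phi_1^2+4\phi_2}}{2}$; combined with $\rho\ge\tfrac{\xi\mu_2}{\zeta\mu_1^+}$ this pins down $\rho$ as stated. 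The remaining claims are scalar checks: $\phi_2\ge0$ forces the larger root $\ge\phi_1$, hence $\delta\ge0$ and the $\delta$-formula; $\xi<\tfrac{\zeta\mu_1^+}{\mu_2}$ and $\phi_1+\phi_2<1$ give $\rho<1$; and $\phi_1+\phi_2<1$ also gives $\phi_1+\phi_2\le\rho$. Unrolling $\E[\mathcal{V}_{k+1}]\le\rho\,\E[\mathcal{V}_k]$ down to $\mathcal{V}_1$, and using $x_1=x_0$ so that $r_1=r_0$ and $\mathcal{V}_1=(1+\delta)\|x_0-x^*\|_G^2+\tfrac{2\zeta\omega}{\mu_2}f(x_0)$, gives the stated chain; since $\mathcal{V}_k\ge\|x_k-x^*\|_G^2$ and $\mathcal{V}_k\ge\tfrac{2\zeta\omega}{\mu_2}[f(x_{k-1})-f(x^*)]$, both quantities vanish.

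\textbf{Main obstacle.} The delicate point is the mixed term $\langle r_{k-1},\widehat g_k\rangle$, which couples the stale iterate with the current stochastic step: the whole design of the Lyapunov function (the $\delta\|r_{k-1}\|^2$ and the $f(x_{k-1})$ slots) and of the slack parameters $\omega,\gamma,\zeta,\xi$ exists to absorb this term so that the per-step coefficients collapse to \emph{exactly} $\phi_1$ and $\phi_2$; in particular it is the routing of the $\xi$-fraction into the function-value slot that produces the constraint $\xi<\tfrac{\zeta\mu_1^+}{\mu_2}$. A secondary point is the conditional-expectation bookkeeping --- keeping track of what is $\mathcal{F}_k$-measurable and how $\E_{i_k}[r_{k-1}^TT_{i_k}r_{k-1}\mid\mathcal{F}_k]$ relates to $f(x_{k-1})$ --- after which the verification that the chosen $\rho,\delta$ satisfy all displayed (in)equalities is routine scalar algebra.
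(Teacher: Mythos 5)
Your proposal is correct and follows essentially the same route as the paper's proof: expand $\|r_{k+1}\|^2$ from the momentum update, bound the cross terms via $-2\langle r_k,r_{k-1}\rangle\le\|r_k\|^2+\|r_{k-1}\|^2$ and the spectral bounds on $\alpha_{i_k}$ and $T_{i_k}$ (Lemma \ref{lem:jm1}, Theorem \ref{lem:mu}), split the resulting $f(x_k)$ and $f(x_{k-1})$ coefficients via $\zeta$ and $\xi$, and close the Lyapunov recursion with $\delta=\rho-\phi_1$ and the two constraints $\phi_1+\delta\le\rho$, $\phi_2\le\rho\delta$. The only cosmetic difference is that you control the stale coupling term $\langle r_{k-1},\widehat g_k\rangle$ by Cauchy--Schwarz plus Young in the $T_{i_k}$-seminorm, whereas the paper invokes convexity of $f_{i_k}$ directly; for these quadratics the two give identical coefficients.
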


\begin{remark}
\label{rem1}
Now, we will discuss Theorem \ref{th:momentum1} for the case of $G = B$ which was analyzed by \citet{loizou:2017}. From our earlier discussion, we deduce that $\alpha_{i_k} = \mu_1^+ = \mu_2 =1$ for this choice. Let's take, $\zeta, \xi \geq 0$ such that $\frac{\xi}{\zeta} < \frac{\lambda_1^+}{\lambda_2} \leq \frac{4 \xi}{\zeta}$ holds. Then, for any $0< \omega < \min \{2, \frac{1}{\zeta^2 (\lambda_1^+)^2}(4 \xi \lambda_2- \zeta \lambda_1^+)\}$, the condition $\phi_1+ \phi_2 < 1$ is satisfied if we choose momentum parameter $\gamma$ such that $\gamma \geq \max \left\{\xi, \zeta-2+\omega\right\} $ and
\begin{align*}
   &  \gamma <  \frac{-4+ \omega\lambda^+_{1}- \omega \lambda_2 +\sqrt{\left(4- \omega\lambda^+_{1}+ \omega \lambda_2 \right)^2 +16 \xi \omega \lambda_2+ 16 \omega \left( 2- \omega-\zeta\right) \lambda^+_1}}{8} = \gamma(\zeta, \xi)
\end{align*}
hold. Furthermore, it can be easily shown that the following relations
\begin{align*}
    & \phi_1(\zeta, \xi) + \phi_2(\zeta, \xi) \leq  \phi_1(0, 0) + \phi_2(0, 0) < 1, \quad \gamma(0,0) \leq \gamma (\zeta, \xi),  \quad \rho(\zeta, \xi) \leq \rho(0,0) < 1.
\end{align*}
hold. The rate $\rho(0,0)$ is obtained by \cite{loizou:2017} for this specific case. Furthermore, they also showed that for $0 < \gamma \leq \gamma(0,0)$ the algorithm converges. Considering the relations above we conclude that the obtained rate and $\gamma$ range in Theorem \ref{th:momentum1} are better than the exiting rate and $\gamma$ range.
\end{remark}

\begin{theorem}
\label{th:cesaro}
Let $\{x_k\}$ be the random sequence generated by the momentum algorithm. Let, $0 < \omega <2$ and $0 \leq \gamma < 1 $ such that the condition $ \frac{\omega}{\mu_2}+ \gamma \left(1+\frac{\mu_2}{\mu^+_1}\right)  < 2$ holds. Define, $\Tilde{x_k} = 1/k \sum \nolimits_{l =1}^{k}x_l$, then the following identity holds:
\begin{align*}
    \E \left[f(\bar{x}_k)\right] \leq \frac{ \mu^+_1 \mu_2 (1-\gamma)^2 \ \|x_0 -x^*\|_G^2 + 2 \gamma \omega \mu_2 f(x_0)}{2 \omega k \left(2\mu^+_1 \mu_2 - \gamma \mu^+_1 \mu_2 - \gamma \mu_2^2 -\omega \mu^+_1 \right)}.
\end{align*}
\end{theorem}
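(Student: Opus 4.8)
The plan is to derive a one-step recursion for a suitable Lyapunov-type combination of $\|r_k\|^2$, $\|r_{k-1}\|^2$ and $f(x_k)$, then telescope and invoke convexity of $f$ together with Jensen's inequality to pass from the average of the function values to the function value at the average iterate $\bar x_k$. First I would take the SSDM update \eqref{jml16}, subtract $x^*$, and pass to the $G^{1/2}$-transformed coordinates $r_k = G^{1/2}(x_k-x^*)$, obtaining $r_{k+1} = r_k - \omega \alpha_{i_k} T_{i_k} r_k + \gamma(r_k - r_{k-1})$. Expanding $\|r_{k+1}\|^2$, taking conditional expectation $\E_{i_k}[\cdot]$ and then full expectation, the cross terms produce $-\omega \E[\alpha_{i_k} r_k^T T_{i_k}^2 r_k]$ and $\gamma$-weighted inner products; I would control $\alpha_{i_k} T_{i_k}^2 \preceq T_{i_k}$ (from \eqref{t:1}, \eqref{t:4}, since $\alpha_{i_k}\mu_2(i_k)\le 1$ is false in general but $\alpha_{i_k} r_k^T T_{i_k}^2 r_k \ge \tfrac{1}{\mu_2} r_k^T T_{i_k}^2 r_k \ge \tfrac{1}{\mu_2}\mu_1^+(i_k) r_k^T T_{i_k} r_k$ gives the lower bound I actually need) to turn the descent term into something proportional to $f(x_k) = \tfrac12 r_k^T\E[T]r_k$, and use $\lambda_1^+ \|r_k\|^2 \le 2f(x_k) \le \lambda_2\|r_k\|^2$ and $\mu_1^+,\mu_2$ from Lemma \ref{lem:jm1} and Theorem \ref{lem:mu} to bound the remaining quadratic forms.

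Next I would handle the momentum cross terms $\gamma(r_k-r_{k-1})$ via Cauchy–Schwarz / Young's inequality, e.g. $2\langle a,b\rangle \le \|a\|^2+\|b\|^2$, to absorb them into $\|r_k\|^2$ and $\|r_{k-1}\|^2$ with coefficients that are polynomial in $\gamma$; this is where the hypothesis $\frac{\omega}{\mu_2}+\gamma\bigl(1+\frac{\mu_2}{\mu_1^+}\bigr)<2$ and $0\le\gamma<1$ enters, guaranteeing that the combined coefficient of the descent term $f(x_k)$ stays strictly positive. The outcome should be an inequality of the schematic form
\begin{align*}
\E[\|r_{k+1}\|^2 + c\,\|r_k\|^2] + \tfrac{2\omega}{\mu_2}(\text{const})\,\E[f(x_k)] \le \E[\|r_k\|^2 + c\,\|r_{k-1}\|^2] + (\text{smaller})\,\E[f(x_{k-1})]
\end{align*}
with a clean positive multiplier $M$ on $\E[f(x_k)]$ after collecting terms; the precise constant will be $M = \tfrac{2\omega}{\mu_1^+\mu_2}\bigl(2\mu_1^+\mu_2 - \gamma\mu_1^+\mu_2 - \gamma\mu_2^2 - \omega\mu_1^+\bigr)$ read off from the claimed denominator. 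Summing this telescoped inequality from $l=1$ to $k$, all the $\|r_l\|^2$ and $f(x_{l-1})-f(x_l)$ contributions cancel except boundary terms $\|r_0\|^2$, $\|r_1\|^2=\|r_0\|^2$ (since $x_0=x_1$) and $f(x_0)$, leaving $M\sum_{l=1}^k \E[f(x_l)] \le \mu_1^+\mu_2(1-\gamma)^2\|x_0-x^*\|_G^2 + 2\gamma\omega\mu_2 f(x_0)$ after clearing the $\mu_1^+\mu_2$ factors.

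Finally, since $f$ is convex (it is a nonnegative-weighted expectation of the convex quadratics $f_i(x)=\tfrac12\|x-x^*\|_{Z_i}^2$), Jensen's inequality gives $f(\bar x_k)=f\bigl(\tfrac1k\sum_{l=1}^k x_l\bigr)\le \tfrac1k\sum_{l=1}^k f(x_l)$, and dividing the summed bound by $kM$ yields exactly the stated estimate. The main obstacle I anticipate is the bookkeeping in the momentum step: choosing the auxiliary coefficient $c$ on $\|r_{k-1}\|^2$ and the weight on the carried-over $f(x_{k-1})$ term so that the telescoping is exact and the coefficient of $f(x_k)$ matches the advertised constant, while simultaneously checking that every coefficient produced is nonnegative under the single scalar hypothesis $\frac{\omega}{\mu_2}+\gamma(1+\frac{\mu_2}{\mu_1^+})<2$ — this requires carefully tracking how $\alpha_{i_k}\in[\tfrac{1}{\mu_2},\tfrac{1}{\mu_1^+}]$ and the spectral bounds interact, rather than any deep new idea.
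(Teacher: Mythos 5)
Your overall architecture (one-step recursion, telescoping, Jensen) matches the paper's, and your final step is right, but your proposed treatment of the momentum term has a genuine gap. You plan to expand $\|r_{k+1}\|^2$ directly from $r_{k+1}=r_k-\omega\alpha_{i_k}T_{i_k}r_k+\gamma(r_k-r_{k-1})$ and control the cross terms with Young's inequality $2\langle a,b\rangle\le\|a\|^2+\|b\|^2$. Doing so inflates the coefficient of $\|r_k\|^2$ to $1+3\gamma+2\gamma^2$ and introduces a positive coefficient $\gamma+2\gamma^2$ on $\|r_{k-1}\|^2$ (exactly as in \eqref{jm6} in the proof of Theorem \ref{th:momentum1}). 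A telescoping inequality of the schematic form you write down requires the coefficient of $\|r_k\|^2$ on the right to be at most $1-c$ while the coefficient of $\|r_{k-1}\|^2$ is at most $c$; with the Young's-inequality constants these two requirements force $c\le-3\gamma-2\gamma^2$ and $c\ge\gamma+2\gamma^2$ simultaneously, which is impossible for $\gamma>0$. The only escape is to absorb the excess $\|r_k\|^2$ mass into the $f(x_k)$ term via $\lambda_1^+\|r_k\|^2\le 2f(x_k)$, but that (a) drags $\lambda_1^+$ and $\lambda_2$ into the final constant, which do not appear in the claimed bound, and (b) requires the contraction condition $\phi_1+\phi_2<1$ of Theorem \ref{th:momentum1}, which is strictly stronger than, and not implied by, the hypothesis $\frac{\omega}{\mu_2}+\gamma\bigl(1+\frac{\mu_2}{\mu_1^+}\bigr)<2$ assumed here.

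The idea you are missing is the auxiliary (ghost) sequence used in the paper's proof: set $\vartheta_l=\frac{\gamma}{1-\gamma}(x_l-x_{l-1})$ and track $\chi_l=\|x_l+\vartheta_l-x^*\|_G^2$. The heavy-ball update \eqref{jml16} collapses to the memoryless recursion $x_{l+1}+\vartheta_{l+1}=x_l+\vartheta_l-\frac{\omega\alpha_{i_l}}{1-\gamma}\nabla^G f_{i_l}(x_l)$, so no Young's inequality is ever applied to the momentum term: the coefficient of $\chi_l$ stays exactly $1$, the displaced inner product $\langle x_{l-1}-x_l,\nabla^G f_{i_l}(x_l)\rangle$ is bounded by convexity as $f_{i_l}(x_{l-1})-f_{i_l}(x_l)$, and the resulting recursion $y_{l+1}+\varpi\,\E[f(x_l)]\le y_l$ with $y_l=\E[\chi_l]+\frac{2\omega\gamma}{\mu_1^+(1-\gamma)^2}\E[f(x_{l-1})]$ telescopes cleanly, with $\varpi>0$ being precisely where the stated scalar hypothesis enters. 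Your Jensen step and the evaluation of the boundary term using $x_0=x_1$ (so $\vartheta_1=0$) then go through as you describe.
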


\section{Special Cases Discussion}
\label{sec:special}

In this section, we briefly mention how one can recover existing algorithms and their convergence results from the proposed algorithms and the convergence Theorems.

\paragraph{Steepest Descent Method.} Let, $A = B \succ 0, G =I, \ S = A, \ \omega = 1$. Then we have
\begin{align}
    Z = A S (S^TAB^{-1}AS)^{\dagger}S^T A = A,  \quad \nabla^G f_i(x_k) = Ax_k-b, \ \alpha_{i_k} = \frac{\|Ax_k-b\|^2}{\|Ax_k-b\|^2_A}
\end{align}

\begin{corollary}
\label{th:sd}
The following results hold for the SD algorithm:
\begin{align*}
& \|x_{k+1}-x^* \|^2\leq \left(\frac{ \sigma_{i}-1 }{ \sigma_{i}+1} \right)^2 \|x_{k}-x^* \|^2, \quad \text{and} \quad \|\Tilde{x}_k-x^*\|^2 \leq \frac{\sigma }{k} \|x_0-x^*\|^2. \\
& \|x_{k+1}-x^* \|^2_A\leq \left(\frac{ \sigma_{i}-1 }{ \sigma_{i}+1} \right)^2 \|x_{k}-x^* \|^2_A, \quad \text{and} \quad \|\Tilde{x}_k-x^*\|^2_A \leq \frac{\lambda_{\max}(A) }{k} \|x_0-x^*\|^2.
\end{align*}
where, $\Tilde{x}_k = \sum \nolimits_{l=0}^{k-1} x_l$.
\end{corollary}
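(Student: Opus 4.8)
The plan is to derive Corollary \ref{th:sd} as a direct specialization of Theorems \ref{th:jm4} and \ref{th:ssd3}, since the SD method is just the SSD method run with $B = A \succ 0$, $G = I$, $S = A$, $\omega = 1$. First I would do the bookkeeping. With this choice there is a single deterministic sketch ($q = \tau = 1$), so every expectation $\E_i[\cdot]$ is vacuous and the per-iteration bounds will hold pathwise. Computing directly, $H = A(A^T A A^{-1} A A)^\dagger A^T = A(A^3)^\dagger A = A^{-1}$, hence $Z = A^T H A = A$, $T_{i_k} = G^{-1/2} Z G^{-1/2} = A \succ 0$, $\nabla^G f_i(x_k) = A^T H(Ax_k - b) = Ax_k - b$, and $\alpha_{i_k} = \|Ax_k-b\|^2/\|Ax_k-b\|_A^2$, matching the statement. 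Evaluating Definition \ref{jml17} and Theorem \ref{lem:mu} in this case yields $\mu_2 = \lambda_2 = \lambda_{\max}(A)$, $\mu_1^+ = \lambda_{\min}^+(A) = \lambda_{\min}(A)$ (using $A \succ 0$), $\sigma_i = \sigma = \lambda_{\max}(A)/\lambda_{\min}(A)$, and $\lambda_1^+ = \lambda_{\min}^+(\mathbf{T})/\bar q_k(\tau) = \lambda_{\min}(A)$ because $q = \tau = 1$ forces $\bar q_k(1) = 1$. Also $\mathbf{Range}(G^{-1}A^T) = \mathbf{Range}(A) = \R^n$, so the range hypotheses of the underlying theorems are automatic.

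Next I would read off the per-step rates. Since $A \succ 0$, the $T_{i_k} \succ 0$ branch of Theorem \ref{th:jm4} applies; substituting $\omega = 1$, $G = I$ and $\sigma_i = \sigma$ into $1 - (2\omega - \omega^2)\E[4\sigma_i/(1+\sigma_i)^2]$ gives the factor $1 - 4\sigma/(1+\sigma)^2 = \big(\tfrac{\sigma-1}{\sigma+1}\big)^2$, which is the Euclidean contraction (here $\|\cdot\|_G = \|\cdot\|$). For the $A$-norm one, note that $f_{i_k}(x) = \tfrac12\|x-x^*\|_{Z_{i_k}}^2 = \tfrac12\|x-x^*\|_A^2$, so the ratio controlled by the $T_{i_k} \succ 0$ branch of Theorem \ref{th:ssd3} is precisely $\|x_{k+1}-x^*\|_A^2/\|x_k-x^*\|_A^2$; the same arithmetic with $\omega = 1$ produces the factor $\big(\tfrac{\sigma-1}{\sigma+1}\big)^2$ again (equivalently, one may obtain this branch from the exact-line-search identity $\|x_{k+1}-x^*\|_A^2 = \|x_k-x^*\|_A^2 - (r_k^TA^2r_k)^2/(r_k^TA^3r_k)$ together with the Kantorovich-type bound \eqref{t:7}).

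Finally, the Cesàro bounds come from the averaged statements. The $\tilde x_k$-bound in Theorem \ref{th:jm4}, $\E[\|\tilde x_k - x^*\|_G^2] \leq \mu_2\|x_0-x^*\|_G^2 / (\omega k \lambda_1^+(2-\omega))$, becomes $(\lambda_{\max}(A)/(k\lambda_{\min}(A)))\|x_0-x^*\|^2 = (\sigma/k)\|x_0-x^*\|^2$ with $\omega = 1$, $G = I$ and the constants above. The function-value bound in Theorem \ref{th:ssd3}, $\E[f(\tilde x_k)] \leq \mu_2\|x_0-x^*\|_G^2/(2\omega k(2-\omega))$, together with $f(x) = \tfrac12\|x-x^*\|_{\E[Z]}^2 = \tfrac12\|x-x^*\|_A^2$ and $\mu_2 = \lambda_{\max}(A)$, yields $\|\tilde x_k - x^*\|_A^2 \leq (\lambda_{\max}(A)/k)\|x_0-x^*\|^2$. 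I do not expect a real obstacle: the only points needing care are checking that every sketch-dependent constant ($\lambda_1^+$, $\lambda_2$, $\mu_2$, $\sigma_i$, $\bar q_k$) collapses to the correct spectral quantity of $A$ when the sketch sample is a single copy of $A$, and that the vacuous expectations let the contraction inequalities hold iterate-by-iterate.
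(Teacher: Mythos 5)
Your proposal is correct and follows essentially the same route as the paper: the paper's own proof is a one-line instruction to substitute $G = I$, $B = A$, $S = A$, $\omega = 1$ (giving $Z = A$ and $2f(x_k) = \|x_k - x^*\|_A^2$) into Theorems \ref{th:jm4} and \ref{th:ssd3}, which is precisely what you do, just with the constants ($H = A^{-1}$, $T = A$, $\mu_2 = \lambda_2 = \lambda_{\max}(A)$, $\mu_1^+ = \lambda_1^+ = \lambda_{\min}(A)$, $\sigma_i = \sigma$) worked out explicitly. Your added observations that the single deterministic sketch makes all expectations vacuous and that $\left(1 - \tfrac{4\sigma}{(1+\sigma)^2}\right) = \left(\tfrac{\sigma-1}{\sigma+1}\right)^2$ are exactly the bookkeeping the paper leaves implicit.
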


\begin{corollary}
\label{th:sd1}
Let $\{x_k\}$ be the random sequence generated by the momentum algorithm. Let, $0 \leq \gamma < 1 $ such that the condition $ \gamma < \frac{2\lambda_{\max}(A) -1 }{(1+\sigma) \lambda_{\max}(A)}$ holds. Then the following identity holds:
\begin{align*}
    \|\Tilde{x}_k-x^*\|^2_A \leq \frac{ \sigma \lambda_{\min}(A) (1-\gamma)^2 \ \|x_0 -x^*\|^2 + \gamma \sigma \|x_k-x^*\|^2_A}{k \left(2\lambda_{\min}(A) \sigma - \gamma \lambda_{\min}(A) \sigma - \gamma \lambda_{\min}(A) \sigma^2 - 1\right)}.
\end{align*}
\end{corollary}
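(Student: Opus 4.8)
The plan is to derive Corollary~\ref{th:sd1} as a direct specialization of the general Cesaro bound in Theorem~\ref{th:cesaro} to the Steepest Descent parameter choice $A=B\succ 0$, $G=I$, $S=A$, $\omega=1$. First I would record what the quantities of Definition~\ref{jml17} collapse to under this choice. As already observed in the paragraph preceding Corollary~\ref{th:sd}, one has $Z_{i_k}=A$, and since $G=I$ this gives $T_{i_k}=G^{-1/2}Z_{i_k}G^{-1/2}=A$ with $r_k=x_k-x^*$. Hence $\mu_2(i_k)=\lambda_{\max}(A)$ and $\mu_1^+(i_k)=\lambda_{\min}(A)$ (every eigenvalue of $A$ is positive), so $\mu_2=\lambda_{\max}(A)$, $\mu_1^+=\lambda_{\min}(A)$, and $\sigma_i=\sigma=\lambda_{\max}(A)/\lambda_{\min}(A)$; moreover $f(x)=\tfrac12\|x-x^*\|_{Z_{i_k}}^2=\tfrac12\|x-x^*\|_A^2$, and because the sketch is deterministic the expectations in Theorem~\ref{th:cesaro} are vacuous, which is why the statement of Corollary~\ref{th:sd1} carries no $\E[\cdot]$.

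Second I would check that the hypothesis of Theorem~\ref{th:cesaro} reduces to the condition imposed in the corollary. Substituting the constants above, $\frac{\omega}{\mu_2}+\gamma\bigl(1+\frac{\mu_2}{\mu_1^+}\bigr)<2$ becomes $\frac{1}{\lambda_{\max}(A)}+\gamma(1+\sigma)<2$, which rearranges to exactly $\gamma<\frac{2\lambda_{\max}(A)-1}{(1+\sigma)\lambda_{\max}(A)}$; together with the standing requirement $0\le\gamma<1$ this is precisely the hypothesis of Corollary~\ref{th:sd1}, so Theorem~\ref{th:cesaro} applies.

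Third I would substitute into the conclusion of Theorem~\ref{th:cesaro},
\[
\E[f(\tilde{x}_k)]\le\frac{\mu_1^+\mu_2(1-\gamma)^2\|x_0-x^*\|_G^2+2\gamma\omega\mu_2 f(x_0)}{2\omega k\bigl(2\mu_1^+\mu_2-\gamma\mu_1^+\mu_2-\gamma\mu_2^2-\omega\mu_1^+\bigr)},
\]
using $f(\tilde{x}_k)=\tfrac12\|\tilde{x}_k-x^*\|_A^2$, $f(x_0)=\tfrac12\|x_0-x^*\|_A^2$, $\|x_0-x^*\|_G^2=\|x_0-x^*\|^2$, $\omega=1$, $\mu_1^+=\lambda_{\min}(A)$, and $\mu_2=\lambda_{\max}(A)=\sigma\lambda_{\min}(A)$. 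Clearing the factor $2$, then cancelling a common $\lambda_{\min}(A)$ from numerator and denominator and rewriting $\lambda_{\max}(A)=\sigma\lambda_{\min}(A)$ where convenient, the right-hand side collapses to the expression stated in the corollary. I would flag that the term $\|x_k-x^*\|_A^2$ appearing in the displayed numerator of Corollary~\ref{th:sd1} should read $\|x_0-x^*\|_A^2$, which is what this reduction produces, since it is $f(x_0)$ and not $f(x_k)$ that enters Theorem~\ref{th:cesaro}.

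Finally, I expect no conceptual obstacle: the whole argument is a substitution into Theorem~\ref{th:cesaro}, which I may assume. The only care required is arithmetic bookkeeping — verifying that the somewhat opaque hypothesis $\frac{\omega}{\mu_2}+\gamma(1+\mu_2/\mu_1^+)<2$ is genuinely equivalent to the clean $\gamma$-bound, and getting the powers of $\lambda_{\min}(A)$ and $\sigma$ right after the common-factor cancellation so that the rational bound of Theorem~\ref{th:cesaro} matches the claimed one term by term.
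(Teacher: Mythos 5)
Your proposal is correct and follows exactly the paper's own route: the paper's proof of this corollary is the one-line remark that with $G=I$, $B=A$, $S=A$, $\omega=1$ one has $Z=A$ and $2f(x_k)=\|x_k-x^*\|_A^2$, and the result follows by substituting into Theorem~\ref{th:cesaro}, which is precisely the specialization you carry out in detail. Your arithmetic checks out, and your observation that the numerator term $\|x_k-x^*\|_A^2$ in the stated corollary should read $\|x_0-x^*\|_A^2$ (since it arises from $f(x_0)$ in Theorem~\ref{th:cesaro}) is a correct identification of a typo in the statement.
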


\begin{proof}
As, $G =I, \ B = A, \ S = A, \ \omega = 1$, we have $Z = A, \ 2f(x_k) = \|x_k-x^*\|^2_A$. Using these  values in the previous Theorems, we get the results of the above corollaries.
\end{proof}

\paragraph{Momentum Sampling Kaczmarz Motzkin (MSKM).} Take, $q = m, \ B =I, \ S_i= e_i$. Then, considering the greedy sketching rule of subsection \ref{ss} in SSDM method, we have
\begin{align}
\label{skm}
  x_{k+1} = x_k- \omega \frac{a_{i}^Tx_k -b_{i}}{\|a_{i}\|_{G^{-1}}^2} G^{-1}a_{i} + \gamma (x_k-x_{k-1}).   
\end{align}
where the index $i$ is chosen as $i =  \argmax_{i \in \phi_k(\tau)} |a_i^Tx_k-b_i|^2/\|a_i\|^2$ and $\phi_k(\tau)$ denotes the collection of $\tau$ rows chosen uniformly at random out of $m$ rows of the constraint matrix $A$. Considering the above in \eqref{g1}, \eqref{g2}, we can estimate the constant $\lambda_1^+$ and $\lambda_2$ as follows:
\begin{align*}
    \textbf{GSR:} & \quad \lambda_1^+ =  \frac{1}{m} \lambda_{\min}^+\left(G^{-\frac{1}{2}}A^TAG^{-\frac{1}{2}}\right) , \ \ \lambda_2 =  \min \left\{\mu_2, \frac{ \tau }{m} \lambda_{\max}\left(G^{-\frac{1}{2}}A^TAG^{-\frac{1}{2}}\right) \right\} \\
    \textbf{GCS:} & \quad \lambda_1^+ =  \frac{ 1}{m} \lambda_{\min}^+\left(G^{-\frac{1}{2}}A^TAG^{-\frac{1}{2}}\right), \ \ \lambda_2 =  \max_i \|a_i\|^2_{G^{-1}}
\end{align*}
where, $\mu_2 = \max_i \|a_i\|^2_{G^{-1}}$ (we assumed $\|a_i\|^2 =1$ for all $i$). Using the above parameters in 
Theorems \ref{th:ssd1}, \ref{th:jm4}, \ref{th:ssd3}, and \ref{th:momentum1}, we get the convergence results for the following special methods: $G = I, \ \gamma = 0, \ \tau = 1, m$: \citet{gower2019adaptive}, $G = I, \ \tau = 1$: \citet{loizou:2017} and $G = I, \ \gamma = 0$: \citet{haddock:2019}.

\paragraph{Momentum Sampling Stochastic Descent.} Take, $q = m$ in SSDM algorithm along with the greedy sketching rule. Then, considering $S_i= s_i$ as sketching vectors, we have
\begin{align}
\label{mscd}
& S_i = s_i, \ B \succ 0 : \ \   x_{k+1} = x_k- \omega \frac{s_i^T(Ax_k -b)}{\|A^Ts_i\|^2_{G^{-1} }} G^{-1}A^T s_{i} + \gamma (x_k-x_{k-1})
\end{align}
where the index $i$ is chosen as $i =  \argmax_{i \in \phi_k(\tau)} |s_i^T(Ax_k -b)|^2/\|A^Ts_i\|^2_{B^{-1}} $ and $\phi_k(\tau)$ denotes the collection of $\tau$ sketching vectors chosen uniformly at random out of $m$ sketching matrices. With the choice $B = A= G \succ 0, \ \tau = 1, m= n$ and $s_i = u_i$ (the eigenvectors of matrix $A$, i.e., $Au_i = \lambda_i u_i$ ), then the above method resolves into the following:
\begin{align}
 x_{k+1} = x_k- \omega \frac{\lambda_i u_i^Tx_k-u_i^Tb}{\lambda_i \|u_i\|^2_{G^{-1} }} G^{-1}u_{i} + \gamma (x_k-x_{k-1})
\end{align}
where, $i$ is chosen with probability $p_i = 1/m$. This is the so-called \textit{Stochastic Spectral Descent} method proposed by \citet{spectral}. With the above choice, we get
\begin{align}
    \textbf{GSR} \ \ \& \ \ \textbf{GCS:}  \quad \lambda_1^+ =  \frac{ 1}{m}, \ \ \lambda_2 =  \frac{ \tau}{m}, \ \ \mu_2 = 1
\end{align}
as $T_i = u_iu_i^T, \ \mathbf{T} = I$. Then considering Theorems \ref{th:ssd1}, \ref{th:jm4}, \ref{th:ssd3}, and \ref{th:momentum1}, we get new complexity results of the above method as well as the ones provided by \citet{spectral}.

\paragraph{Momentum Sampling Co-ordinate Descent (MSCD).} Take, $q = m$ in SSDM algorithm along with the greedy sketching rule. Then, we have the following:
\begin{align}
& S_i = e_i, \ B = A \succ 0 : \ \   x_{k+1} = x_k- \omega \frac{a_{i}^Tx_k -b_{i}}{\|Ae_i\|^2_{G^{-1} }} G^{-1}A e_{i} + \gamma (x_k-x_{k-1}) \label{mrcd1} \\
& S_i = Ae_i, \ B = A^TA \succ 0 : \ \  x_{k+1} = x_k- \omega \frac{A_i^T(Ax_k -b)}{\|A^TAe_i\|^2_{G^{-1} }} G^{-1}A^TA e_{i} + \gamma (x_k-x_{k-1})  \label{mrcd2}
\end{align}
where the index $i$ is chosen as $i =  \argmax_{i \in \phi_k(\tau)} |S_i^T(Ax_k -b)|^2/\|A^TS_i\|^2_{B^{-1}} $ and $\phi_k(\tau)$ denotes the collection of $\tau$ sketching vectors chosen uniformly at random out of $m$ vectors. Considering the choice \eqref{mrcd1} in \eqref{g1}, \eqref{g2}, we can estimate the constant $\lambda_1^+$ and $\lambda_2$ as follows:
\begin{align*}
    \textbf{GSR:} & \quad \lambda_1^+ =  \frac{1}{m} \lambda_{\min}^+\left(G^{-\frac{1}{2}}A^2G^{-\frac{1}{2}}\right) , \ \ \lambda_2 =  \min \left\{\mu_2, \frac{ \tau }{m} \lambda_{\max}\left(G^{-\frac{1}{2}}A^2G^{-\frac{1}{2}}\right) \right\} \\
    \textbf{GCS:} & \quad \lambda_1^+ =  \frac{ 1}{m} \lambda_{\min}^+\left(G^{-\frac{1}{2}}A^2G^{-\frac{1}{2}}\right), \ \ \lambda_2 =  \max_i \|a_i\|^2_{G^{-1}}
\end{align*}
where, $\mu_2 = \max_i \|a_i\|^2_{G^{-1}}$ (we assumed $A_{ii} =1$ for all $i$). Similarly, using the parameter choice of \eqref{mrcd2}, we get
\begin{align*}
    \textbf{GSR:} & \quad \lambda_1^+ =  \frac{1}{m} \lambda_{\min}^+\left(G^{-\frac{1}{2}}A^TADA^TAG^{-\frac{1}{2}}\right) ,  \ \lambda_2 =  \min \left\{\mu_2, \frac{ \tau }{m} \lambda_{\max}\left(G^{-\frac{1}{2}}A^TADA^TAG^{-\frac{1}{2}}\right) \right\} \\
    \textbf{GCS:} & \quad \lambda_1^+ =  \frac{ 1}{m} \lambda_{\min}^+\left(G^{-\frac{1}{2}}A^TADA^TAG^{-\frac{1}{2}}\right), \ \ \lambda_2 = \mu_2 =  \max_i \|A^TAe_i\|^2_{G^{-1}}/\|A_i\|^2
\end{align*}
where, $D$ is a $n \times m$ matrix such that $D_{ii} = 1/\|A_i\|^2, \ D_{ij} = 0 \ \forall i \neq j$. Using the above parameters in 
Theorems \ref{th:ssd1}, \ref{th:jm4}, \ref{th:ssd3}, and \ref{th:momentum1}, we get the convergence results for the following methods: $G = B = A, \ S_i = e_i \ \gamma = 0, \ \tau = 1, m$: \citet{lewis:2010,gower:2015, gower2019adaptive}, $G = B = A, \ S_i = e_i$: \citet{loizou:2017}.

\section{Numerical Experiments}
\label{sec:num}

In this section, we perform computational experiments to evaluate the performance of the proposed methods equipped with greedy sampling rules and momentum. We implement the proposed methods in \textit{MATLAB R2020a} at a workstation with 64GB RAM, Intel(R) Xeon(R) CPU E5-2670, two processors running at 2.30 GHz. For a fair understanding of the performance, we carry out the experiments on a wide range of datasets \footnote{The selected datasets has high condition number, the system $Ax = b$ is ill-conditioned.} such as 1) Gaussian system, and 2) LIBSVM data \citep{li:2016}, and 3) SuiteSparse Matrix \citep{suitesparse}. First, we fix $\omega = 1, \ G = B$ \footnote{$\omega =1$ has the best computational performance for both linear systems \citep{richtrik2017stochastic,loizou:2017} and linear feasibility problems \citep{haddock:2017, morshed2020generalization,morshed:momentum, morshed:sj}.}. Second, we select Greedy Kacamarz (GK) and Greedy Co-ordinate Descend (GCD) algorithms to test the performance of sampling rules and momentum. Finally, we set the following parameters: $\gamma = 0.1, 0.2, 0.3, 0.4, 0.5$ \footnote{The momentum parameter choice is arbitrary, we don't need the spectral information beforehand to select $\gamma$. Furthermore, all experiments were run for $10$ times and the averaged performance was reported.}, initial point, $x_0 = 1000*[1,1,...,1]^T $, halting residual error, $\|Ax-b\| \leq 10^{-10}$.

\paragraph{Test Datasets} For the GK method, random test instances are generated as follows: vector $x \in \R^{n}$ and matrix $A\in \R^{m \times n}$ are taken as i.i.d $\mathcal{N}(0,1)$, then $b$ is set as $b = Ax$ (we maintain the consistency of the system that way). We also consider the following ten datasets for LIBSVM collection: \texttt{sonar} ($200 \times 60$), \texttt{ionosphere} ($351 \times 34$), \texttt{australian} ($690 \times 14$), \texttt{breastcancer} ($683 \times 10$), \texttt{splice} ($1000 \times 60$), \texttt{svmguide3} ($1,243 \times 21$), \texttt{mushrooms} ($8124 \times 112$), \texttt{phishing} ($11055 \times 68$), \texttt{a7a} ($16,100 \times 123$), \texttt{a9a} ($32,561 \times 123$). The random instances of GCD method are generated as follows: we first generate Gaussian matrix $G \in \R^{m \times n}$ and vector $x$ and then set $A = G^TG, \ b = Ax$. For real dataset, we consider the following positive definite matrices from the SuiteSparse Matrix collection: \texttt{bcsstk01} ($48 \times 48$), \texttt{bcsstk02} ($66 \times 66$), \texttt{nos4} ($100 \times 100$). For both of these algorithms, we measure residual error ($\|Ax_k-b\|_2$) and relative error $\|x_k-x^{*}\|_B/\|x_0-x^*\|_B$ \footnote{We set $x^* = x_0 + A^{\dagger} (b-Ax_0)$, when $Ax = b$ has a unique solution $x^*= x_{int}$, where $x_{int}$ is the initial Gaussian vector used to generate $b$.} with respect to the number of iterations and CPU time measured by MATLAB tic-toc function. The rest of the section is divided into two subsections. In subsection 7.1, we compare the proposed sampling rules. In subsection 7.2, we discuss the effect of momentum on the greedy algorithms.

\subsection{Comparison among Sampling Rules without Momentum}
In this subsection, we perform comparison experiments for both methods with respect to the proposed sampling rules.
\begin{figure}[htbp]
 \vspace{- 10 pt}
  \centering
    \includegraphics[scale = 0.35]{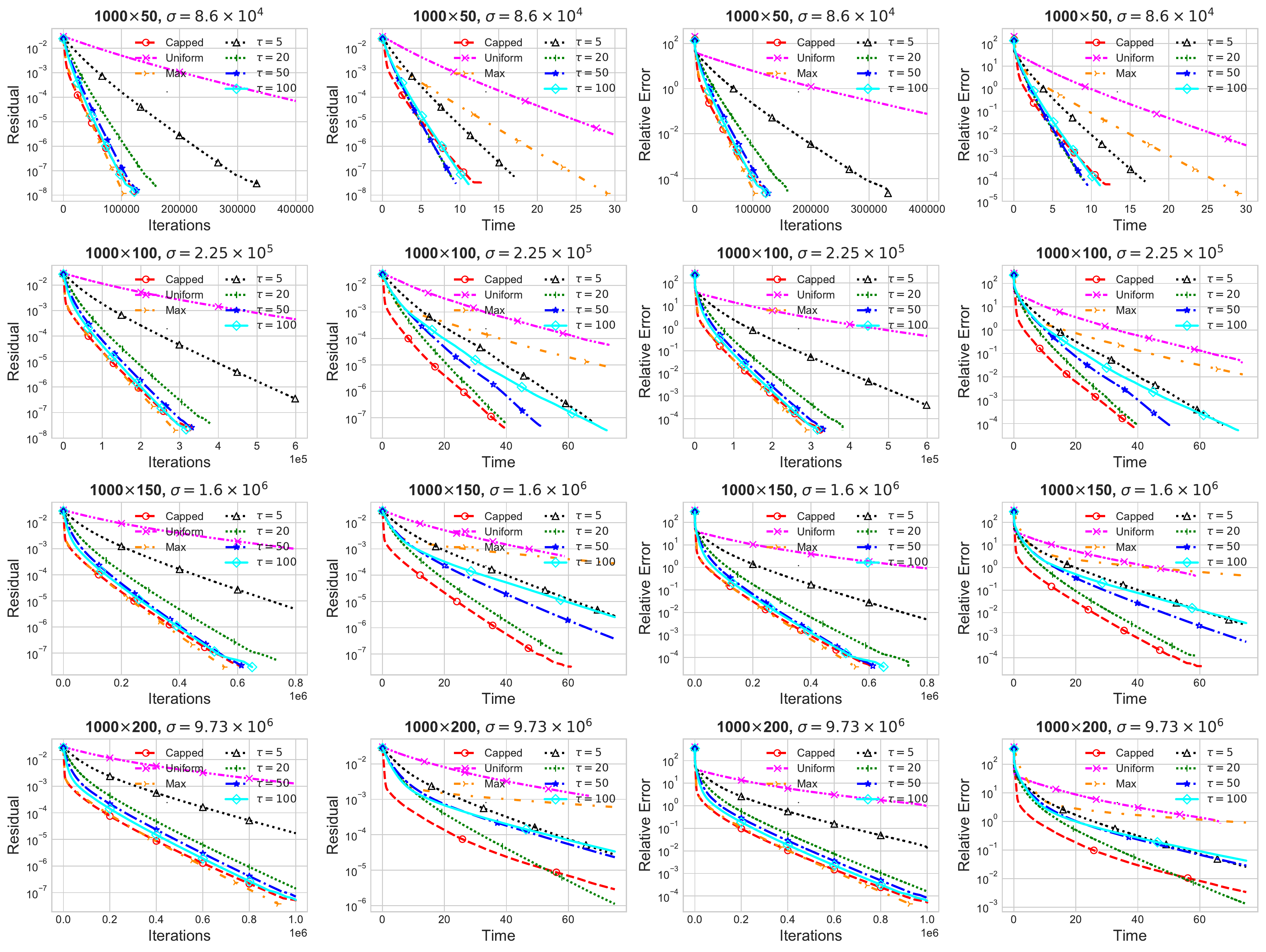}
    \caption{GK: sampling rules comparison on random data, left 2 panels: residual error vs No. of iterations and time, right 2 panels: relative error vs No. of iterations and time.}
    \label{fig:0}
    \vspace{- 10 pt}
\end{figure}

\begin{figure}[htbp]
 \vspace{- 10 pt}
  \centering
    \includegraphics[scale = 0.37]{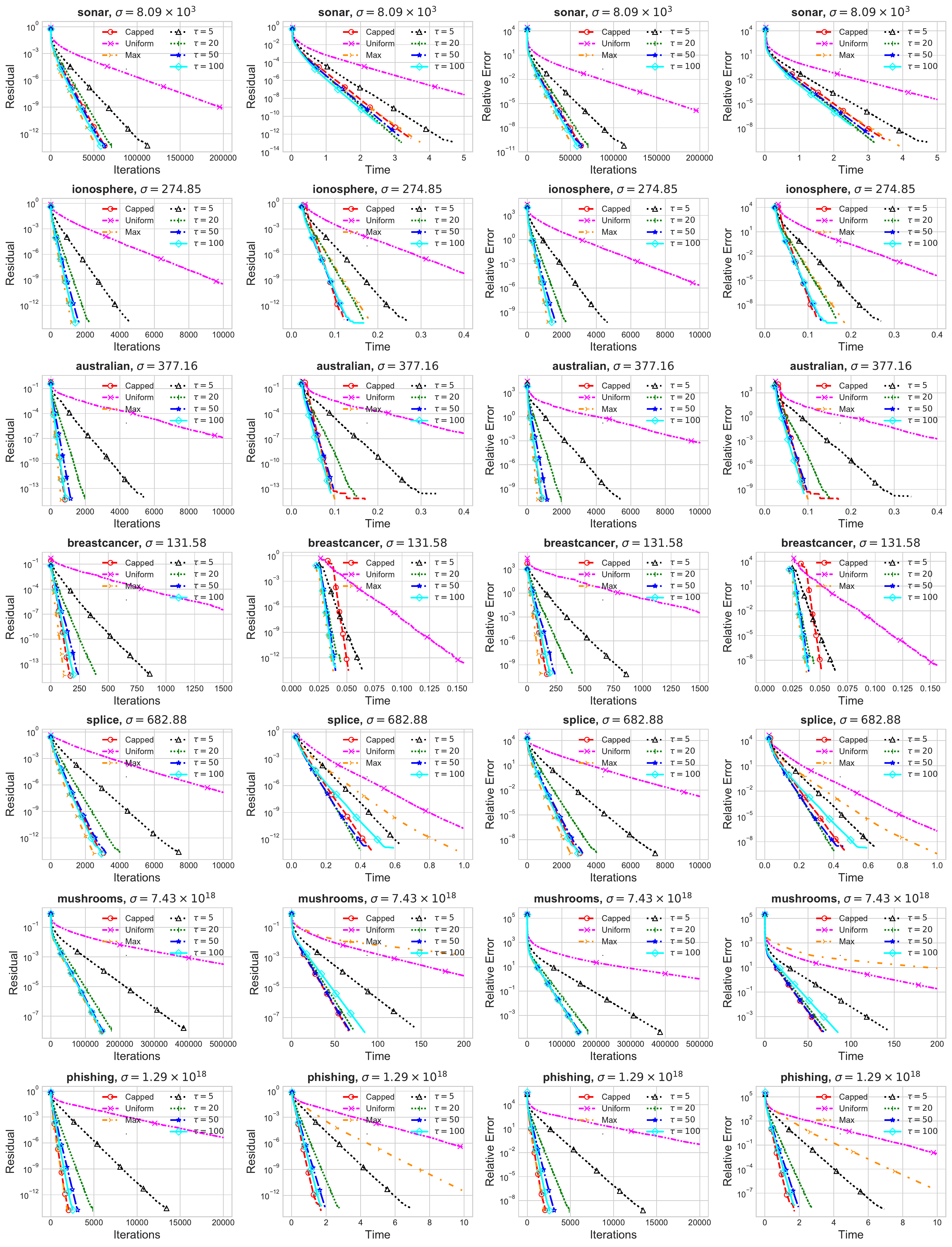}
    \caption{GK: sampling rules comparison on LIBSVM data, left 2 panels: residual error vs No. of iterations and time, right 2 panels: relative error vs No. of iterations and time.}
    \label{fig:1}
    \vspace{- 10 pt}
\end{figure}

\newpage

\begin{figure}[H]
 \vspace{- 10 pt}
  \centering
    \includegraphics[scale = 0.37]{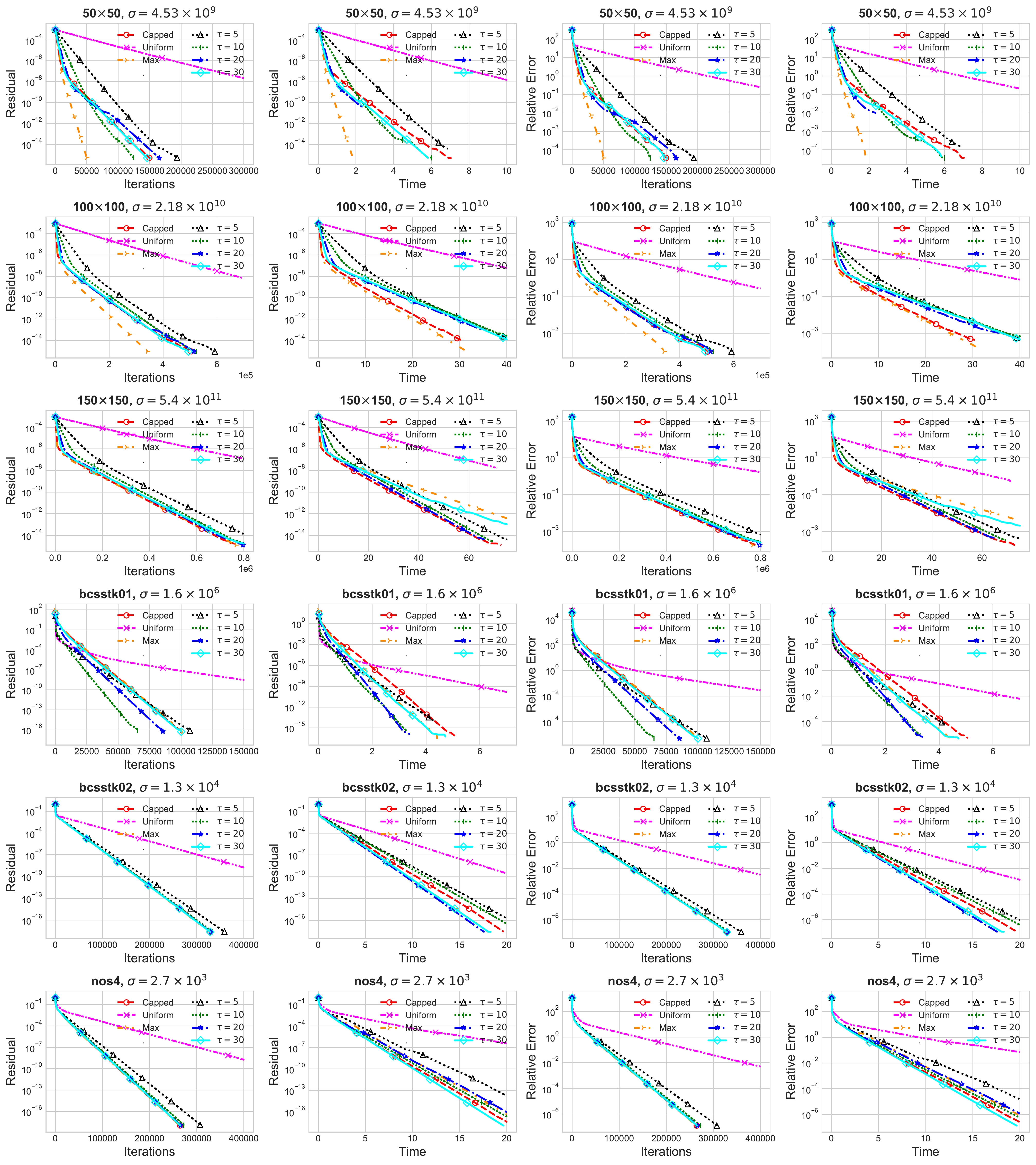}
    \caption{GCD: sampling rules comparison on random and real data, left 2 panels: residual error vs No. of iterations and time, right 2 panels: relative error vs No. of iterations and time.}
    \label{fig:2}
      \vspace{- 15 pt}
\end{figure}
We choose the following sampling rules (1) GK method: uniform ($\tau =1$), $\tau = 5$, $\tau = 20$, $\tau = 50$, $\tau = 100$, max. distance ($\tau = m$), capped ($\tau_1 = 1, \tau_2 = m, \theta = 0.5$), and (2) GCD method: uniform ($\tau =1$), $\tau = 5$, $\tau = 10$, $\tau = 20$, $\tau = 30$, max. distance ($\tau = m$), capped ($\tau_1 = 1, \tau_2 = m, \theta = 0.5$). We compare the sampling rules on 7 LIBSVM problems for the GK method, on 6 random/real problems for the GCD method. In Figures \ref{fig:0}, \ref{fig:1} and \ref{fig:2}, we plot the performance measures of the selected sampling rules for GK and GCD method, respectively. From the figures, it can be concluded that the proposed greedy sampling rules heavily outperform existing sampling rules such as uniform and max. distance rules for both GK and GCD. Overall, the uniform sampling rule has the worst performance compared to other sampling rules. Moreover, the greedy methods performs equally compared to each other (in the next subsection, we elaborate this point in more detail, also see the Appendix section for more comparison graphs).

\subsection{Comparison among Sampling Rules with Momentum}
First, we discuss the effect of momentum parameter $\gamma$ on the choice of greedy sampling parameter selection, i.e., $\tau$ and $\theta$. In Figures \ref{fig:3} and \ref{fig:4}, we plot the total CPU time taken by the momentum algorithms to reach a certain residual error threshold with varying $\tau$ and $\theta$. Here, we consider 10 LIBSVM problems for the GK method and 6 random/sparse problems for the GCD method. From the figures, it is evident that sample size $\tau$ has significant impact on the performance of momentum algorithms, whereas the effect of $\theta$ remains more or less constant for both methods. The choice $1 < \tau \ll m$ produces the best performing methods that confirms our claim about the importance of sampling. Furthermore, the proposed momentum variants heavily outperform the basic methods with no momentum. Since, sampling plays an important role, in the following we compare the momentum algorithms for fixed $\tau$.
\begin{figure}[htbp]
\centering
    \includegraphics[scale = 0.37]{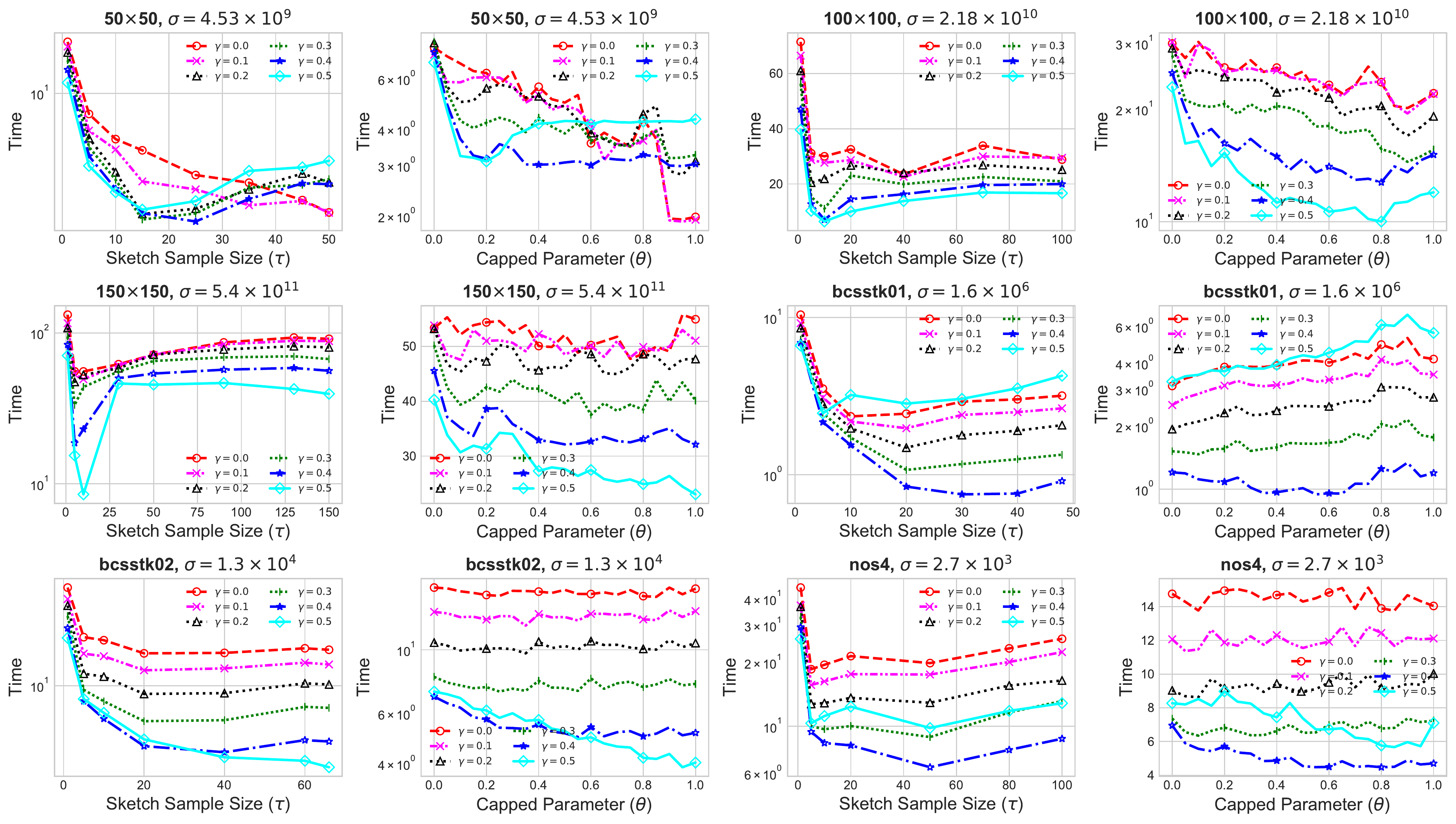}
    \caption{GCD momentum (effect of sketch sample size $\tau$ and capped parameter $\theta$).}
    \label{fig:3}
  \vspace{- 10 pt}
\end{figure}

\paragraph{GK and GCD method with momentum} Here we compare the momentum algorithms for some fixed sampling rules. In Figures \ref{fig:5}- \ref{fig:6}, we plot the comparison graphs for GK and GCD method, respectively with $\tau = 5$. From Figures \ref{fig:5} and \ref{fig:6}, we see that the proposed momentum variants outperform the basic GK and GCD methods for all of the considered test instances (please see Figures \ref{fig:55}-\ref{fig:17} for different $\tau$ in the Appendix section). In the following, we summarise the findings of our numerical experiments:
\newpage
\begin{figure}[H]
\centering
    \includegraphics[scale = 0.36]{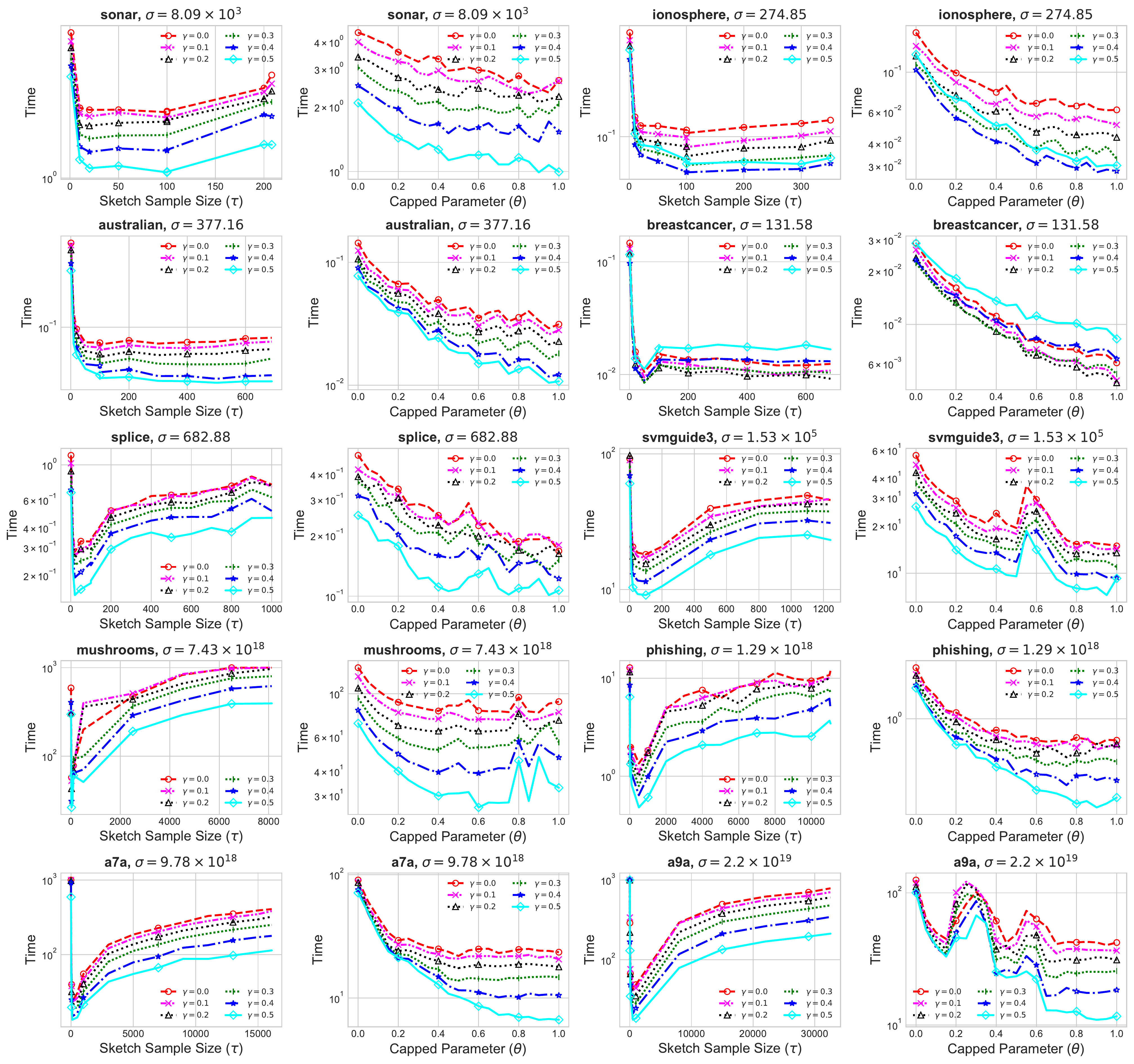}
    \caption{GK momentum (effect of sketch sample size $\tau$ and capped parameter $\theta$).}
    \label{fig:4}
  \vspace{- 10 pt}
\end{figure}
\begin{itemize}
    \item From our experiments, we find that the proposed greedy rules outperform the existing rules. For the case of greedy capped sampling, parameter $\theta$ has no significant impact on the algorithmic performance. However, for the greedy sketching rule, the choice of $\tau$ drives the performance and algorithms with sketch sample size $1 < \tau \ll m$ have the best performance. 
    
    \item Over the course of our experiments, $\gamma$ is chosen arbitrarily. It is evident that the choice $\gamma = 0.4$ leads to best performance in general. The choice $\gamma = 0.5$ works well most of the times but fails significantly from time to time. $\gamma = 0.4$ is the safe choice as it converges and has better performance.
    
    \item We test the proposed methods on a wide range of random/real-world datasets. We find that, the proposed methods performs better on ill-conditional linear systems (condition number of data matrix $A$ is large). We report the corresponding condition numbers in the figures.
\end{itemize}

\begin{figure}[htbp]
  \vspace{- 10 pt}
    \includegraphics[scale = 0.37]{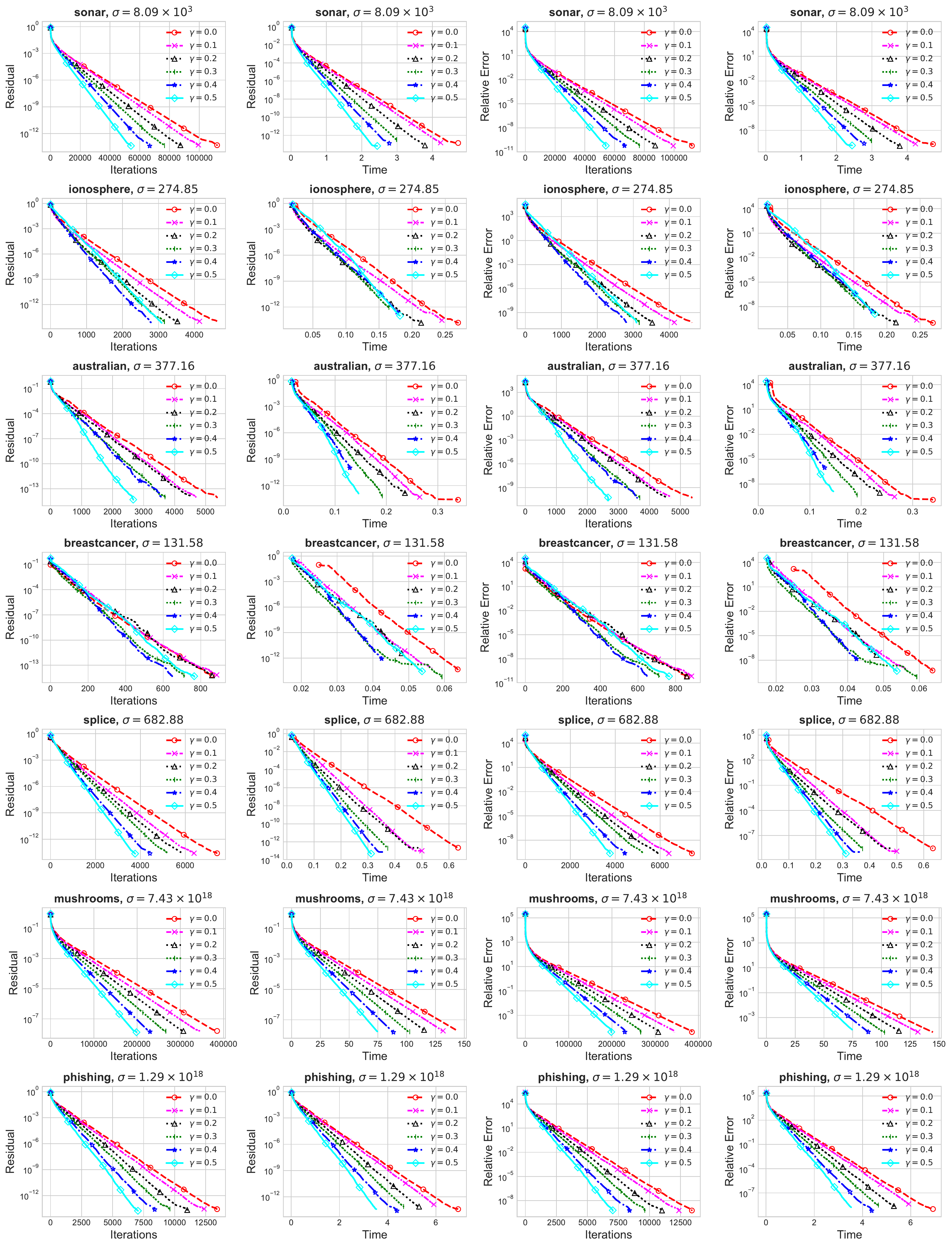}
    \caption{GK with momentum (sketch Sample size, $\tau = 5$): comparison among momentum variants on LIBSVM data, residual error and relative error vs CPU time and No. of iterations.}
    \label{fig:5}
      \vspace{- 10 pt}
\end{figure}
\newpage

\begin{figure}[htbp]
  \vspace{- 10 pt}
    \includegraphics[scale = 0.37]{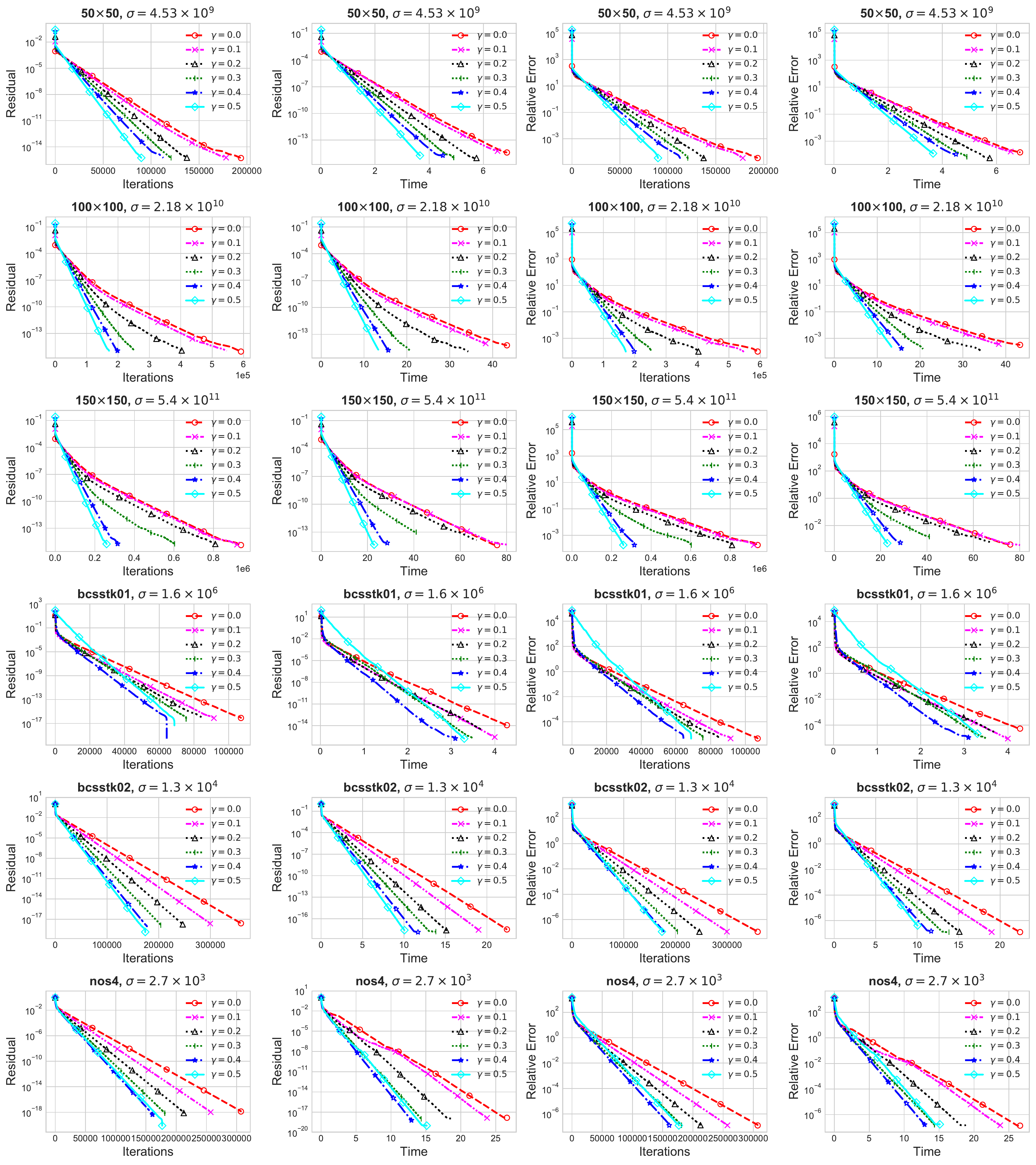}
    \caption{GCD with momentum (sketch Sample size, $\tau = 5$): comparison among momentum variants on Gaussian and Matrix market data, residual error and relative error vs CPU time and No. of iterations.}
    \label{fig:6}
      \vspace{- 10 pt}
\end{figure}

\section{Conclusions}
\label{sec:conclusions}

In this work, we propose a stochastic steepest descent framework for solving linear system of equations. Furthermore, we propose to incorporate greedy sketching strategies along with momentum technique to the basic method. In doing so, we synthesize well-known iterative methods such as steepest descent and randomized iterative methods such as randomized Kaczmarz, randomized co-ordinate descent into one framework. From our convergence analysis one can recover multiple convergence convergence results by varying sketching rules and sketching matrices. We validate the proposed algorithmic variants on a wide variety of datasets such as random Gaussian, LIBSVM, and matrix-market sparse matrices. From our numerical experiments, we conclude that the proposed sketching rule based methods heavily outperform the existing sketching rules. Moreover, the proposed momentum scheme accelerate algorithmic performance of the basic method. We conclude the paper with the following research query: we propose a connection between randomized iterative methods with iterative methods for solving linear system, one could ask if there exists a randomized scheme based on the SSDM method that will synthesize the well-known \textit{Conjugate Gradient} into an equivalent randomized iterative method framework. Another possible extension would be to develop extensions of the proposed greedy sketching rules based on adaptive sketch sample size, i.e., $\tau_k$.

\section*{Appendix}




\appendix
\begin{lemma}
\label{lem:skmseq}
(Lemma 2.1 in \citep{haddock:2017})  Let $\{x_k\},\ \{y_k\}$ be real non-negative sequences such that $x_{k+1} > x_k > 0$ and $y_{k+1} \geq y_k \geq 0$, then
\begin{align*}
  \sum\limits_{k=1}^{n} x_k y_k \ \geq \   \sum\limits_{k=1}^{n} \overline{x} y_k, \quad \text{where} \ \ \overline{x} = \frac{1}{n}\sum\limits_{k=1}^{n} x_k.
\end{align*}
\end{lemma}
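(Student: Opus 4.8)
The plan is to recognize this as the classical Chebyshev sum inequality for similarly ordered sequences and to prove it via the symmetric pairwise-difference identity. First I would reduce the claimed inequality to the equivalent form
$$n \sum_{k=1}^n x_k y_k \geq \left(\sum_{k=1}^n x_k\right)\left(\sum_{k=1}^n y_k\right),$$
which is obtained by multiplying both sides of the target by $n$ and using $\sum_{k=1}^n \overline{x}\, y_k = \overline{x}\sum_{k=1}^n y_k = \frac{1}{n}\left(\sum_k x_k\right)\left(\sum_k y_k\right)$. This places the whole argument on a single quantity whose sign I must determine.

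The key step is the algebraic identity
$$n \sum_{k=1}^n x_k y_k - \Big(\sum_{k=1}^n x_k\Big)\Big(\sum_{k=1}^n y_k\Big) = \frac{1}{2}\sum_{i=1}^n\sum_{j=1}^n (x_i - x_j)(y_i - y_j).$$
I would verify this simply by expanding the double sum as $\sum_{i,j}(x_i y_i - x_i y_j - x_j y_i + x_j y_j) = 2n\sum_k x_k y_k - 2\left(\sum_k x_k\right)\left(\sum_k y_k\right)$, so dividing by $2$ yields the right-hand side of the reduced inequality.

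The heart of the argument is the sign analysis of each summand $(x_i - x_j)(y_i - y_j)$. Because both sequences are non-decreasing — the hypotheses $x_{k+1} > x_k > 0$ and $y_{k+1} \geq y_k \geq 0$ make them co-monotone — for every index pair $(i,j)$ the two factors $x_i - x_j$ and $y_i - y_j$ carry the same sign (both nonnegative when $i \geq j$, both nonpositive when $i \leq j$, and both zero when $i=j$). Hence each product is $\geq 0$, the entire double sum is $\geq 0$, and dividing the reduced inequality by $n$ recovers the stated bound. I anticipate no serious obstacle here; the only point demanding care is confirming co-monotonicity uniformly over all pairs, including the diagonal $i=j$, which is immediate from the monotonicity hypotheses.

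As an alternative I could give a self-contained Abel-summation proof: set $d_k = x_k - \overline{x}$, observe $\sum_k d_k = 0$ and that $\{d_k\}$ is increasing, so there is a threshold index $m^*$ with $d_k \leq 0$ for $k \leq m^*$ and $d_k \geq 0$ thereafter; comparing each $y_k$ to $y_{m^*}$ gives $d_k y_k \geq d_k y_{m^*}$ in both regimes, whence $\sum_k d_k y_k \geq y_{m^*}\sum_k d_k = 0$, which is exactly the claim. I would favor the identity-based route, since it avoids the case split and renders the non-negativity transparent in one line.
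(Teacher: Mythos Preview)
Your proof is correct: this is precisely Chebyshev's sum inequality, and both the pairwise-difference identity route and the Abel-summation alternative you outline are standard, complete arguments. Note, however, that the paper does not supply its own proof of this lemma at all---it simply quotes the statement as Lemma~2.1 of \citep{haddock:2017} and uses it as a black box in the proof of Theorem~\ref{lem:mu}---so there is nothing in the paper to compare your argument against.
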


\begin{lemma}
\label{lem:strang}
(\citep{strang1960}) For any matrix $M \succ 0$, we have the following bound:
\begin{align*}
    \|u\|^2_M \|v\|^2_{M^{-1}} \leq \frac{\left[\lambda_{\max}(M)+ \lambda_{\min}(M)\right]^2}{4 \lambda_{\min}(M) \lambda_{\max}(M)} \|u\|^2 \|v\|^2
\end{align*}
for all $u,v \in \R^n$ and this is the best bound possible.
\end{lemma}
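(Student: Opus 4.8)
The stated bound is the classical Kantorovich inequality, so my plan is to reduce it to a one-dimensional optimization through the spectral decomposition of $M$ together with a convexity (supporting-line) estimate. Write $M = Q\Lambda Q^{T}$ with $\Lambda = \mathrm{diag}(\lambda_1,\dots,\lambda_n)$ and $\lambda_i \in [\lambda_{\min},\lambda_{\max}]$, and observe that both sides are homogeneous of degree $2$ in $u$ and in $v$, so I may normalize $\|u\| = \|v\| = 1$ and rotate into the eigenbasis of $M$. After this reduction the left-hand side becomes $\big(\sum_i \lambda_i p_i\big)\big(\sum_i \lambda_i^{-1} p_i\big)$, where the $p_i \ge 0$ with $\sum_i p_i = 1$ are the squared coordinates of the (common) whitened vector in the extremal configuration, and the target is to show this product never exceeds $C := \tfrac{(\lambda_{\min}+\lambda_{\max})^{2}}{4\lambda_{\min}\lambda_{\max}}$.

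The crux is a secant bound coming from convexity of $t \mapsto 1/t$ on $[\lambda_{\min},\lambda_{\max}]$: for every $\lambda$ in this interval, $\tfrac{1}{\lambda} \le \tfrac{\lambda_{\min}+\lambda_{\max}-\lambda}{\lambda_{\min}\lambda_{\max}}$, since the right-hand side is exactly the chord joining $(\lambda_{\min},1/\lambda_{\min})$ and $(\lambda_{\max},1/\lambda_{\max})$ and the graph of a convex function lies below its chord. Averaging this against the weights $p_i$ and setting $a := \sum_i \lambda_i p_i \in [\lambda_{\min},\lambda_{\max}]$ gives $\sum_i \lambda_i^{-1} p_i \le \tfrac{\lambda_{\min}+\lambda_{\max}-a}{\lambda_{\min}\lambda_{\max}}$, so the product is at most $\tfrac{a(\lambda_{\min}+\lambda_{\max}-a)}{\lambda_{\min}\lambda_{\max}}$. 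Maximizing the concave quadratic $a \mapsto a(\lambda_{\min}+\lambda_{\max}-a)$, whose vertex sits at $a = (\lambda_{\min}+\lambda_{\max})/2$, yields the value $\tfrac{(\lambda_{\min}+\lambda_{\max})^{2}}{4}$ and hence the constant $C$.

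For the ``best bound possible'' claim I would exhibit the equality case: place mass $p = \tfrac12$ on the two extreme eigenvalues $\lambda_{\min}$ and $\lambda_{\max}$ (equivalently, take $u$ with equal energy along the corresponding eigenvectors), which makes the secant inequality exact on its support and puts $a$ at the vertex of the quadratic simultaneously, so that $C$ is attained. The main obstacle here is conceptual rather than computational: the two factors must be kept coupled through a single spectral weight $p_i$, because bounding them separately (maximizing each weighted average on its own) only produces the weaker constant $\lambda_{\max}/\lambda_{\min}$. It is precisely the secant step, which trades the awkward factor $\sum_i \lambda_i^{-1} p_i$ for an affine function of the tractable factor $a$, that exposes this coupling and delivers the sharp constant $C$.
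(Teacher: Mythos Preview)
The paper does not provide a proof of this lemma; it is quoted as a citation to Strang (1960) and used as a black box, so there is no paper proof to compare against.

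That said, your argument has a real gap at the reduction step. After normalizing and passing to the eigenbasis you assert that the left-hand side becomes $\bigl(\sum_i \lambda_i p_i\bigr)\bigl(\sum_i \lambda_i^{-1} p_i\bigr)$ with a \emph{single} weight vector $p$. This is only correct when $u=v$: in general $\|u\|_M^2=\sum_i \lambda_i p_i$ with $p_i$ the squared eigen-coordinates of $u$, while $\|v\|_{M^{-1}}^2=\sum_i \lambda_i^{-1} q_i$ with $q_i$ those of $v$, and nothing forces $p=q$. Your phrase ``the (common) whitened vector in the extremal configuration'' is precisely the unjustified claim that the supremum is attained when $p=q$.

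In fact the two-vector statement as printed is false. Take $M=\mathrm{diag}(1,4)$, $u=e_2$, $v=e_1$: then $\|u\|_M^2\,\|v\|_{M^{-1}}^2=4\cdot 1=4$, whereas $\dfrac{(\lambda_{\min}+\lambda_{\max})^2}{4\lambda_{\min}\lambda_{\max}}\,\|u\|^2\|v\|^2=\dfrac{25}{16}$, and $4\not\le \tfrac{25}{16}$. Strang's 1960 note actually proves the one-vector Kantorovich inequality (the case $u=v$), and that is exactly what your secant-plus-quadratic argument correctly establishes; the applications in the paper's proof of Theorem~\ref{th:jm2} use only vectors $u,v$ that are powers of $T_{i_k}$ applied to a common $r_k$, not arbitrary pairs. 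So your proof sketch is a clean and correct proof of the one-vector Kantorovich bound, but it cannot---and no argument can---prove the lemma in the generality in which it is stated.
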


\begin{lemma}
\label{lem:gower}
(Lemma 14 in \citep{gower2019adaptive}) If the relation $\textbf{Null}(P) \subset \textbf{Null}(M^T)$ holds for some matrix $M$ and positive semi-definite matrix $P \succeq 0$. Then, we have
\begin{align*}
\textbf{Null}(M) = \textbf{Null}(M^TPM), \quad  \textbf{Range}(M^T) = \textbf{Range}(M^TPM)
\end{align*}
\end{lemma}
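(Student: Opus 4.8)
The plan is to prove the null-space identity first and then obtain the range identity as a formal consequence via orthogonal complements. Throughout I would factor $P = P^{1/2} P^{1/2}$ using the symmetric positive semidefinite square root, which exists because $P \succeq 0$; this is the tool that converts the quadratic form $x^T M^T P M x$ into a genuine squared Euclidean norm.

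First I would establish $\textbf{Null}(M) = \textbf{Null}(M^T P M)$. The inclusion $\textbf{Null}(M) \subseteq \textbf{Null}(M^T P M)$ is immediate: if $Mx = 0$ then $M^T P M x = M^T P (Mx) = 0$. For the reverse inclusion, suppose $M^T P M x = 0$. Left-multiplying by $x^T$ gives $x^T M^T P M x = \|P^{1/2} M x\|^2 = 0$, hence $P^{1/2} M x = 0$ and therefore $P M x = P^{1/2}(P^{1/2} M x) = 0$, i.e. $Mx \in \textbf{Null}(P)$. Invoking the hypothesis $\textbf{Null}(P) \subseteq \textbf{Null}(M^T)$ then yields $M^T M x = M^T(Mx) = 0$; left-multiplying once more by $x^T$ gives $\|Mx\|^2 = 0$, so $Mx = 0$ and $x \in \textbf{Null}(M)$.

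Next I would deduce the range identity. Since $P = P^T$, the matrix $S = M^T P M$ is symmetric, so by the fundamental theorem of linear algebra $\textbf{Range}(S) = \textbf{Null}(S)^{\perp}$; likewise $\textbf{Range}(M^T) = \textbf{Null}(M)^{\perp}$. Because the null-space identity just proved gives $\textbf{Null}(M) = \textbf{Null}(S)$, taking orthogonal complements of both sides produces $\textbf{Range}(M^T) = \textbf{Range}(M^T P M)$, as claimed.

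The only genuinely nontrivial step is the reverse null-space inclusion, and it is precisely the single place the hypothesis $\textbf{Null}(P) \subseteq \textbf{Null}(M^T)$ enters: the quadratic-form argument alone only shows $Mx \in \textbf{Null}(P)$, which is strictly weaker than $Mx = 0$ when $P$ is merely semidefinite, and the containment hypothesis is exactly what bridges this gap by forcing $M^T M x = 0$ and hence $Mx = 0$. Everything else is routine, since the range identity needs no new ideas beyond the symmetry of $M^T P M$ and the standard orthogonality between range and null space.
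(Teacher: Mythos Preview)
Your proof is correct. The paper does not supply its own proof of this lemma: it is quoted as Lemma~14 of \citet{gower2019adaptive} and invoked as a black box inside the proof of Lemma~\ref{lem:range}, so there is nothing in the paper to compare your argument against.
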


\section{Proofs}
\label{sec:prf}

\begin{proof} (Lemma \ref{lem:range})
From the optimization problem \eqref{jml2}, we find that
\begin{align*}
    x^* = - G^{-1}A^T \lambda \in \textbf{Range}(G^{-1}A^T)
\end{align*}
for some $\lambda \in \R^m$. From the definition, $x_0 \in \textbf{Range}(G^{-1}A^T)$. Thus $r_0 \in \textbf{Range}(G^{-\frac{1}{2}}A^T)$. Now, assume that this identity holds for the first $k$ iterates. i.e.,
\begin{align*}
    r_{k-1}  \in \textbf{Range}(G^{-\frac{1}{2}}A^T), \quad r_k  \in \textbf{Range}(G^{-\frac{1}{2}}A^T)
\end{align*}
Now, from the momentum update formula, we have the following:
\begin{align*}
    r_{k+1} = r_k - \omega  \underbrace{G^{-\frac{1}{2}} A^T H(Ax-b)}_{ \in \ \textbf{Range}(G^{-\frac{1}{2}}A^T)} + \gamma \underbrace{(r_k-r_{k-1})}_{ \in \ \textbf{Range}(G^{-\frac{1}{2}}A^T)}
\end{align*}
Which implies $r_{k+1} \in \textbf{Range}(G^{-\frac{1}{2}}A^T)$. This proves the first part. Note, that
\begin{align*}
    \textbf{Null}(\mathbf{T}) = \textbf{Null} \left(G^{-\frac{1}{2}}A^T \sum \limits_{i=1}^{q} H_i A G^{-\frac{1}{2}}\right) =  \textbf{Null} \left(A^T \sum \limits_{i=1}^{q} H_i A G^{-\frac{1}{2}}\right) =  \textbf{Null} \left( A G^{-\frac{1}{2}}\right) 
\end{align*}
in the last equality, we used Lemma \ref{lem:gower} with $P = \sum \nolimits_{i=1}^{q} H_i, \ M = A$. Taking orthogonal complement, we have the required relation.
\end{proof}

\begin{proof} (Lemma \ref{lem:jm1})

We will first prove the first three identities. The upper bounds of \eqref{t:1},  \eqref{t:2}, and \eqref{t:3} follows from the fact that $T_{i_k}$ and $\E[T]$ are positive semi-definite matrices. Note that, as $T_{i_k}^{\frac{1}{2}} r_k \in \textbf{Range}(T_{i_k}^{\frac{1}{2}})$, by the Courant-Fisher Theorem, we have the lower bound of \eqref{t:1}. Similarly the lower bound of \eqref{t:2} follows from the fact that $T_{i_k}r_k \in \textbf{Range}( G^{-\frac{1}{2}} Z_{i_k}^{\frac{1}{2}})$. These arguments prove the first two identities of the Lemma. Now we have,
\begin{align}
\label{jm:10}
   \frac{1}{\mu_2} \leq  \frac{1}{\mu_2(i_k)}  \leq  \alpha_{i_k} = \frac{r_k^TT^2_{i_k} r_k}{r_k^TT^3_{i_k} r_k}  \leq   \frac{1}{\mu^+_1(i_k)} \leq \frac{1}{\mu^+_1}
\end{align}
Furthermore, with the choice $G = B$, we get $T^2_{i_k} = T_{i_k}$. That implies $\alpha_{i_k} = 1$. This proves the Lemma.
\end{proof}

\begin{proof} (Theorem \ref{lem:mu}) Using the definition of expectation from \eqref{jml20}, we have
{\allowdisplaybreaks
\begin{align}
\label{jml22}
 \E_i[f_i(x_k)] & =  \frac{1}{2 \binom{q}{\tau}} \sum\limits_{j = 0}^{q-\tau} \binom{\tau-1+j}{\tau-1}  \|x_k-x^*\|_{Z_{\underline{\mathbf{i_j}}}}^2  \leq \frac{\binom{q-1}{\tau-1}}{ 2\binom{q}{\tau}} \sum\limits_{j = 0}^{q-\tau}   r_k^T T_{\underline{\mathbf{i_j}}} r_k \nonumber \\
 & \leq \frac{\tau}{2 q} \sum\limits_{j = 1}^{q}  r_k^T T_j r_k  = \frac{\tau}{2 q} r_k^T \mathbf{T}r_k \leq \frac{ \tau \lambda_{\max}\left(\mathbf{T}\right)}{2q} \|r_k\|^2
 \end{align}}
Furthermore, the following holds
{\allowdisplaybreaks
\begin{align}
\label{jml23}
 \E_i[f_i(x_k) ] & =  \frac{1}{ \binom{q}{\tau}} \sum\limits_{j = 0}^{q-\tau} \binom{\tau-1+j}{\tau-1}  f_{\underline{\mathbf{i_j}}}(x_k)  \leq \max_{i \in \{1,2,...,q\}} f_i(x_k) \frac{1}{ \binom{q}{\tau}} \sum\limits_{j = 0}^{q-\tau} \binom{\tau-1+j}{\tau-1} \nonumber \\
 & = \frac{1}{2} \max_{i \in \{1,2,...,q\}} r_k T_i r_k \leq \frac{\max_{i \in \{1,2,...,q\}} \mu_2(i)}{2} \|r_k\|^2 \leq \frac{ \mu_2}{2} \|r_k\|^2 
 \end{align}}
here, we used column-sum property of Pascal’s triangle, i.e., $\sum\nolimits_{j = 0}^{q-\tau} \binom{\tau-1+j}{\tau-1} =  \binom{q}{\tau}$. Combining \eqref{jml22} and \eqref{jml23}, we get the upper bound of the proposed Lemma. Similarly, we have
{\allowdisplaybreaks
\begin{align}
\label{jml24}
 \E_i&[f_i(x_k)  ]  =  \frac{1}{2 \binom{q}{\tau}} \sum\limits_{j = 0}^{q-\tau} \binom{\tau-1+j}{\tau-1}  \|x_k-x^*\|_{Z_{\underline{\mathbf{i_j}}}}^2 \overset{\text{Lemma} \ \ref{lem:skmseq}}{\geq} \frac{1}{2 \binom{q}{\tau}} \sum\limits_{j = 0}^{q-\tau}  \frac{\sum\limits_{l = 0}^{q-\tau} \binom{\tau-1+l}{\tau-1}}{q-\tau +1} \|x_k-x^*\|_{Z_{\underline{\mathbf{i_j}}}}^2 \nonumber \\
 & = \frac{1}{2(q-\tau+1)} \sum\limits_{j = 0}^{q-\tau} \|x_k-x^*\|_{Z_{\underline{\mathbf{i_j}}}}^2  \geq \frac{1}{2 (q-\tau+1)} \min \left\{1, \frac{q-\tau+1}{q-s_k}\right\} \sum\limits_{j = 1}^{q} \|x_k-x^*\|_{Z_j} \nonumber \\
 &  = \frac{1}{2 \bar{q}_k} (x_k-x^*)^T G^{\frac{1}{2}} \mathbf{T} G^{\frac{1}{2}}(x_k-x^*)   \geq   \frac{ \lambda_{\min}^+\left(\mathbf{T}\right)}{2 \bar{q}_k}  \|r_k\|^2
 \end{align}}
 the last inequality follows from Lemma \ref{lem:range} along with the Courant-Fisher theorem. Similarly, for the capped rule, we have
 \begin{align}
 \label{cap1}
    \E[f_i(x) \ | \ i \sim \mathcal{C}(\theta, \tau_1,\tau_2)] & = \sum \limits_{j \in \mathcal{W}} p_j f_j(x) \geq \theta  \E[f_j(x) \ | \ j \sim \mathcal{G}(\tau_1)] + (1-\theta) \E[f_j(x) \ | \ j \sim \mathcal{G}(\tau_2)] \nonumber \\
    & \geq \theta \frac{ \lambda_{\min}^+\left(\mathbf{T}\right)}{ 2\bar{q}_k(\tau_1)} \|r_k\|^2  + (1-\theta) \frac{ \lambda_{\min}^+\left(\mathbf{T}\right)}{ 2\bar{q}_k(\tau_2)} \|r_k\|^2.
\end{align}
Similarly, we have
 \begin{align}
 \label{cap2}
    \E[f_i(x) \ | \ i \sim \mathcal{C}(\theta, \tau_1,\tau_2)]  = \sum \limits_{j \in \mathcal{W}} p_j f_j(x) \leq \max_{i \in \{1,2,...,q\}} f_i(x) \leq \frac{\mu_2}{2} \|r_k\|^2.
\end{align}
Combining \eqref{cap1} and \eqref{cap2}, we get the required result.
\end{proof}

\begin{proof} (Theorem \ref{th:jm2})
To prove the bound of \eqref{t:5}, we use the Cauchy-Swartrz inequality, i.e,
\begin{align}
\label{jm11}
  (r_k^TT^2_{i_k} r_k)^2 \leq \|T_{i_k}^{\frac{1}{2}}r_k\|^2  \|T_{i_k}^{\frac{3}{2}}r_k\|^2   = (r_k^TT_{i_k} r_k)(r_k^TT^3_{i_k} r_k)
\end{align}
Since, $T_{i_k} \succeq 0$, the relation $\mu_1(i_k) \|r_k\|^2 \leq r_k^TT_{i_k} r_k \leq \mu_2(i_k) \|r_k\|^2$ holds. Now, let's denote 
\begin{align*}
  \mathcal{H}(r_k) = (\mu_1(i_k)+\mu_2(i_k))r_k^TT_{i_k} r_k-r_k^TT^2_{i_k} r_k-\mu_1(i_k) \mu_2(i_k) \|r_k\|^2  
\end{align*}
Note that, considering the given conditions we can check that $\mathcal{H}(r_k) \geq 0$ holds for all $k \geq 1$. Then, we have the following:
\begin{align}
\label{jm12}
    (r_k^TT^2_{i_k} r_k) \|r_k\|^2 - (r_k^TT_{i_k} r_k)^2 & = \left(r_k^TT_{i_k} r_k-\mu_1(i_k) \|r_k\|^2\right) \left(\mu_2(i_k) \|r_k\|^2- r_k^TT_{i_k} r_k\right) - \|r_k\|^2 \mathcal{H}(r_k) \nonumber \\
    &  \leq \|r_k\|^4 \left(\frac{r_k^TT_{i_k} r_k}{\|r_k\|^2}-\mu_1(i_k)\right) \left(\mu_2(i_k)-\frac{r_k^TT_{i_k} r_k}{\|r_k\|^2}\right) \nonumber \\
    & \leq \|r_k\|^4 \ \frac{[\mu_2(i_k)-\mu_1(i_k)]^2}{4}
\end{align}
where, the last inequality follows from the fact that $\mu_1(i_k) \leq r_k^TT_{i_k} r_k/\|r_k\|^2 \leq \mu_2(i_k)$ holds. Now, replacing $r_k$ with $T_{i_k}^{\frac{1}{2}}r_k$ in equation \eqref{jm12} and simplifying further, we get
\begin{align}
\label{jm13}
    \frac{(r_k^TT_{i_k} r_k)(r_k^TT^3_{i_k} r_k)}{(r_k^TT^2_{i_k} r_k)^2 } - 1 \leq \frac{[\mu_2(i_k)-\mu_1(i_k)]^2}{4} \frac{(r_k^TT_{i_k} r_k)^2}{(r_k^TT^2_{i_k} r_k)^2} \leq \frac{[\mu_2(i_k)-\mu_1(i_k)]^2}{4 [\mu_1^+(i_k)]^2} \leq \frac{\left(\sigma_{i_k}\right)^2}{4}
\end{align}
where, the last inequality follows from the fact that $\mu_1(i_k) \geq 0$. Combining \eqref{jm11} and \eqref{jm13}, we get the result of \eqref{t:5}
Now, assume $T_{i_k} \succ 0$ for each $i_k$. This implies $0 < \mu_1(i_k) = \mu_1^+(i_k)$ and $T^{-1}(i_k)$ exists for each $i_k \geq 1$. Now, denote $M = T_{i_k}, \ u = T_{i_k} r_k, \ v = T^{\frac{1}{2}}_{i_k} r_k$. Substituting the above parameter values in Lemma \ref{lem:strang}, we get the following:
\begin{align*}
 \frac{ \|u\|^2_M \|v\|^2_{M^{-1}} }{\|u\|^2 \|v\|^2} =  \frac{ \|T_{i_k} r_k\|^2_{T_{i_k}} \|T^{\frac{1}{2}}_{i_k} r_k\|^2_{T_{i_k}^{-1}} }{\|T_{i_k} r_k\|^2 \|T^{\frac{1}{2}}_{i_k} r_k\|^2} = \frac{(r_k^Tr_k)(r_k^T T_{i_k}^3 r_k)}{(r_k^T T_{i_k} r_k)(r_k^T T_{i_k}^2 r_k) }  \leq  \frac{[\mu_2(i_k)+\mu_1(i_k)]^2}{4 \mu_1(i_k) \mu_2(i_k)} =  \frac{\left(1+ \sigma_{i_k}\right)^2}{4\sigma_{i_k}}
\end{align*}
this proves the bound of \eqref{t:6}. In a similar fashion, take $M = T_{i_k}, \ u = v= T_{i_k} r_k$ in Lemma \ref{lem:strang}. That gives us,
\begin{align*}
  \frac{ \|u\|^2_M \|v\|^2_{M^{-1}} }{\|u\|^2 \|v\|^2} =  \frac{ \|T_{i_k} r_k\|^2_{T_{i_k}} \|T_{i_k} r_k\|^2_{T_{i_k}^{-1}} }{\|T_{i_k} r_k\|^4} =   \frac{(r_k^TT_{i_k}r_k)(r_k^T T_{i_k}^3 r_k)}{(r_k^T T_{i_k}^2 r_k)^2 }  \leq  \frac{[\mu_2(i_k)+\mu_1(i_k)]^2}{4 \mu_1(i_k) \mu_2(i_k)} =  \frac{\left(1+ \sigma_{i_k}\right)^2}{4\sigma_{i_k}} 
\end{align*}
This is precisely what we claimed in \eqref{t:7}. 
\end{proof}

\begin{proof} (Theorem \ref{th:ssd1})
From the update formula we have,
  \begin{align*}
    x_{k+1} - x^*  = x_k - x^* + \omega \alpha_{i_k} G^{-1} Z_{i_k} (x^*-x_k) = \left(I - \omega \alpha_{i_k} G^{-1} Z_{i_k} \right) (x_k-x^*)
  \end{align*}
Taking expectation with respect to the index ${i_k}$, we get
conditioned on $x_k$, we get,
\begin{align*}
   \E_{i_k}\left[ x_{k+1} - x^*\right] = \left(I - \omega G^{-1} \E_{i_k}\left[\alpha_{i_k}  Z_{i_k}\right] \right) (x_k-x^*)   
\end{align*}
Taking expectation again and using tower property we get
\begin{align*}
   \E\left[ x_{k+1} - x^*\right]   = \E\left[ \E_{i_k}\left[ x_{k+1} - x^* \right] \right]  &  = \E \left[\left(I - \omega G^{-1} \E_{i_k}\left[\alpha_{i_k}  Z_{i_k}\right] \right) (x_k-x^*) \right] \\
   & = \left(I - \omega G^{-1} \E_{i_k}\left[\alpha_{i_k}  Z_{i_k}\right] \right) \E \left[x_k-x^*\right]   
\end{align*}
Taking $G-$ norm in both sides, we get the following
\begin{align*}
   \big \|\E\left[ x_{k+1} - x^*\right] \big \|_G^2 & = \big \| \left(I - \omega G^{-1} \E_{i_k}\left[\alpha_{i_k}  Z_{i_k}\right] \right) \E \left[x_k-x^*\right]   \big \|_G^2   \\
   & \leq \big \| I - \omega G^{-1} \E_{i_k}\left[\alpha_{i_k}  Z_{i_k} \right]  \big \|_G^2 \big \|\E\left[ x_{k} - x^*\right] \big \|_G^2
\end{align*}
Now, from the definition of $G-$ norm we get
\begin{align*}
  \big \| I - \omega G^{-1} & \E_{i_k}[\alpha_{i_k}  Z_{i_k} ]  \big \|_G^2  = \max_{\|G^{\frac{1}{2}}u\|_2 = 1}  \big \| \left( I - \omega G^{-1} \E_{i_k}\left[\alpha_{i_k}  Z_{i_k} \right]   \right) u \big \|_2^2 \\
  & = \max_{\|v\|_2 = 1}  \big \| \left(I -\omega G^{-\frac{1}{2}} \E_{i_k}\left[\alpha_{i_k}  Z_{i_k} \right] G^{-\frac{1}{2}} \right) v \big \|_2^2  = \lambda_{\max}^2\left(I -\omega \E_{i_k}\left[\alpha_{i_k}  T_{i_k}\right]  \right)
\end{align*}
This proves the Theorem.
\end{proof}

\begin{proof} (Theorem \ref{th:jm4})
Take, $r_k = G^{\frac{1}{2}} (x_k-x^*)$. Then from the update formula we get the following:
\begin{align*}
   \|r_{k+1}\|^2  & =  \big \langle \left(I - \omega \alpha_{i_k} G^{-1} Z_{i_k} \right) (x_k-x^*) , \left(I - \omega \alpha_{i_k} G^{-1} Z_{i_k} \right) (x_k-x^*) \big \rangle_G \nonumber \\
    & = \Big \langle \left(I - \omega \alpha_{i_k} G^{-\frac{1}{2}}Z_{i_k} G^{-\frac{1}{2}} \right) r_k , \left(I - \omega \alpha_{i_k} G^{-\frac{1}{2}}Z_{i_k} G^{-\frac{1}{2}} \right) r_k \Big \rangle  = r_k^T \left(I-\omega \alpha_{i_k} T_{i_k}\right)^2  r_k
\end{align*}
Now, using the expression of $\alpha_{i_k}$ we have the following:
\begin{align}
r_k^T \left(I-\omega \alpha_{i_k} T_{i_k}\right)^2 r_k & = \|r_k\|^2 - 2 \omega \alpha_{i_k} r_k^T T_{i_k} r_k + \omega^2 \alpha^2_{i_k} r_k^T T^2_{i_k} r_k \nonumber \\
    & = \|r_k\|^2 - 2 \omega \frac{(r_k^T T_{i_k} r_k) (r_k^T T^2_{i_k} r_k) }{r_k^T T^3_{i_k} r_k}  + \omega^2 \frac{ (r_k^T T^2_{i_k} r_k)^3}{(r_k^T T^3_{i_k} r_k)^2}  \nonumber \\
    & = \|r_k\|^2  - \frac{(r_k^T T_{i_k} r_k) (r_k^T T^2_{i_k} r_k) }{r_k^T T^3_{i_k} r_k}  \left (2 \omega - \omega^2 \frac{(r_k^T T^2_{i_k} r_k)^2}{(r_k^T T^3_{i_k} r_k) (r_k^T T_{i_k} r_k)} \right) \nonumber \\
    & \leq  \|r_k\|^2  - \frac{(r_k^T T_{i_k} r_k) (r_k^T T^2_{i_k} r_k) }{r_k^T T^3_{i_k} r_k}   (2 \omega - \omega^2 ) \label{jm15} \\
    & \leq \|r_k\|^2  -  (2 \omega - \omega^2) \frac{ \|x_k-x^*\|^2_{Z_{i_k}}}{\mu_2} \label{jm16}
\end{align}
here, we used the bound for $\alpha_{i_k}$. Taking expectation in \eqref{jm16} we have the following
\begin{align}
\label{jm17}
    \E \left[\|r_{k+1}\|^2 \ | \ i_k \sim \mathcal{R}\right] 
    & \leq \|r_k\|^2  -  \frac{2(2 \omega - \omega^2)}{\mu_2} f(x_k) \leq (1- \frac{(2 \omega - \omega^2) \lambda^+_1}{\mu_2}) \|r_k\|^2
\end{align}
Taking expectation again and using the tower property, we get the result when $T_{i_k} \succeq 0$. In the second part, we assumed $T_{i_k} \succ 0$, from \eqref{jm15} we have the following:
\begin{align}
\label{jm18}
 \|r_k\|^2 \left[1- (2 \omega - \omega^2) \frac{\|r_k\|^2_{T_{i_k}} \|r_k\|^2_{T^2_{i_k}}}{\|r_k\|^2\|r_k\|^2_{T^3_{i_k}}}  \right]  \leq \|r_k\|^2 \left[1- (2 \omega - \omega^2) \frac{4 \mu_1(i_k) \mu_2(i_k)}{\left[\mu_2(i_k)+ \mu_1(i_k)\right]^2}\right]
\end{align}
here, we used Lemma \ref{lem:strang} with the choice $u = T^{\frac{1}{2}}_{i_k}r_k$ and $v = T_{i_k}r_k$. Now taking expectation with respect to index $i_k$ and considering \eqref{jm15} and \eqref{jm18}, we get
\begin{align}
\label{jm19}
    \E \left[\|r_{k+1}\|^2 \ | \ i_k \sim \mathcal{R}\right] 
    &  \leq  \|r_k\|^2 \left\{1- 4(2 \omega - \omega^2) \E \left[ \frac{ 4\sigma_{i} }{\left(1+ \sigma_{i}\right)^2} \right]\right\}
\end{align}
Taking expectation again in \eqref{jm19} and using the tower property we get the result. Now, to prove the average iterate result, first note that from \eqref{jm17} we have the following:
\begin{align}
\label{jm14}
    \frac{2(2\omega - \omega^2)}{\mu_2} \sum \limits_{l=0}^{k-1} \E[f(x_k)] \leq \sum \limits_{l=0}^{k-1} \left(\E[\|r_k\|^2] - \E[\|r_{k+1}\|^2]\right) \leq \|r_0\|^2 = \|x_0-x^*\|^2_G
\end{align}
Therefore, we have
\begin{align*}
  \E[\|\Tilde{x}_k-x^*\|_G^2]  & =  \E \left[\Big \| \frac{1}{k} \sum \limits_{l=1}^{k} \left(x_l-x^*\right)\Big \|^2_G\right] \leq  \E \left[\frac{1}{k} \sum \limits_{l=1}^{k} \big \| x_l-x^*\big \|^2_G\right] = \frac{1}{k} \sum \limits_{l=1}^{k} \E[\|e_l\|^2] \nonumber \\
  & \leq \frac{2}{k \lambda_1^+} \sum \limits_{l=1}^{k} \E[f(x_l)] \leq \frac{\mu_2 \|x_0-x^*\|^2_G }{\omega k \lambda_1^+(2-\omega)}.
\end{align*}
This proves the average iterate result of Theorem \ref{th:jm4}.
\end{proof}

\begin{proof} (Theorem \ref{th:ssd3})
From the update formula, we have the following:
\begin{align}
\label{jm20}
  \|x_{k+1}-x^*\|_{Z_{i_k}}^2 & = \|x_{k}-x^* - \omega \alpha_{i_k} G^{-1}Z_{i_k}(x_k-x^*) \|_{Z_{i_k}}^2 \nonumber \\
  & = \|x_{k}-x^*\|_{Z_{i_k}}^2 - 2 \alpha_{i_k} r_k^T T_{i_k} r_k + \omega^2 \alpha_{i_k}^2  r_k^T T^3_{i_k} r_k \nonumber \\
  & = \|x_{k}-x^*\|_{Z_{i_k}}^2 - \alpha_{i_k} (2\omega-\omega^2)  r_k^T T^2_{i_k} r_k
\end{align}
Now, the above identity can be simplified as follows:
\begin{align}
\label{jm21}
   \frac{\|x_{k+1}-x^*\|_{Z_{i_k}}^2}{\|x_{k}-x^*\|_{Z_{i_k}}^2}  = 1 -  \frac{(2\omega-\omega^2)(r_k^T T_{i_k}^2 r_k)^2 }{(r_k^TT_{i_k}r_k)(r_k^T T_{i_k}^3 r_k)} \leq 1-  \frac{\mu^+_1(i_k) (2\omega-\omega^2)}{\mu_2(i_k)} \leq 1-  \frac{(2\omega-\omega^2)}{\sigma_{i_k}}
\end{align}
Similarly, considering \eqref{jm20} again and using \eqref{t:7}, we get
\begin{align}
\label{jm22}
 \frac{\|x_{k+1}-x^*\|_{Z_{i_k}}^2}{\|x_{k}-x^*\|_{Z_{i_k}}^2}  = 1 - (2\omega-\omega^2) \frac{(r_k^T T_{i_k}^2 r_k)^2 }{(r_k^TT_{i_k}r_k)(r_k^T T_{i_k}^3 r_k)} \leq 1-  \frac{4 (2\omega-\omega^2)}{4+ \sigma_{i_k}^2}
\end{align}
Now, taking expectation in \eqref{jm21} and \eqref{jm22}, then substituting the results in \eqref{jm20} we get,
 \begin{align*}
\E \left[\frac{f_{i_k}(x_{k+1})}{f_{i_k}(x_{k})}\right]=  \E \left[\frac{\|x_{k+1}-x^*\|_{Z_{i_k}}^2}{\|x_{k}-x^*\|_{Z_{i_k}}^2}\right]  \leq 1-  \frac{4 (2 \omega - \omega^2) }{\min \{4 \E[\sigma_i], 4 + \E[\sigma_i^2]\}}
\end{align*}
this proves the first part of the Theorem. Now, if $T_{i_k} \succ 0$ for each $i_k$, then considering \eqref{jm20} along with \eqref{t:7}, we get the following:
\begin{align}
\label{jm23}
 \frac{\|x_{k+1}-x^*\|_{Z_{i_k}}^2}{\|x_{k}-x^*\|_{Z_{i_k}}^2}  = 1 - (2\omega-\omega^2) \frac{(r_k^T T_{i_k}^2 r_k)^2 }{(r_k^TT_{i_k}r_k)(r_k^T T_{i_k}^3 r_k)} \leq 1- \frac{ 4\sigma_{i_k} (2 \omega - \omega^2)}{\left(1+ \sigma_{i_k}\right)^2}
\end{align}
Taking expectation in \eqref{jm23}, we get the required result. Since, we have the following bound
\begin{align}
\label{jm24}
    \E[f(x_{k+1})] = \frac{1}{2} r_{k+1}^T \E[T] r_{k+1} \leq \frac{\lambda_2}{2} \ \|r_{k+1}\|^2
\end{align}
Now, considering the first part results along with \eqref{jm24}, we get the required bounds of the quantity $\E[f(x_{k+1})]$. Furthermore, considering Jensen's inequality along with \eqref{jm14}, we get
\begin{align*}
  \E[f(\Tilde{x}_k)]   & \leq  \E \left[ \frac{1}{k} \sum \limits_{l=1}^{k} f(x_l)\right] = \frac{1}{k} \sum \limits_{l=1}^{k} \E[f(x_l)] \leq   \frac{\mu_2 \|x_0-x^*\|^2_G }{2\omega k (2-\omega)}.
\end{align*}
This proves the Theorem.
\end{proof}

\begin{proof} (Theorem \ref{th:momentum1})
From the update formula of the proposed momentum algorithm, we have,
\allowdisplaybreaks{
\begin{align*}
  \|r_{k+1}\|^2 =  \|x_{k+1}- x^*\|^2_G & =  \big \| x_k - \omega \alpha_{i_k} \ \nabla^{G} f_{i_k}(x_k) + \gamma (x_k-x_{k-1})- x^* \big \|^2_G \nonumber \\
    & =  \underbrace{\big \|x_k - \omega \alpha_{i_k} \ \nabla^{G} f_{i_k}(x_k) - x^* \big \|_G^2}_{M_1} + \gamma^2 \underbrace{\|x_{k}-x_{k-1}\|_G^2}_{M_2} \nonumber \\
    & + 2 \gamma  \underbrace{\big \langle x_k - x^* \ , \ x_{k}-x_{k-1}\big \rangle_G }_{M_3}- 2 \gamma \omega  \underbrace{ \alpha_{i_k}\big \langle \nabla^{G} f_{i_k}(x_k) \ , \ x_{k}-x_{k-1}\big \rangle_G }_{M_4} \nonumber \\
    & = M_1 + \gamma^2 M_2 + 2 \gamma M_3- 2 \gamma \omega M_4
\end{align*}}
Here, we assume that at $k^{th}$ iteration, the sketching matrix matrix $S_{i_k}$ is chosen. Taking expectation with respect to index $i_k$, we get
\allowdisplaybreaks{
\begin{align}
\label{jm1}
  \E_{i_k}[\|r_{k+1}\|^2]  = \E_{i_k}[M_1 ]  + \gamma^2  \E_{i_k}[M_2]  + 2 \gamma  \E_{i_k}[M_3] - 2 \gamma \omega  \E_{i_k}[M_4 ]
\end{align}}
Now, the first term of \eqref{jm1} can be simplified as follows:
\begin{align}
\label{jm2}
\E_{i_k}[M_1 ]  & =  \E_{i_k} \left[\big \langle \left(I - \omega \alpha_{i_k} G^{-1} Z_{i_k} \right) (x_k-x^*) , \left(I - \omega \alpha_{i_k} G^{-1} Z_{i_k} \right) (x_k-x^*) \big \rangle_G \right] \nonumber \\
    & \leq  \|r_k\|^2 - \frac{2 \omega (2-\omega)}{\mu_2} f(x_k)
\end{align}
We can simplify the second term of \eqref{jm1} as follows:
\allowdisplaybreaks{
\begin{align}
\label{jm3}
\E_{i_k}[M_2 ] =   \|r_k -r_{k-1} \|^2 \leq 2  \|r_k \|^2 + 2 \|r_{k-1} \|^2
\end{align}}
Now, we know the following parallelogram identity
\begin{align}
\label{para}
    2 \langle u, v \rangle_G = \|u\|^2_G + \|v\|^2_G - \|u-v\|^2_G
\end{align}
Take, $u = x_k-x^*$ and $v = x_k-x_{k-1}$. Then using the above identity we get,
\begin{align}
\label{jm4}
  2 \E_{i_k}[M_3 ]  & =  \big \langle x_k - x^* , x_k - x_{k-1} \big \rangle_G  =  \|x_k - x^* \|^2_G + \|x_k-x_{k-1}\|^2_G - \|x_{k-1} - x^* \|^2_G \nonumber \\
    & \leq 3 \|x_k - x^* \|^2_G +  \|x_{k-1} - x^* \|^2_G =  3 \|r_k\|^2 +  \|r_{k-1}\|^2
\end{align}
Now, we need to simplify the fourth term of \eqref{jm1}. Now, we can simplify $M_4$ as follows:
\begin{align*}
    M_4 & = \alpha_{i_k} \big \langle G G^{-1}  Z_{i_k} (x_k-x^*) \ , \ x_{k}-x_{k-1}\big \rangle \nonumber \\
   & = \alpha_{i_k} \big \langle \nabla f_i(x_k) \ , \ x_{k}-x_{k-1}\big \rangle  \geq \alpha_{i_k} f_i(x_k)- \alpha_{i_k} f_i(x_{k-1}) \geq \frac{f_i(x_k)}{\mu_2}- \frac{f_i(x_{k-1})}{\mu^+_1}
\end{align*}
here, we used the convexity of the function $f_i(x)$. Now, the fourth term can be simplified as follows:
\begin{align}
\label{jm5}
   2\E_{i_k}[M_4]   \geq \frac{2 f(x_k)}{\mu_2}- \frac{2 f(x_{k-1})}{\mu^+_1}
\end{align}
 Now, substituting the expressions of \eqref{jm2}, \eqref{jm3}, \eqref{jm4} and \eqref{jm5} in \eqref{jm1}, we get
\allowdisplaybreaks{
\begin{align}
\label{jm6}
   \E_{i_k}[\|r_{k+1}\|^2]  & \leq (1+3 \gamma + 2 \gamma^2) \|r_k\|^2+ (\gamma + 2 \gamma^2) \|r_{k-1}\|^2 + \frac{2\gamma \omega}{\mu^+_1}  f(x_{k-1}) - \frac{2\gamma \omega + 4 \omega - 2 \omega^2}{\mu_2} f(x_k)
\end{align}}
Now, adding $\frac{2 \zeta \omega}{\mu_2} f(x_k)$ in both sides of \eqref{jm6}, we get
\allowdisplaybreaks{
\begin{align}
\label{jm7}
   \E_{i_k}[\|r_{k+1}\|^2 & ]  + \frac{2 \zeta \omega}{\mu_2} f(x_k) \leq (1+3 \gamma + 2 \gamma^2) \|r_k\|^2+ (\gamma + 2 \gamma^2) \|r_{k-1}\|^2 + \frac{2\gamma \omega}{\mu^+_1} f(x_{k-1}) \nonumber \\
   & - \frac{\left(2\gamma \omega + 4 \omega - 2 \omega^2 - 2 \zeta \omega\right)}{\mu_2} f(x_k) \nonumber \\
   & \leq \left(1+3 \gamma + 2 \gamma^2 - \frac{\omega}{\mu_2}(\gamma+2-\omega - \zeta) \lambda^+_1 \right) \|r_k\|^2+ \left(\gamma + 2 \gamma^2\right) \|r_{k-1}\|^2 \nonumber  \\
   & + \frac{2\xi \omega}{\mu^+_1} f(x_{k-1}) + \frac{\left(2\gamma \omega  - 2\omega \xi\right)}{\mu^+_1} f(x_{k-1}) \nonumber  \\
   & \leq \left(1+3 \gamma + 2 \gamma^2 - \frac{\omega}{\mu_2}(\gamma+2-\omega - \zeta) \lambda^+_1 \right) \|r_k\|^2 + \frac{2\xi \omega}{\mu^+_1} f(x_{k-1}) \nonumber  \\
   & + \left(\gamma + 2 \gamma^2 + \frac{\omega (\gamma- \xi)}{\mu^+_1} \lambda_2 \right) \|r_{k-1}\|^2 \nonumber  \\
   & = \phi_1 \|r_k\|^2 + \phi_2 \|r_{k-1}\|^2 + \frac{2\xi \omega}{\mu^+_1} f(x_{k-1}) 
\end{align}}
Here, we used the fact $\gamma \geq \max \left\{\xi, \zeta-2+\omega\right\}$. Taking expectation again in \eqref{jm7} and using the tower property, we get the following:
\allowdisplaybreaks{
\begin{align}
\label{jm8}
   \E[\|r_{k+1}\|^2]  + \frac{2 \zeta \omega}{\mu_2} \E[f(x_k)] \leq  \phi_1 \E[\|r_k\|^2] + \phi_2 \E[\|r_{k-1}\|^2] + \frac{2\xi \omega}{\mu^+_1} \E[f(x_{k-1})] 
\end{align}}
Now, the parameter $\delta$ satisfies the following relations:
\begin{align}
    \label{l1}
    \xi \leq \frac{\zeta \mu^+_1 (\delta+\phi_1)}{\mu_2}, \quad \text{and} \quad  \phi_2 \leq \delta (\delta+\phi_1)
\end{align}
Considering equations \eqref{jm8} and \eqref{l1}, we have the following:
\allowdisplaybreaks{
\begin{align}
\label{l2}
 \E\left[ \mathcal{V}_{k+1}\right] & =  \E[\|r_{k+1}\|^2]  + \delta \E[\|r_k\|^2]+ \frac{2 \zeta \omega}{\mu_2} \E[f(x_k)] \nonumber \\
 & \leq  (\phi_1+ \delta) \E[\|r_k\|^2] + \phi_2 \E[\|r_{k-1}\|^2] + \frac{2\xi \omega}{\mu^+_1} \E[f(x_{k-1})]  \nonumber \\
 & \leq  (\phi_1+ \delta) \E[\|r_k\|^2] + \delta (\delta+\phi_1) \E[\|r_{k-1}\|^2] +  \frac{2 \omega \zeta (\delta+\phi_1)}{\mu_2} \E[f(x_{k-1})] \nonumber \\
 & =  (\phi_1+ \delta) \left[ \E[\|r_k\|^2]  + \delta \E[\|r_{k-1}\|^2]+ \frac{2 \zeta \omega}{\mu_2} \E[f(x_{k-1})]\right] = \rho \E\left[ \mathcal{V}_{k}\right] 
\end{align}}
This proves the result. Finally, we have to show that $\rho < 1$. Using the assumption $\phi_1 + \phi_2 < 1$, we have
\begin{align*}
    \rho = \max \left\{\frac{\xi \mu_2}{\zeta \mu^+_1}, \frac{\phi_1+ \sqrt{\phi_1^2+4 \phi_2}}{2} \right\} < \max \left\{1, \frac{\phi_1+ \sqrt{\phi_1^2+4 (1-\phi_1)}}{2} \right\} = 1
\end{align*}
This proves Theorem \ref{th:momentum1}.
\end{proof}

\begin{proof} (Theorem \ref{th:cesaro})
First, let us  define $\vartheta_l = \frac{\gamma}{1-\gamma}[x_{l}-x_{l-1}], \quad  \chi_l = \|x_l+\vartheta_l-x^*\|_G^2$ for any natural number $l \geq 1$. Also assume that the sketching matrix $S_{i_l}$ is chosen at iteration $l$. Now, using the update formula of \eqref{jml16}, we get
\begin{align}
\label{ces:1}
    \chi_{l+1} & = \|x_{l+1}+\vartheta_{l+1}-x^*\|_G^2   = \big \| x_l + \vartheta_l - \frac{\omega \alpha_{i_l}}{1-\gamma} \nabla^{G} f_{i_l}(x_l)-x^* \big \|_G^2 \nonumber \\
    & =  \|x_l+\vartheta_l-x^*\|_G^2 + \frac{\omega^2 \alpha^2_{i_l}}{(1-\gamma)^2} \|\nabla^{G} f_{i_l}(x_l)\|_G^2  - \frac{2 \omega \alpha_{i_l}}{1-\gamma}   \big \langle x_l+\vartheta_l-x^* \ ,\  \nabla^{G} f_{i_l}(x_l) \big \rangle_G \nonumber \\
    & = \chi_l + \frac{2 \omega^2 }{(\mu^+_1)^2 (1-\gamma)^2} f_{i_l}(x_l) - \frac{2 \omega }{1-\gamma}  \underbrace{\alpha_{i_l} \big \langle x_l+\vartheta_l-x^* \ ,\  \nabla^{G} f_{i_l}(x_l) \big \rangle_G}_{J} .
\end{align}
The third term can be simplified as follows:
\begin{align}
  \label{ces:3}
   - \frac{2\omega}{1-\gamma} J & = -\frac{2\omega}{1-\gamma} \big \langle x_l-x^* , \alpha_{i_l} Z_{i_l}(x_l-x^*) \big \rangle + \frac{2\omega \gamma}{(1-\gamma)^2} \big \langle x_{l-1}-x_l , \alpha_{i_l} Z_{i_l}(x_l-x^*) \big \rangle \nonumber \\
  & \leq  - \frac{4\omega}{1-\gamma} \alpha_{i_l} f_i(x_l) +  \frac{2\omega \gamma}{(1-\gamma)^2} \alpha_{i_l} \left[f_i(x_{l-1}) -  f_i(x_{l}) \right] \nonumber \\
  & \leq  - \frac{4\omega}{\mu_2 (1-\gamma)}  f_i(x_l) +  \frac{2\omega \gamma}{\mu^+_1 (1-\gamma)^2} f_i(x_{l-1}) - \frac{2\omega \gamma}{\mu_2 (1-\gamma)^2}  f_i(x_{l}) .
\end{align}
Taking expectation on both sides of \ref{ces:1} with respect to index $i_l$, we get
\begin{align}
\label{ces:2}
   \E_{i_l}[ \chi_{l+1}] & \leq \chi_l + \frac{2 \omega^2 f(x_l) }{\mu_2^2 (1-\gamma)^2} - \frac{2 \omega \E_{i_l}[J]}{1-\gamma}    \leq \chi_l + \frac{2\omega \gamma f(x_{l-1})}{\mu^+_1 (1-\gamma)^2}  - \frac{2\omega \gamma  f(x_{l})}{\mu^+_1 (1-\gamma)^2} - \varpi f(x_l)
\end{align}
where, the constant $\varpi$ is taken as
\begin{align*}
   \varpi & =   \frac{4\omega}{\mu_2 (1-\gamma)}  -  \frac{2\omega^2}{\mu_2^2(1-\gamma)^2}+ \frac{2\omega \gamma}{\mu_2 (1-\gamma)^2}- \frac{2\omega \gamma}{\mu^+_1 (1-\gamma)^2} =  \frac{2 \omega \left[2- \frac{\omega}{\mu_2}- \gamma \left(1+\frac{\mu_2}{\mu^+_1}\right) \right]}{\mu_2 (1-\gamma)^2}  >  0.
\end{align*}
Simplifying equation \eqref{ces:2} further, we get
\begin{align}
    \label{ces:4}
   \E_{i_l}[ \chi_{l+1} ] + \frac{2\omega \gamma}{\mu^+_1 (1-\gamma)^2} f(x_l) + \varpi f(x_l) \ \leq \  \chi_{l} + \frac{2\omega \gamma}{\mu^+_1 (1-\gamma)^2} f(x_{l-1}),
\end{align}
Taking expectation again in \eqref{ces:4} and using the tower property of expectation, we get the following recurrence
\begin{align}
\label{ces:6}
    y_{l+1} + \varpi \E[f(x_l)] \leq y_l, \quad l = 1,2,3...
\end{align}
with the definition: $y_l =\E[\chi_{l}] + \frac{2\omega \gamma}{\mu^+_1 (1-\gamma)^2} \E [f(x_{l-1})] $. Summing up \eqref{ces:6} 
for $l \in [1,k]$, we have
\begin{align}
    \label{ces:7}
    \sum \limits_{l=1}^{k} \E [f(x_l)] \ \leq \ \frac{y_1-y_{k+1}}{\varpi} \ \leq \ \frac{y_1}{\varpi}
\end{align}
Furthermore, using the convexity property of function $f$, we get
\begin{align}
\label{ces:8}
    \E \left[f(\bar{x_k})\right] = \E \left[f\left(\sum \limits_{l=1}^{k} \frac{x_l}{k}\right)\right] \ \leq \ \E \left[\frac{1}{k} \sum \limits_{l=1}^{k}f(x_l)\right] \ = \ \frac{1}{k}  \sum \limits_{l=1}^{k} \E [f(x_l)] \ \overset{\eqref{ces:7}}{\leq}  \frac{y_1}{\varpi k}
\end{align}
From construction, we have $x_0=x_1$. Now, it can easily check that, $\vartheta_1 = 0$ and $\chi_1 = \|x_0 -x^*\|_G^2$. Therefore, from the definition of sequence $y_1$, we get
\begin{align*}
   y_1  =\E[\chi_{1}] + \frac{2\omega \gamma}{\mu^+_1 (1-\gamma)^2} \E [f(x_{0})]  = \ \|x_0 -x^*\|_G^2  + \frac{2\omega \gamma}{\mu^+_1 (1-\gamma)^2} f(x_0).
\end{align*}
Finally, substituting the values of $y_1$ and $\varpi$ in \eqref{ces:8}, we have the following
\begin{align*}
    \E \left[f(\bar{x}_k)\right] \leq \frac{ \mu^+_1 \mu_2 (1-\gamma)^2 \ \|x_0 -x^*\|_G^2 + 2 \gamma \omega \mu_2 f(x_0)}{2 \omega k \left(2\mu^+_1 \mu_2 - \gamma \mu^+_1 \mu_2 - \gamma \mu_2^2 -\omega \mu^+_1 \right)}.
\end{align*}
This proves the Theorem.
\end{proof}



\section{Additional Experiments}
\label{appendix:exp}

\begin{figure}[htbp]
  \vspace{- 10 pt}
    \includegraphics[scale = 0.35]{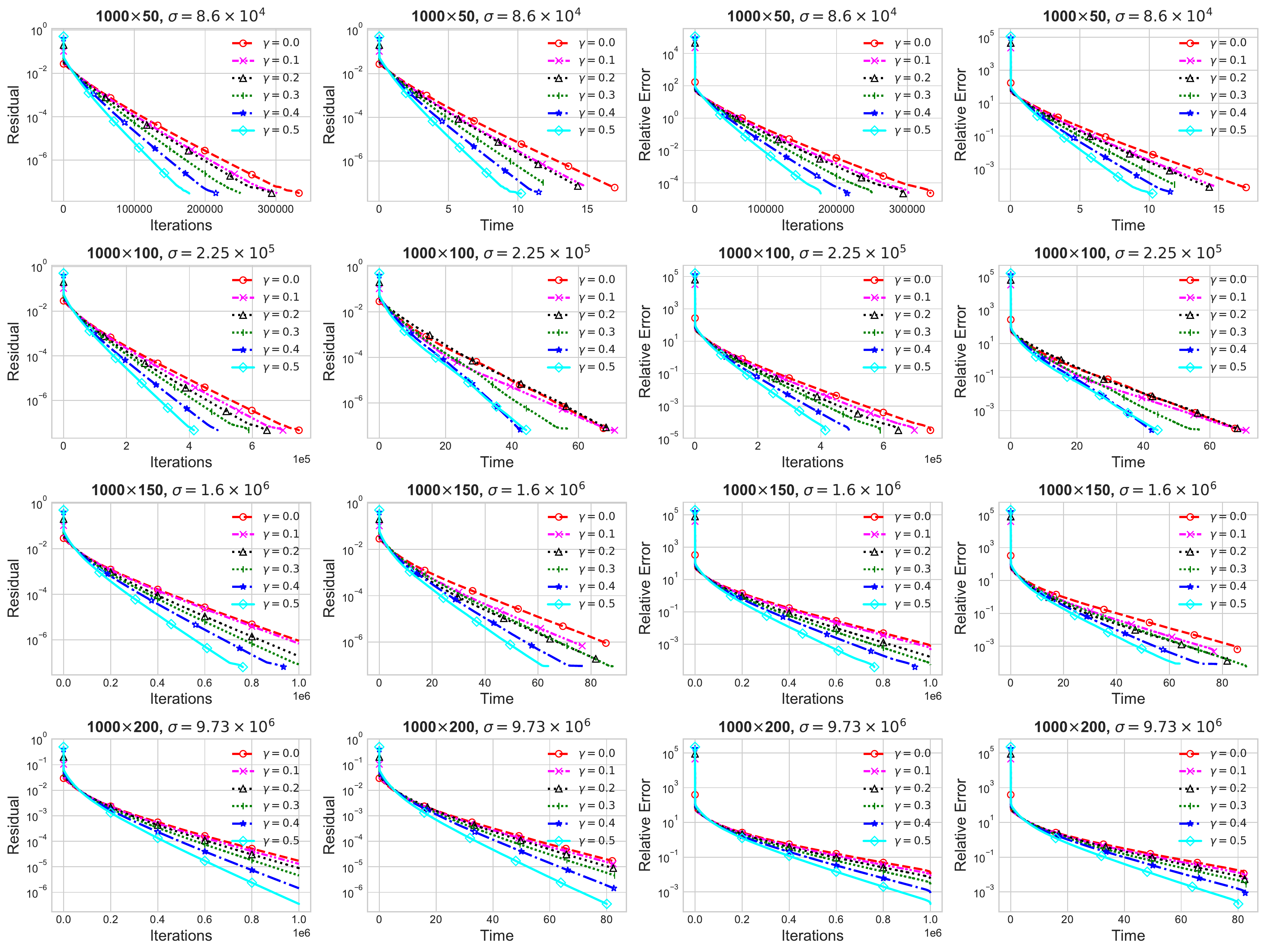}
    \caption{GK with momentum (sketch Sample size, $\tau = 5$): comparison among momentum variants on Gaussian data, residual error and relative error vs CPU time and No. of iterations.}
    \label{fig:55}
      \vspace{- 10 pt}
\end{figure}

\newpage
\begin{figure}[ht!]
    \includegraphics[scale = 0.37]{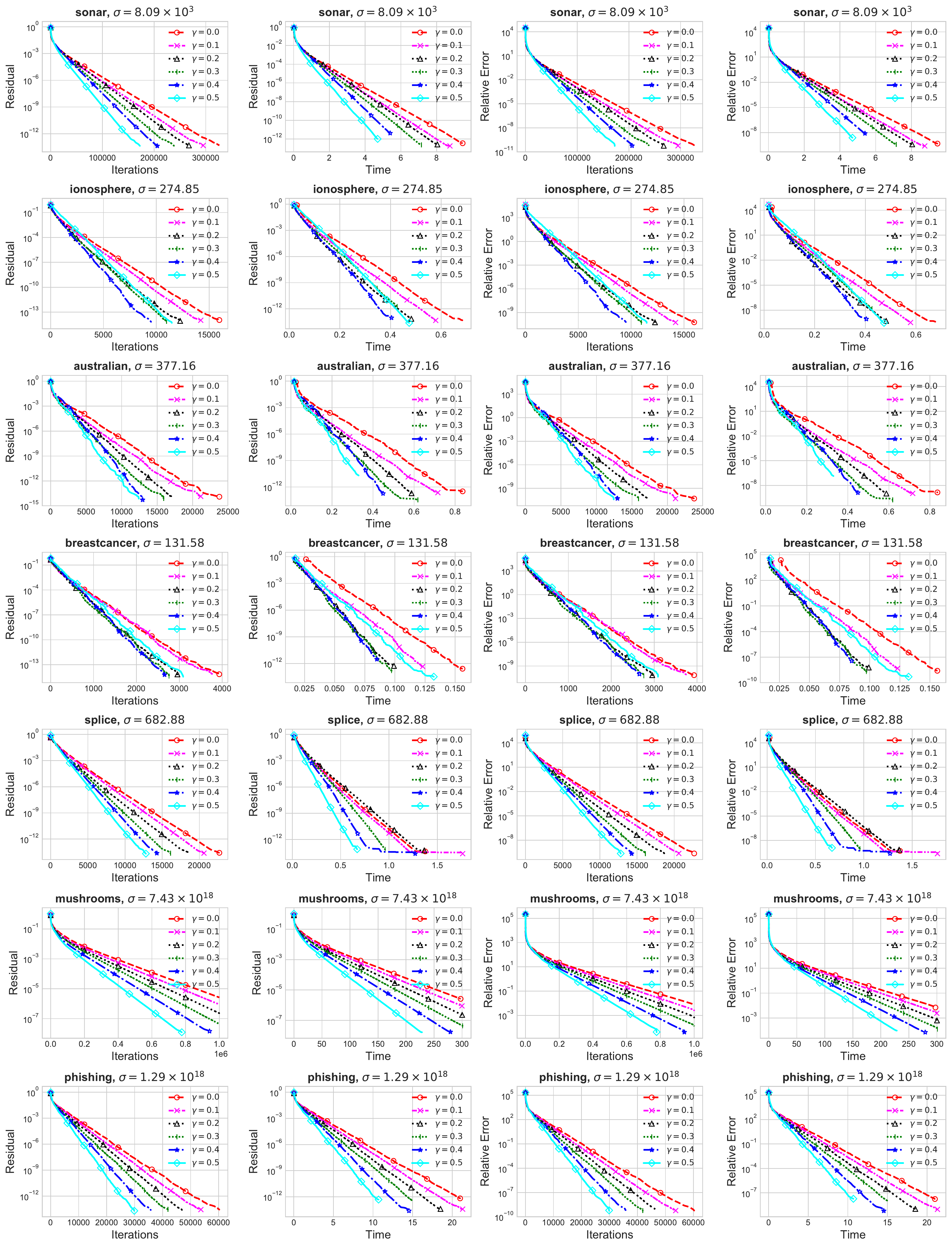}
    \caption{GK with momentum (sketch Sample size, $\tau = 1$): comparison among momentum variants on LIBSVM data, residual error and relative error vs CPU time and No. of iterations.}
    \label{fig:7}
\end{figure}

\newpage

\begin{figure}[ht!]
    \includegraphics[scale = 0.37]{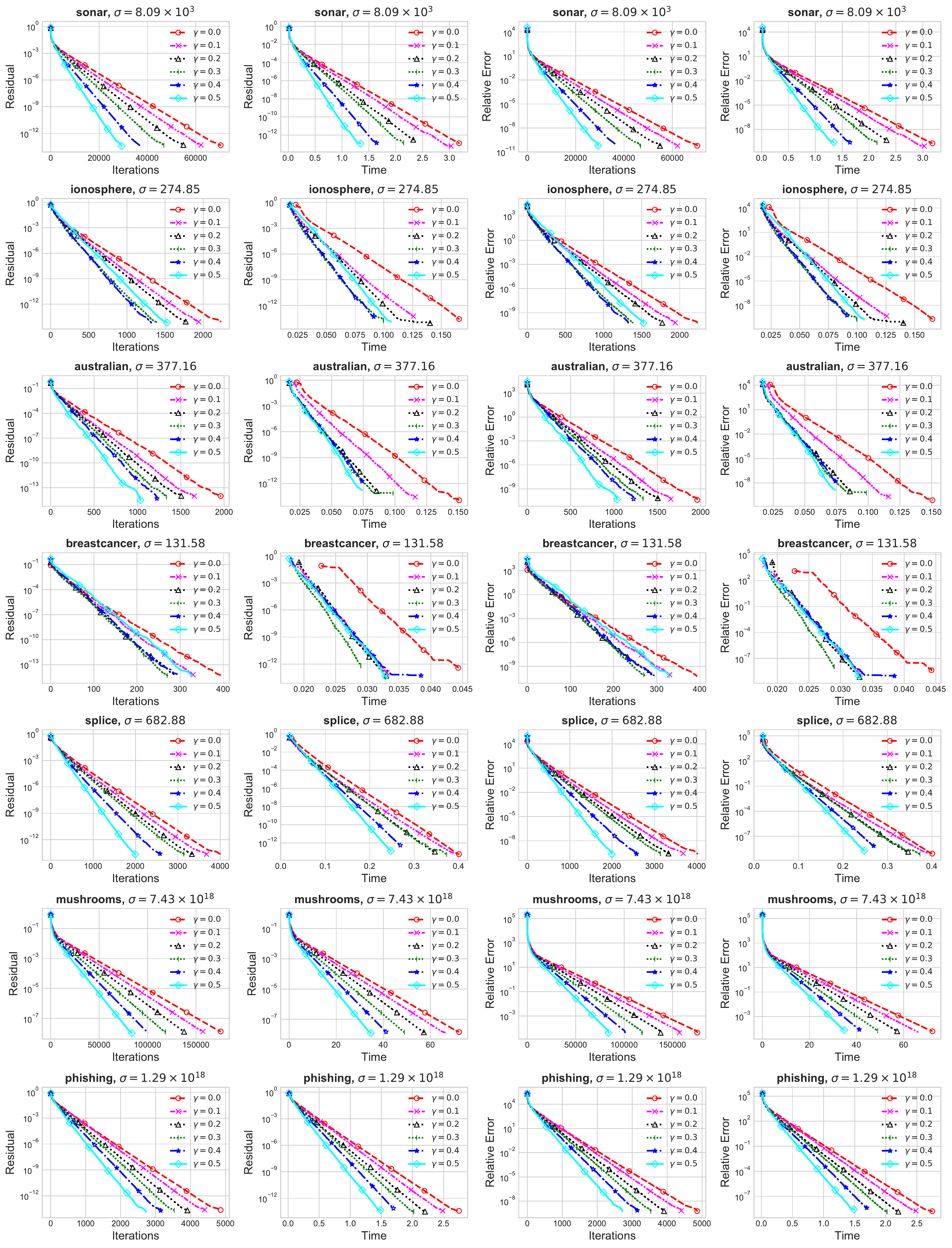}
    \caption{GK with momentum (sketch Sample size, $\tau = 20$): comparison among momentum variants on LIBSVM data,  residual error and relative error vs CPU time and No. of iterations.}
    \label{fig:8}
\end{figure}

\newpage

\begin{figure}[ht!]
    \includegraphics[scale = 0.37]{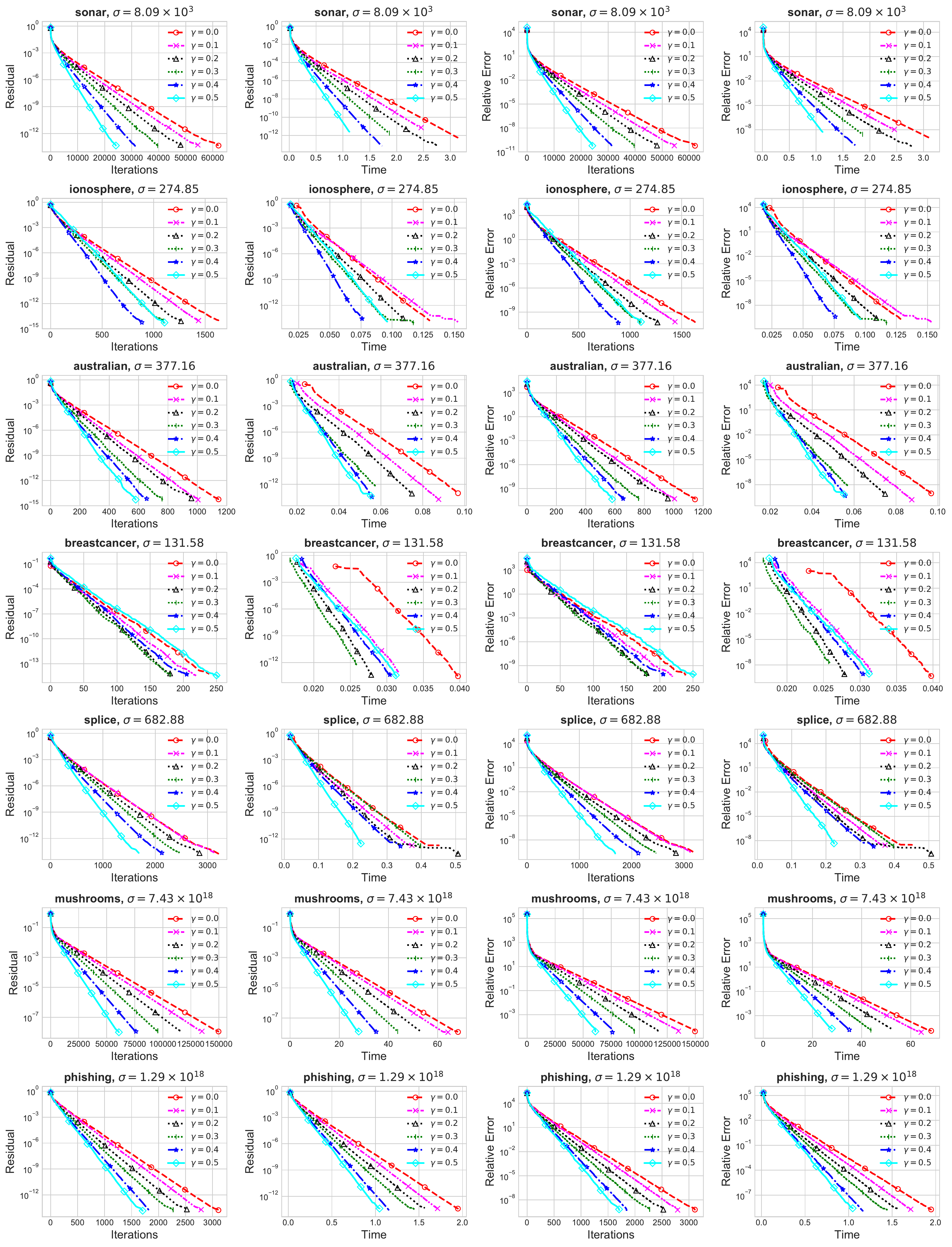}
    \caption{GK with momentum (sketch Sample size, $\tau = 50$): comparison among momentum variants on LIBSVM data, residual error and relative error vs CPU time and No. of iterations.}
    \label{fig:9}
\end{figure}

\newpage

\begin{figure}[ht!]
    \includegraphics[scale = 0.37]{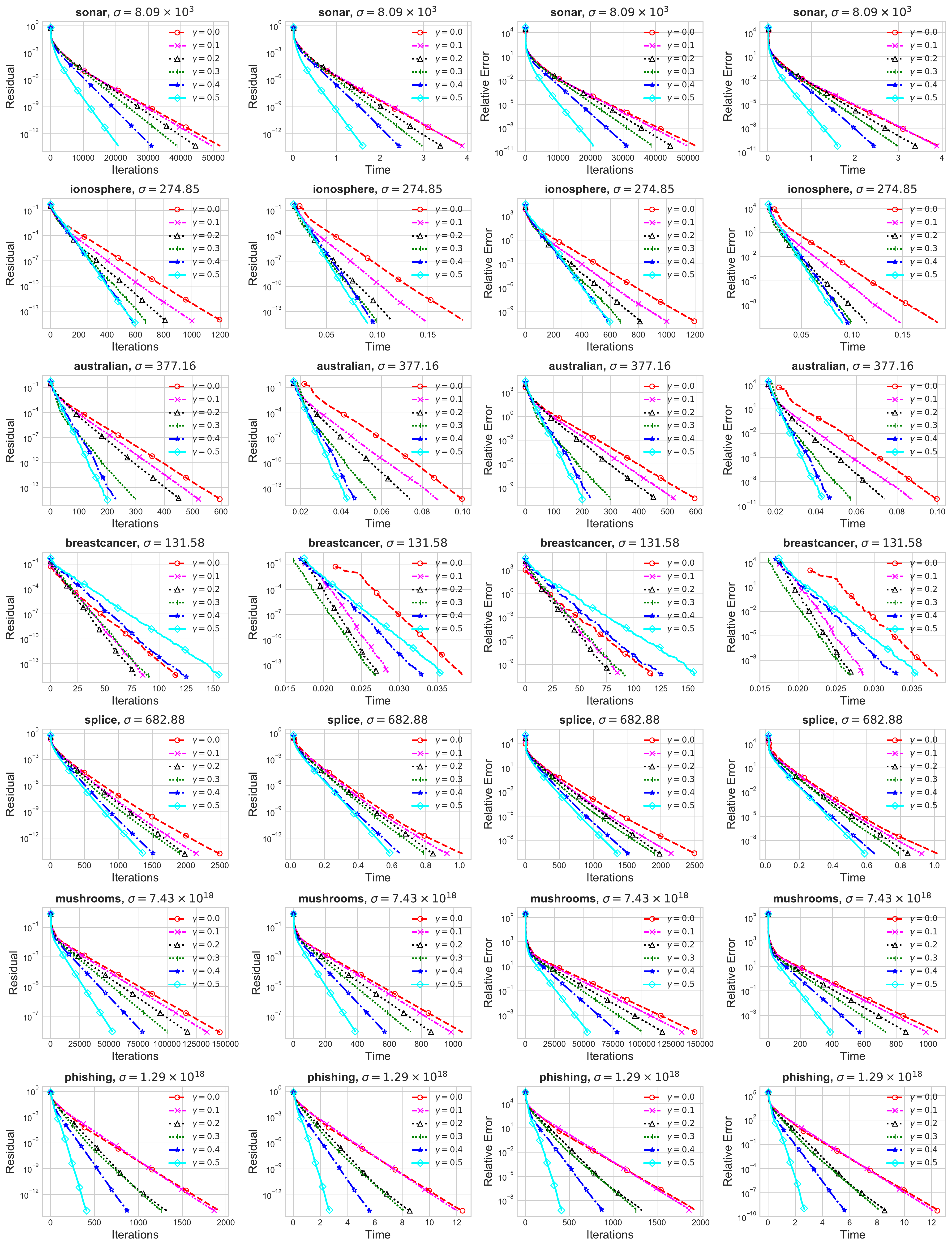}
    \caption{GK with momentum (sketch Sample size, $\tau = m$): comparison among momentum variants on LIBSVM data, residual error and relative error vs CPU time and No. of iterations.}
    \label{fig:10}
\end{figure}

\newpage

\begin{figure}[ht!]
    \includegraphics[scale = 0.37]{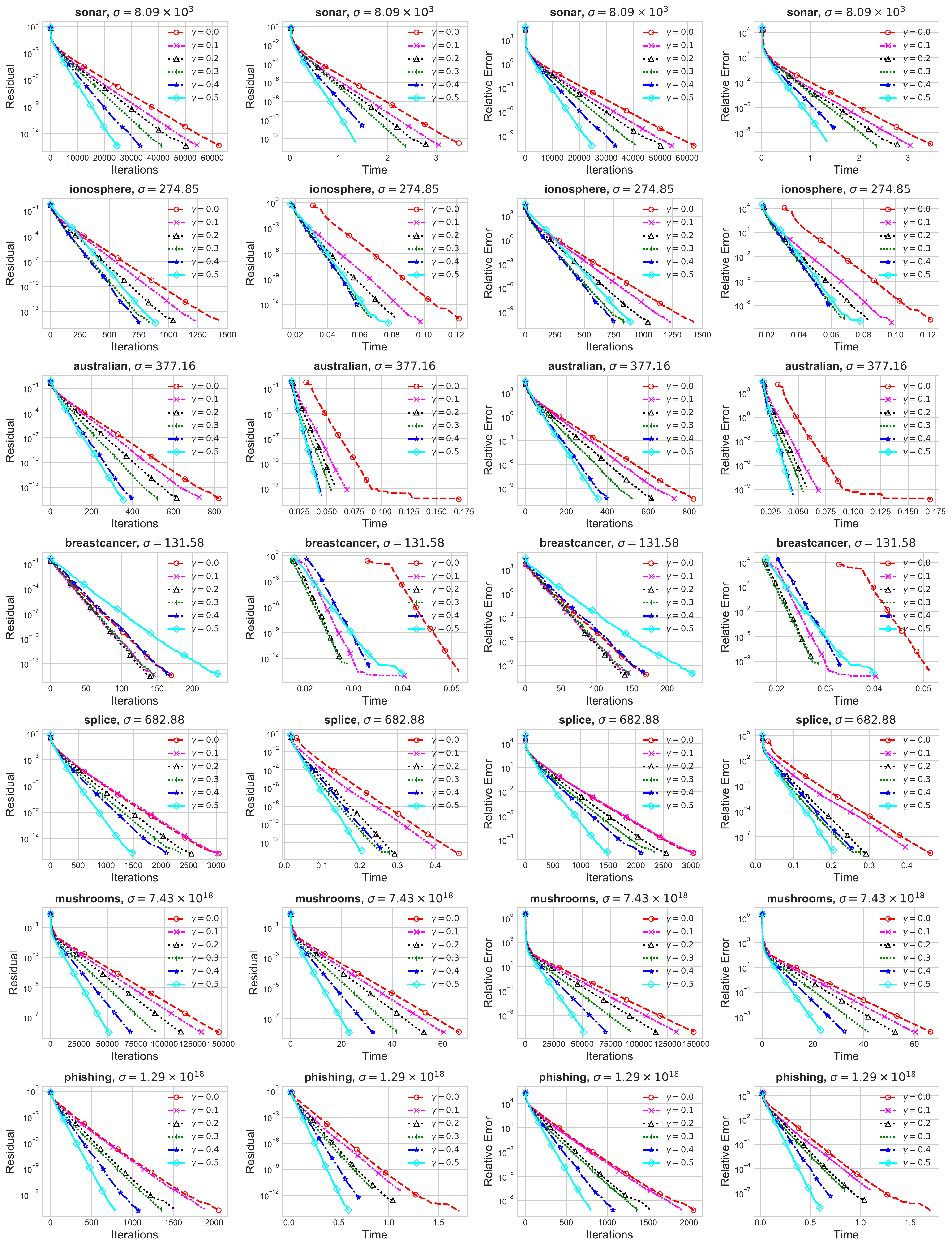}
    \caption{GK with momentum (capped, $\theta = 0.5, \tau_1 = 1, \tau_2 = m$): comparison among momentum variants on LIBSVM data, residual error and relative error vs CPU time and No. of iterations.}
    \label{fig:11}
\end{figure}

\newpage

\begin{figure}[ht!]
    \includegraphics[scale = 0.37]{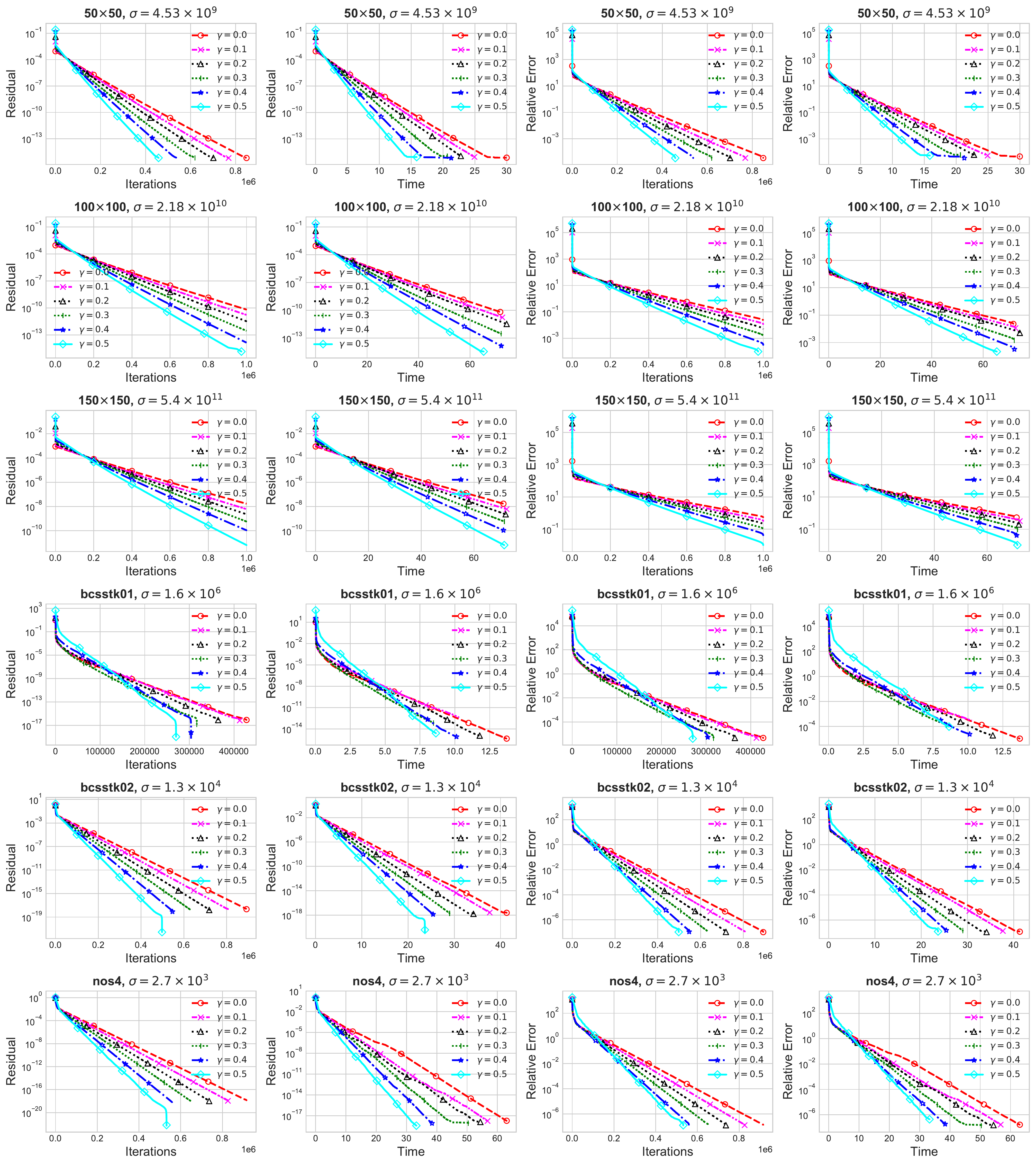}
    \caption{GCD with momentum (sketch Sample size, $\tau = 1$): comparison among momentum variants on Gaussian and Matrix market data, residual error and relative error vs CPU time and No. of iterations.}
    \label{fig:12}
\end{figure}

\newpage

\begin{figure}[htbp]
    \includegraphics[scale = 0.37]{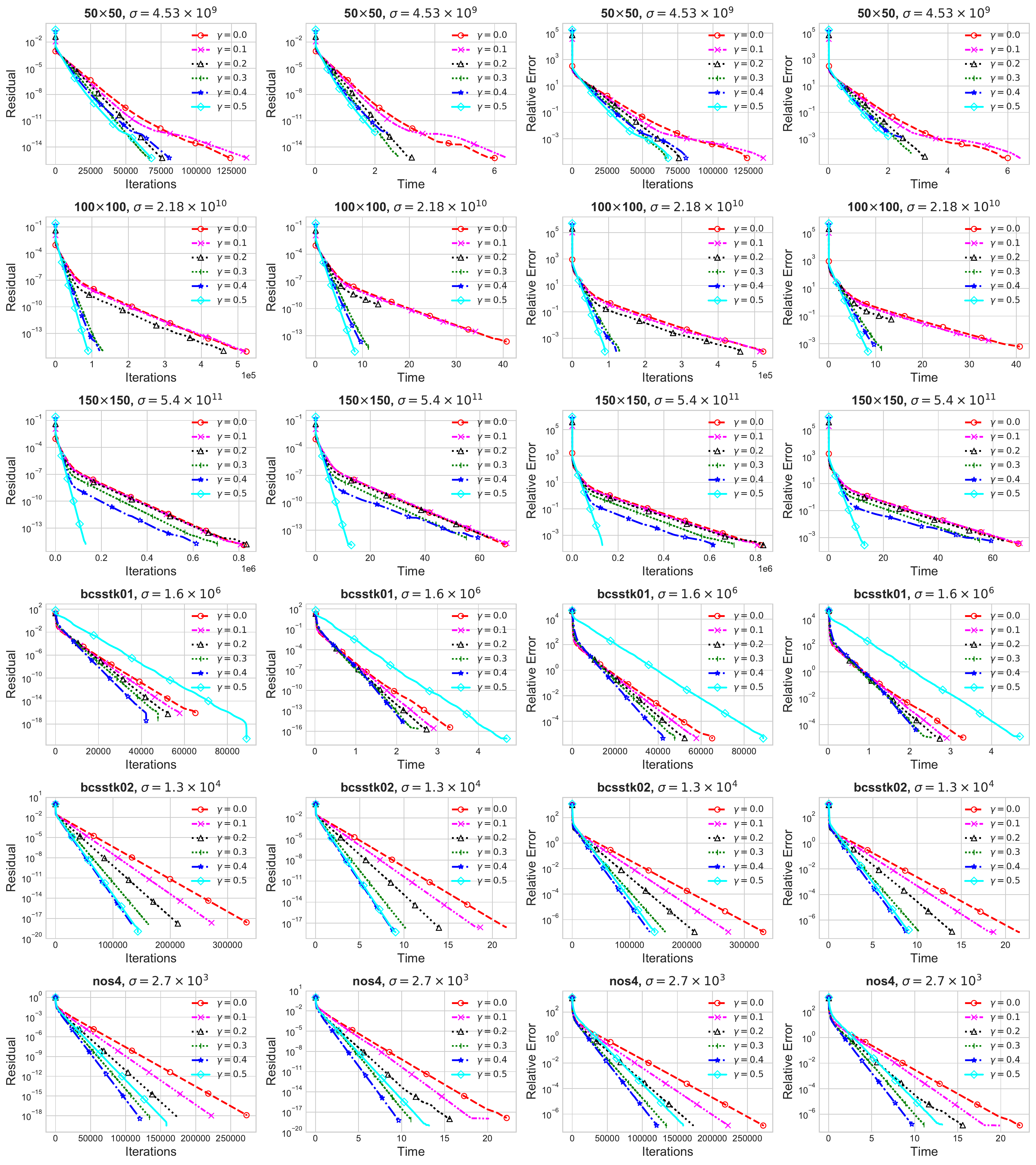}
    \caption{GCD with momentum (sketch Sample size, $\tau = 10$): comparison among momentum variants on Gaussian and Matrix market data, residual error and relative error vs CPU time and No. of iterations.}
    \label{fig:13}
\end{figure}

\newpage

\begin{figure}[ht!]
    \includegraphics[scale = 0.37]{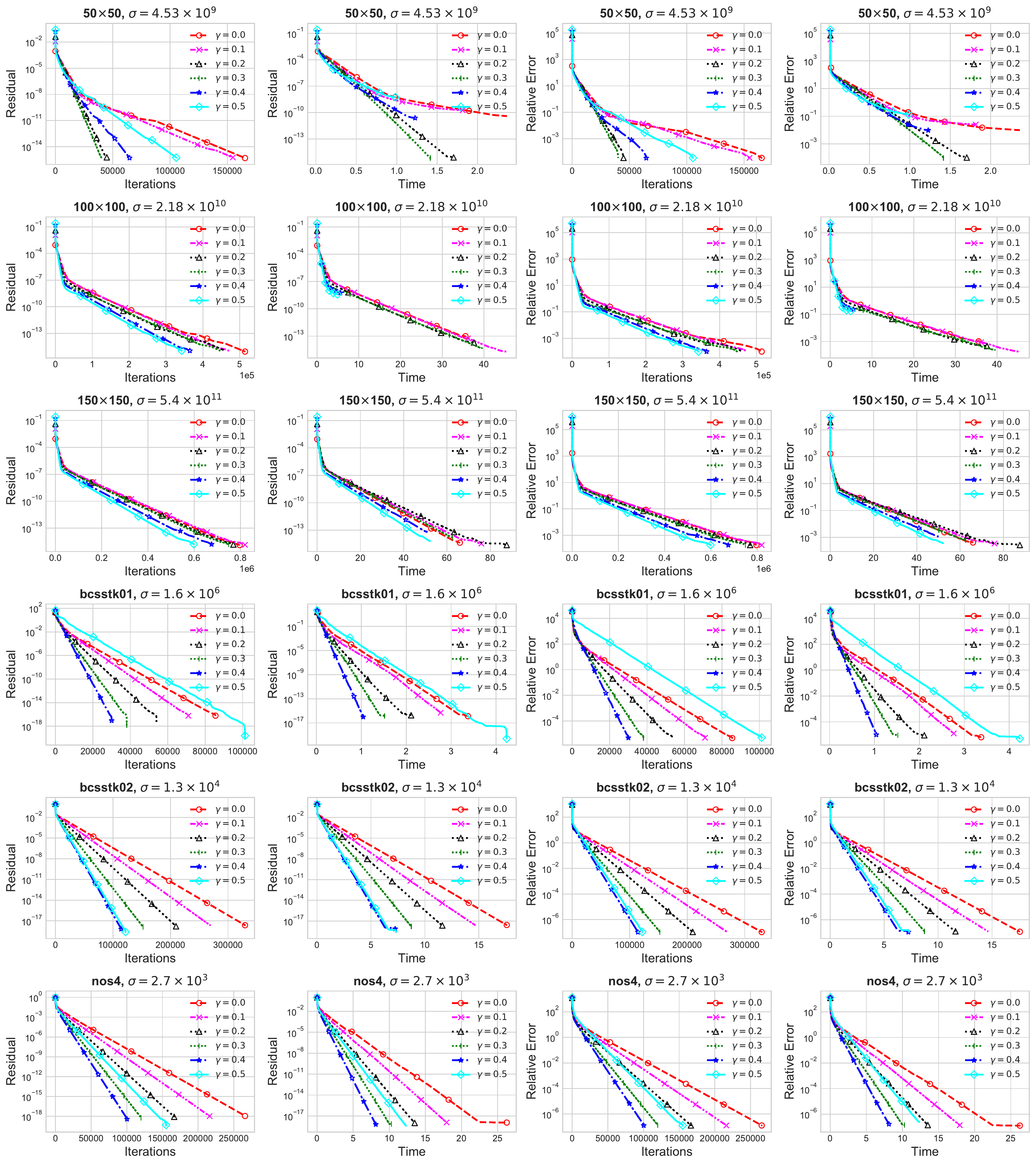}
    \caption{GCD with momentum (sketch Sample size, $\tau = 20$): comparison among momentum variants on Gaussian and Matrix market data, residual error and relative error vs CPU time and No. of iterations.}
    \label{fig:14}
\end{figure}

\newpage

\begin{figure}[ht!]
    \includegraphics[scale = 0.37]{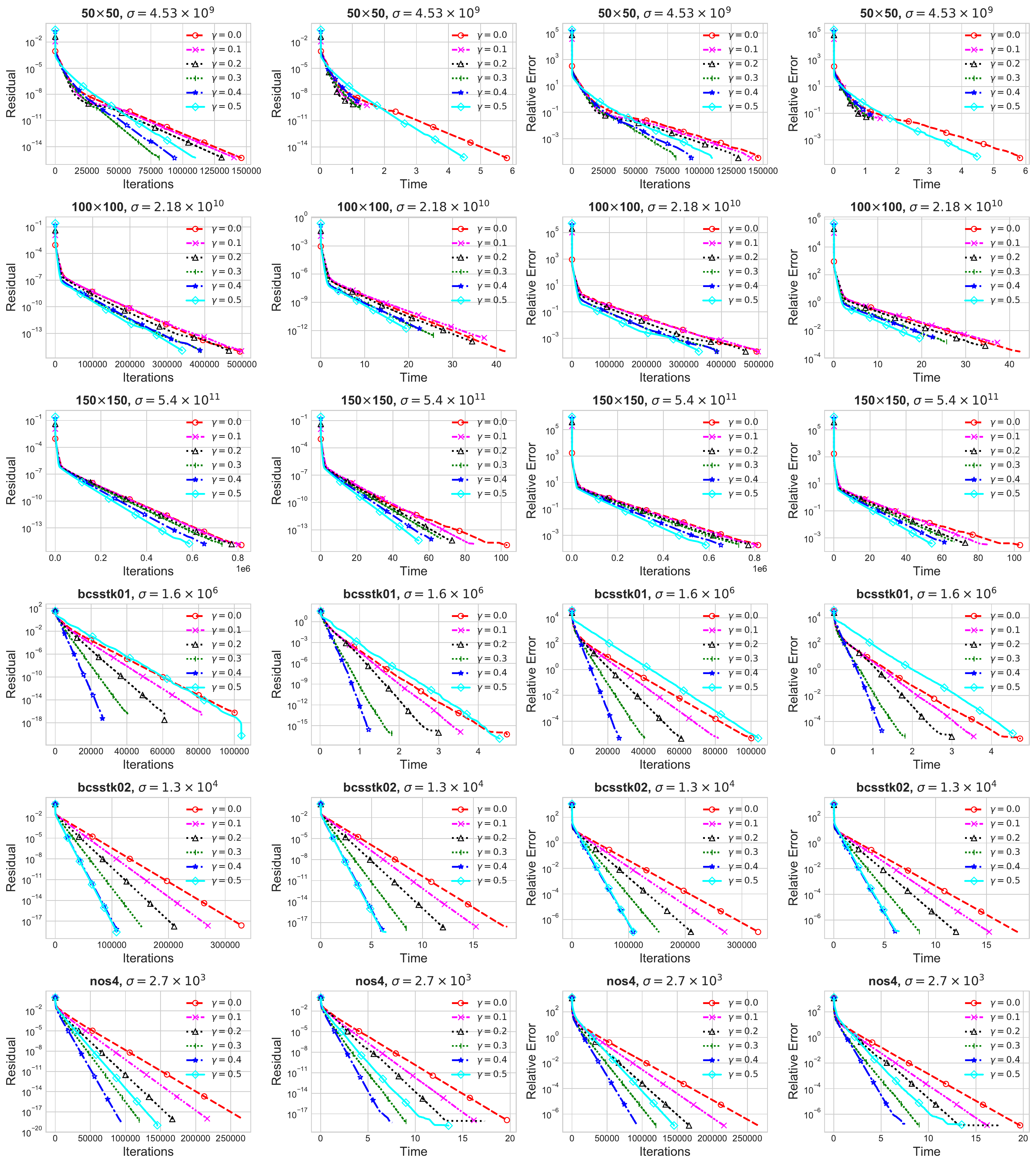}
    \caption{GCD with momentum (sketch Sample size, $\tau = 30$): comparison among momentum variants on Gaussian and Matrix market data, residual error and relative error vs CPU time and No. of iterations.}
    \label{fig:15}
\end{figure}

\newpage
\begin{figure}[ht!]
    \includegraphics[scale = 0.37]{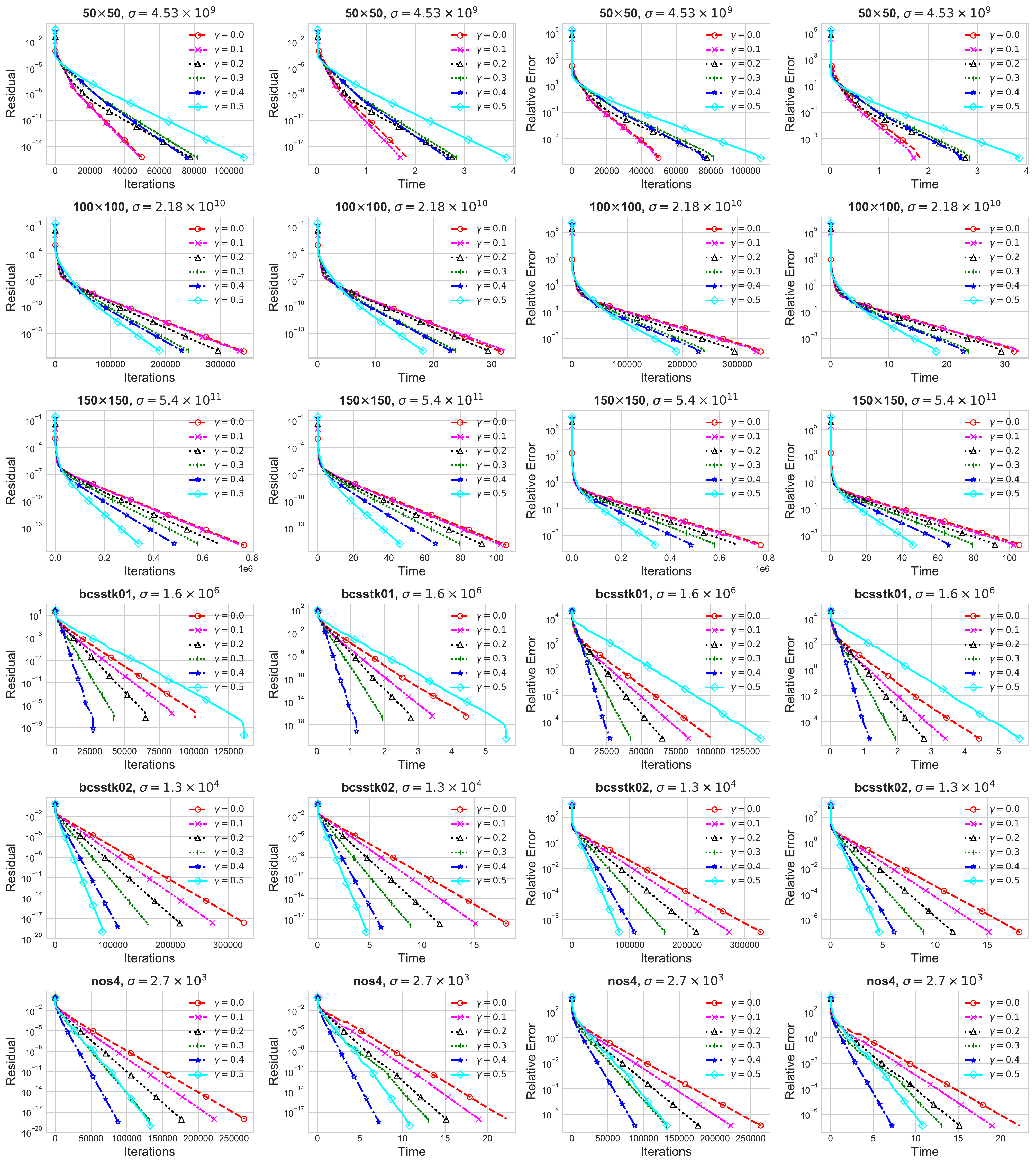}
    \caption{GCD with momentum (sketch Sample size, $\tau = m$): comparison among momentum variants on Gaussian and Matrix market data, residual error and relative error vs CPU time and No. of iterations.}
    \label{fig:16}
\end{figure}

\newpage

\begin{figure}[ht!]
    \includegraphics[scale = 0.37]{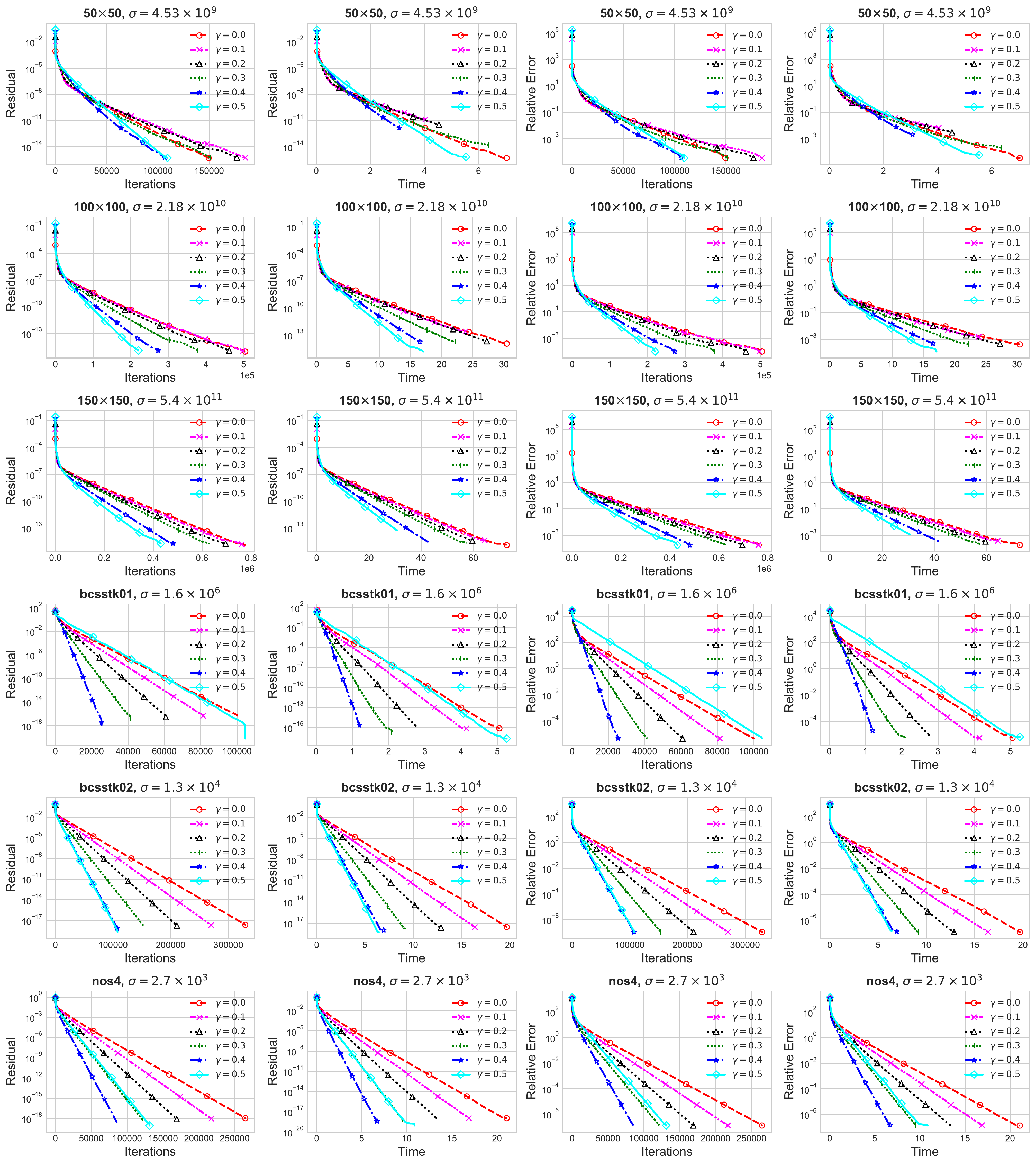}
    \caption{GCD with momentum (capped, $\theta = 0.5, \tau_1 = 1, \tau_2 = m$): comparison among momentum variants on Gaussian and Matrix market data, residual error and relative error vs CPU time and No. of iterations.}
    \label{fig:17}
\end{figure}

\newpage

\bibliography{sample}

\end{document}